\DeclareMathOperator{\appr}{app}
\DeclareMathOperator{\In}{in}
\DeclareMathOperator{\surf}{surf}
\DeclareMathOperator{\when}{when}
\DeclareMathOperator{\dX}{dX}
\DeclareMathOperator{\dz}{dz}
\DeclareMathOperator{\loc}{loc}
\DeclareMathOperator{\Or}{or}
\title{Rigorous derivation from the water waves equations of some full dispersion shallow water models}
\author{Louis Emerald}
\newtheorem{mydef}{Definition}[section]
\newtheorem{mypp}[mydef]{Proposition}
\newtheorem{mylem}[mydef]{Lemma}
\newtheorem{myrem}[mydef]{Remark}
\newtheorem{mynots}[mydef]{Notations}
\newtheorem{mynot}[mydef]{Notation}
\newtheorem{myhyp}[mydef]{Hypotheses}
\begin{document}
\numberwithin{equation}{section}

\maketitle
\begin{abstract}
    In order to improve the frequency dispersion effects of irrotational shallow water models in coastal oceanography, several full dispersion versions of classical models were formally derived in the literature. The idea, coming from G. Whitham in \cite{Whitham67}, was to modify them so that their dispersion relation is the same as the water waves equations. In this paper we construct new shallow water approximations of the velocity potential then deducing ones on the vertically averaged horizontal component of the velocity. We make use of them to derive rigorously from the water waves equations two new Hamiltonian full dispersion models. This provides for the first time non-trivial precision results characterizing the order of approximation of the full dispersion models. They are non-trivial in the sense that they are better than the ones for the corresponding classical models.
\end{abstract}





\section{Introduction}
\subsection{Motivations}

In this work, we consider full dispersion models for the propagation of surface waves in coastal oceanography. It is a class of irrotational shallow water models which have the particularity of having the same dispersion relation as the water waves equations. The first nonlinear full dispersion model appearing in the literature was introduced formally by Whitham in \cite{Whitham67, Whitham74}. It is a modification of the Korteweg–de Vries (KdV) equations called the Whitham equations, see \cite{KleinLinaresPilodEtAl18} for a rigorous comparison between those two equations. The goal here was to describe wave breaking phenomena \cite{Hur17} and Stokes waves of extreme amplitude \cite{EhrnstromWahlen19}. Later on, the same kind of formal modifications has been made on other standard shallow water models such as the Boussinesq or Green-Naghdi systems, thus creating a whole class of full dispersion models. The motivation was to widen the range of validity of the shallow water models, see section 5.3 of \cite{WWP}, and to study the propagation of waves above obstacles, a situation where there is creation of high harmonics which are then freely released, see \cite{BattjesBeji,Dingemans94,WWP}. 

The models obtained by modifying the Boussinesq system, generally called Whitham-Boussinesq systems, have been the subject of active research, see \cite{Carter18, DinvayDutykhKalisch19} for comparative studies, \cite{Dinvay19, Dinvay20, DinvaySelbergTesfahun19, KalischPilod18} for the well-posedness theory, \cite{DinvayNilsson19, NilssonWang19} for some works on solitary waves solutions, and \cite{HurPandey16} for a study on modulational instability (this list is not exhaustive, see also \cite{Pandey19, VargasMaganaPanayotaros16}). In the case of the modified Green-Nagdhi sytems, see \cite{DucheneIsrawiTalhouk16} for a fully justified two-layer one.

However at the best of the author's knowledge no direct derivation of these models from the water waves equations has been done. In this paper we provide asymptotic approximations of the Dirichlet-to-Neumann operator. Then we use them to derive two different Hamiltonian full dispersion systems (see \eqref{FirstFullDispersionModel(InPsi)} and \eqref{SecondFullDispersionModel(InVbar)}) and justify them in the sense of consistency (see definition \ref{Definition consistency}) of the water waves equations with these two models. Subsequently we deduce from them an improved precision result with respect to the one already known for the different full dispersion models appearing in the literature. 

\subsection{Consistency problem}
\noindent
Throughout this paper $d$ will be the dimension of the horizontal variable (denoted $X\in \mathbb{R}^d$).

The starting point of this study is the adimensional water waves problem, that is $(d=1,2)$
\begin{align}\label{WaterWavesEquations}
    \begin{cases}
        \partial_t \zeta - \frac{1}{\mu}\mathcal{G}^{\mu}\psi = 0, \\
        \partial_t \psi + \zeta + \frac{\epsilon}{2}|\nabla\psi|^2-\frac{\epsilon}{\mu}\frac{(\mathcal{G}^{\mu}\psi + \epsilon\mu\nabla\zeta\cdot\nabla\psi)^2}{2(1+\epsilon^2\mu|\nabla\zeta|^2)} = 0.
    \end{cases}
\end{align}
Here
\begin{itemize}
    \item $\nabla$ is the horizontal gradient, i.e.
    \begin{align*}
        \nabla \colonequals \begin{cases} \partial_x, \ \ \when \ \ d=1, \\ (\partial_x, \partial_y)^T, \ \ \when \ \ d=2. \end{cases}
    \end{align*}
    \item The free surface elevation is the graph of $\zeta$, which is a function of time $t$ and horizontal space $X\in\mathbb{R}^d$.
    \item $\psi(t,X)$ is the trace at the surface of the velocity potential. 
    \item $\mathcal{G}^{\mu}$ is the Dirichlet-to-Neumann operator defined later in definition \ref{Definitions Fondamentales}.
\end{itemize}
Moreover every variables and functions in \eqref{WaterWavesEquations} are compared with physical characteristic parameters of same dimension. Among those are the characteristic water depth $H_0$, the characteristic wave amplitude $a_{\surf}$ and the characteristic wavelength $L_x$. From these comparisons appear two adimensional parameters of main importance:
\begin{itemize}
    \item $\mu \colonequals \frac{H_0^{2}}{L_x^{2}}$: the shallow water parameter,
    \item $\epsilon \colonequals \frac{a_{\surf}}{H_0}$: the nonlinearity parameter.
\end{itemize}
We refer to \cite{WWP} for details on the derivation of these equations.
\indent 


Before giving the main definitions of this section, here are two assumptions maintained throughout this paper.
\begin{myhyp}
\begin{itemize} 
\item A fundamental hypothesis in this study will be the lower boundedness by a positive constant of the water depth (non-cavitation assumption):
\begin{align}\label{FundamentalHypothesis}
    \exists h_{\min}>0, \forall X\in \mathbb{R}^d, \ \ h \colonequals 1 + \epsilon \zeta(t,X) \geq h_{\min}.
\end{align}
\item We suppose that the bottom of the sea is flat. The water domain is then defined by $\Omega_t \colonequals  \{ (X,z)\in \mathbb{R}^{d+1}: -1 < z < \epsilon \zeta(X) \}$. 
\end{itemize}
\end{myhyp}




In what follows we need some notations on the functional setting of this paper.
\begin{mynots}\label{functional framework}
    \begin{itemize}
        \item For any $s \geq 0$ we will denote $H^s(\mathbb{R}^d)$ the Sobolev space of order $s$ in $L^2(\mathbb{R}^d)$.
        \item For any $s \geq 1$ we will denote $\dot{H}^s(\mathbb{R}^d) \colonequals \{ f \in L_{\loc}^{2}(\mathbb{R}^d), \ \ \nabla f \in H^{s-1}(\mathbb{R}^d) \}$ the Beppo-Levi space of order $s$.
        \item The $L^2(\mathbb{R}^d)$ norm will be written $|\cdot|_2$. The $L^2(\mathcal{S})$ norm, where $\mathcal{S}\colonequals \mathbb{R}^d \times(-1,0)$ (see definition \ref{TrivialDiffeomorphism}), will be denoted $||\cdot||_2$. 
        \item Denoting $\Lambda^s \colonequals (1-\Delta)^{s/2}$, where $\Delta$ is the Laplace operator in $\mathbb{R}^d$, the $H^s(\mathbb{R}^d)$ norm will be $|\cdot|_{H^s} \colonequals |\Lambda^{s}\cdot|_2$.
    \end{itemize}
\end{mynots}

It is easier to work in a time independant water domain. For that reason, by the mean of a diffeomorphism defined in the next definition, we will straighten our problem.

\begin{mydef}\label{TrivialDiffeomorphism}
    Let $\zeta \in H^{t_0 +1}(\mathbb{R}^d)$ $(t_0 > d/2)$ such that \eqref{FundamentalHypothesis} is satisfied. We define the time-dependant trivial diffeomorphism mapping the flat strip $\mathcal{S} \colonequals \mathbb{R}^d \times (-1,0)$ onto the water domain $\Omega_t$
    \begin{align}\label{TrivialDiffeomorphismEq}
    \begin{array}{ccccc}
    \Sigma_t & \colon & \mathcal{S} \colonequals \mathbb{R}^d\times (-1,0) & \to & \Omega_t \colonequals \{ (X,z)\in \mathbb{R}^{d+1}: -1 < z < \epsilon \zeta(X) \} \\
    & & (X,z) & \mapsto & (X, z + \epsilon \zeta (z+1)).\\
    \end{array}
\end{align}
\end{mydef}

\indent 

We can now define the Dirichlet-to-Neumann operator $\mathcal{G}$ in the flat strip $\mathcal{S}$, see the chapters 2 and 3 in \cite{WWP}.

\begin{mydef}\label{Definitions Fondamentales}
    Let $s \geq 0, t_0 > d/2$, $\psi \in \dot{{H}}^{s+3/2}(\mathbb{R}^d)$ and $\zeta \in H^{t_0+1}(\mathbb{R}^d)$ be such that \eqref{FundamentalHypothesis} is satisfied. Using the trivial diffeomorphism \eqref{TrivialDiffeomorphismEq} we introduce the potential velocity $\phi$ in the flat strip $\mathcal{S}$ through the following variable coefficients elliptic equation
    \begin{align}\label{StraightenedLaplaceProblem}
        \begin{cases} \nabla^{\mu}\cdot P(\Sigma_t) \nabla^{\mu} \phi = 0 \ \ \In \ \ \mathcal{S}, \\
        \phi|_{z=0} = \psi, \ \ \partial_z\phi|_{z=-1}=0, \end{cases}
    \end{align}
    where $\nabla^{\mu}$ is the $(d+1)$-gradient operator defined by $\nabla^{\mu} = (\sqrt{\mu}\nabla^T,\partial_z)^T$,\\ and $P(\Sigma_t) =  \begin{pmatrix} (1+\epsilon \zeta)I_d & -\sqrt{\mu}\epsilon(z+1)\nabla\zeta \\ -\sqrt{\mu}\epsilon(z+1)\nabla\zeta^{T} & \frac{1 + \mu \epsilon^2 (z+1)^2|\nabla\zeta|^2}{1+\epsilon\zeta} \end{pmatrix}$.\\
    Let's denote by $h$ the water depth, $h=1+\epsilon\zeta$. We define the vertically averaged horizontal velocity $\overline{V}[\epsilon\zeta]\psi$ (denoted $\overline{V}$ when no confusion is possible) by the formula 
    \begin{align}\label{VerticallyAveragedHorizontalVelocity}
        \overline{V} = \frac{1}{h} \int_{-1}^{0} [h\nabla\phi -\epsilon(z+1)\nabla \zeta\partial_z\phi] \dz.
    \end{align}
    The Dirichlet-to-Neumann operator $\mathcal{G}^{\mu}[\epsilon\zeta]$ (denoted $\mathcal{G}^{\mu}$ when no confusion is possible) is then defined as
    \begin{align}\label{Dirichlet-to-NeumannOperator}
    \begin{array}{ccccc}
    \mathcal{G}^{\mu} & \colon & \dot{H}^{s+3/2}(\mathbb{R}^d) & \to & H^{s+1/2}(\mathbb{R}^d)\\
    & & \psi & \mapsto & -\mu\nabla\cdot (h\overline{V}).\\
    \end{array}
    \end{align}
\end{mydef}

\indent 

Before stating the first result of this paper, we recall the definition of a Fourier multiplier.
\begin{mydef}
    Let $u:\mathbb{R}^d \to \mathbb{R}^d$ be a tempered distribution, let $\widehat{u}$ be its Fourier transform. Let $F:\mathbb{R}^d\to\mathbb{R}$ be a smooth function with polynomial decay. Then the Fourier multiplier associated with $F(\xi)$ is denoted $\mathrm{F}(D)$ (denoted $\mathrm{F}$ when no confusion is possible) and defined by the formula:
    \begin{align*}
        \widehat{\mathrm{F}(D)u}(\xi) = F(\xi)\widehat{u}(\xi).
    \end{align*}
\end{mydef}

We need also other notations. 
\begin{mynots}
    All the results of this paper will use the following notations, where $C(\circ)$ means a constant depending on $\circ$.\\
    Let $t_0 > d/2$, $s\geq 0$, and $\mu_{\max} > 0$. Given sufficiently regular $\zeta$ and $\psi$ satisfying hypothesis \eqref{FundamentalHypothesis} we will write
    \begin{itemize}
        \item $M_0 \colonequals C(\frac{1}{h_{\min}},\mu_{\max},|\zeta|_{H^{t_0}})$.
        \item $M(s) \colonequals C(M_0,|\zeta|_{H^{\max(t_0+1,s)}})$.
        \item $M \colonequals C(M_0,|\zeta|_{H^{t_0+2}})$.
        \item $N(s) \colonequals C(M(s),|\nabla\psi|_{H^s})$.
    \end{itemize}
\end{mynots}
\begin{myrem}\label{remark on t0}
In this paper, the notation $t_0$ is for a real number larger than $d/2$. However, it is not to be taken too large, we can consider $d/2 < t_0 \leq 2$. So that, when $s\geq 3$, $M(s)$ is in fact $M(s) \colonequals C(\frac{1}{h_{\min}},\mu_{\max},|\zeta|_{H^{s}})$. 
\end{myrem}
The first result of this paper provides asymptotic expansions of the vertically averaged horizontal velocity (which implies ones of the Dirichlet-to-Neumann operator) at order $O(\mu\epsilon)$ or $O(\mu^2\epsilon)$ with estimations of error. It also provides an approximation of the velocity potential at the surface expressed in terms of the vertically averaged horizontal velocity at order $O(\mu^2\epsilon)$.
\begin{mypp}\label{Proposition Approximation Vbar estimates}
    Let $s \geq 0$, and $\zeta \in H^{s+4}(\mathbb{R}^d)$ be such that \eqref{FundamentalHypothesis} is satisfied. Let $\psi \in \dot{H}^{s+5}(\mathbb{R}^d)$, and $\overline{V}$ be as in \eqref{VerticallyAveragedHorizontalVelocity}. Let also $\mathrm{F_1} \colonequals \frac{\tanh{(\sqrt{\mu}|D|)}}{\sqrt{\mu}|D|}$, $\mathrm{F_2} = \frac{3}{\mu|D|}(1- \frac{\tanh{(\sqrt{\mu}|D|)}}{\sqrt{\mu}|D|})$, and $\mathrm{F_3} = \mathrm{F_2}\circ\mathrm{F_1}^{-1}$ be three Fourier multipliers.\\
    The following estimates hold:
    \begin{align}\label{AsymptoticDevelopmentVbar}
        \begin{cases}
            |\overline{V}-\mathrm{F}_1\nabla\psi|_{H^{s}} \leq \mu\epsilon M(s+3) |\nabla\psi|_{H^{s+2}},\\
            |\overline{V}- \mathrm{F}_1\nabla\psi - \frac{\mu \epsilon}{3}[h \nabla\zeta \Delta\psi + \nabla(\zeta(1+h)\Delta\psi)]|_{H^s} \leq \mu^2\epsilon M(s+3)|\nabla\psi|_{H^{s+4}}, \\
            |\overline{V}-\nabla\psi - \frac{\mu}{3h}\nabla(h^3 \mathrm{F}_2 \Delta\psi)|_{H^s} \leq \mu^2\epsilon M(s+3)|\nabla\psi|_{H^{s+4}},\\
            |\overline{V}-\nabla\psi -\frac{\mu}{3h}\nabla(h^3 \mathrm{F}_3 \nabla \cdot \overline{V})|_{H^s} \leq \mu^2\epsilon M(s+4) |\nabla\psi|_{H^{s+4}}.
        \end{cases}
    \end{align}
\end{mypp}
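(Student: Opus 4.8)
\emph{Proof sketch.} The plan is to compare the exact solution $\phi$ of the elliptic problem \eqref{StraightenedLaplaceProblem} with the explicit ``flat'' profile
\[
\phi_{\mathrm{app}}\colonequals\frac{\cosh\!\bigl(\sqrt{\mu}(z+1)|D|\bigr)}{\cosh\!\bigl(\sqrt{\mu}|D|\bigr)}\,\psi,
\]
which solves $\mu\Delta\phi_{\mathrm{app}}+\partial_z^2\phi_{\mathrm{app}}=0$ in $\mathcal{S}$ with $\phi_{\mathrm{app}}|_{z=0}=\psi$, $\partial_z\phi_{\mathrm{app}}|_{z=-1}=0$, whose vertical average $\int_{-1}^0\phi_{\mathrm{app}}\,\dz$ equals exactly $\mathrm{F}_1\psi$, and which satisfies $\|\nabla\phi_{\mathrm{app}}\|_{H^s(\mathcal{S})}\lesssim|\nabla\psi|_{H^s}$ and $\|\partial_z\phi_{\mathrm{app}}\|_{H^s(\mathcal{S})}\lesssim\mu|\nabla\psi|_{H^{s+1}}$. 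Writing $P(\Sigma_t)=P_0+Q$ with $P_0=\mathrm{diag}(I_d,1)$, one has $\nabla^\mu\!\cdot(P_0\nabla^\mu\phi_{\mathrm{app}})=0$, so the defect $\mathfrak{f}\colonequals\nabla^\mu\!\cdot(P(\Sigma_t)\nabla^\mu\phi_{\mathrm{app}})=\nabla^\mu\!\cdot(Q\nabla^\mu\phi_{\mathrm{app}})$ is $O(\epsilon)$. The crucial observation is that the only term of $\mathfrak{f}$ not already carrying a power of $\mu$ involves $\partial_z^2\phi_{\mathrm{app}}$, which we rewrite as $-\mu\Delta\phi_{\mathrm{app}}$; after this substitution a direct computation gives $\|\mathfrak{f}\|_{H^{s'}(\mathcal{S})}\lesssim\mu\epsilon\,M(\ldots)\,|\nabla\psi|_{H^{s'+2}}$. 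The remainder $w\colonequals\phi-\phi_{\mathrm{app}}$ then solves $\nabla^\mu\!\cdot(P(\Sigma_t)\nabla^\mu w)=-\mathfrak{f}$ with $w|_{z=0}=0$ and $\partial_z w|_{z=-1}=0$.

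For the first estimate I would use the coercivity of $v\mapsto\int_{\mathcal{S}}P(\Sigma_t)\nabla^\mu v\cdot\nabla^\mu v$ and the $H^s$ elliptic estimates of \cite{WWP} to bound $\|\Lambda^s\nabla^\mu w\|_{L^2(\mathcal{S})}$ by $\|\mathfrak{f}\|_{H^s(\mathcal{S})}$; this already controls $\partial_z w$ and, through the Poincaré inequality $\|v\|_{L^2(\mathcal{S})}\le\|\partial_z v\|_{L^2(\mathcal{S})}$ (valid when $v|_{z=0}=0$), $w$ itself at order $\mu\epsilon$. The naive bound $\|\nabla w\|_{L^2(\mathcal{S})}\le\mu^{-1/2}\|\nabla^\mu w\|$ only gives $O(\sqrt{\mu}\,\epsilon)$; the missing power of $\mu$ is recovered by noting that $\nabla w$ also vanishes at $z=0$, so the same Poincaré inequality applied to $\nabla w$ gives $\|\nabla w\|_{H^s(\mathcal{S})}\le\|\partial_z\nabla w\|_{H^s(\mathcal{S})}$, the right-hand side being controlled at order $\mu\epsilon$ by the elliptic estimate with one extra horizontal derivative. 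Since $h$ does not depend on $z$, one integration by parts in $z$ in \eqref{VerticallyAveragedHorizontalVelocity} yields the identity $\overline{V}=\nabla\overline{\phi}-\frac{\epsilon\nabla\zeta}{h}(\psi-\overline{\phi})$ with $\overline{\phi}=\int_{-1}^0\phi\,\dz$, and subtracting the same identity for $\phi_{\mathrm{app}}$ gives $\overline{V}-\overline{V}_{\mathrm{app}}=\nabla\overline{w}+\frac{\epsilon\nabla\zeta}{h}\overline{w}=O(\mu\epsilon)$ by the above. As $\overline{\phi}_{\mathrm{app}}=\mathrm{F}_1\psi$, one has $\overline{V}_{\mathrm{app}}=\mathrm{F}_1\nabla\psi-\frac{\epsilon\nabla\zeta}{h}(1-\mathrm{F}_1)\psi$, and since $\|(1-\mathrm{F}_1)\psi\|_{H^s}\lesssim\mu|\nabla\psi|_{H^{s+1}}$ the first estimate follows.

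For the second estimate I would push the expansion of $\overline{w}$ one order further. At order $\mu\epsilon$ the interior equation for $w$ reduces to the ordinary differential equation $\partial_z\!\left(\frac1h\partial_z w_1\right)=-\mathfrak{f}_{\mathrm{lead}}$ in $z\in(-1,0)$ with $w_1|_{z=0}=0$, $\partial_z w_1|_{z=-1}=0$, where $\mathfrak{f}_{\mathrm{lead}}$ is $z$-independent; it is precisely here that the weight $1/h$ produces the factors of $h$ appearing in the statement, and a short computation identifies $\mathfrak{f}_{\mathrm{lead}}$ (a multiple of $\mu\epsilon\,\zeta\Delta\psi$ up to $O(\mu^2\epsilon)$) and then $\overline{w}_1$ as an explicit multiple of $\mu\epsilon\,\zeta(1+h)\Delta\psi$. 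Estimating $w-w_1$ by the same elliptic-plus-Poincaré argument — its defect being $O(\mu^2\epsilon)$ once the $h$-dependent terms are kept exact in $w_1$ — and inserting $(1-\mathrm{F}_1)\psi=-\frac{\mu}{3}\mathrm{F}_2\Delta\psi$ in $\overline{V}_{\mathrm{app}}$, a regrouping of the $\nabla\zeta\,\Delta\psi$ and $\zeta\nabla\Delta\psi$ contributions yields the second estimate. The third estimate then follows from the second by the identity $\frac{\mu}{3}\mathrm{F}_2\Delta\psi=(\mathrm{F}_1-1)\psi$ and the expansion of $\frac1h\nabla(h^3\,\cdot\,)$, the resulting error being $O(\mu^2\epsilon)$ thanks to $\|(\mathrm{F}_2-1)u\|_{H^s}\lesssim\mu|u|_{H^{s+2}}$ and $h=1+\epsilon\zeta$. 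Finally, in the third estimate one replaces $\Delta\psi$ by $\nabla\cdot\overline{V}$: the first estimate gives $\nabla\cdot\overline{V}=\mathrm{F}_1\Delta\psi+O(\mu\epsilon)$ in $H^s$, $\mathrm{F}_3=\mathrm{F}_2\circ\mathrm{F}_1^{-1}$ is bounded on $H^s$, hence $\mathrm{F}_3\nabla\cdot\overline{V}=\mathrm{F}_2\Delta\psi+O(\mu\epsilon)$ and $\frac{\mu}{3h}\nabla(h^3\mathrm{F}_3\nabla\cdot\overline{V})$ differs from $\frac{\mu}{3h}\nabla(h^3\mathrm{F}_2\Delta\psi)$ by $\mu\cdot O(\mu\epsilon)=O(\mu^2\epsilon)$, at the cost of one extra derivative on $\psi$ and on $\zeta$, which explains the $M(s+4)$ and $|\nabla\psi|_{H^{s+4}}$ in the last line of \eqref{AsymptoticDevelopmentVbar}.

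The step I expect to be the main obstacle is obtaining the \emph{sharp} power $\mu\epsilon$ (rather than $\sqrt{\mu}\,\epsilon$) in all four errors: this forces one both to trade vertical for horizontal derivatives in the defect via $\partial_z^2\phi_{\mathrm{app}}=-\mu\Delta\phi_{\mathrm{app}}$, and to exploit the vanishing of $\nabla w$ at the free surface through a Poincaré inequality instead of using the energy estimate bluntly. The secondary difficulty is the bookkeeping in the second estimate, where the exact $h$-dependence of the $O(\mu\epsilon)$ corrector must be retained so that the residual is genuinely $O(\mu^2\epsilon)$ and coincides with the stated form $\frac{\mu\epsilon}{3}\bigl[h\nabla\zeta\,\Delta\psi+\nabla(\zeta(1+h)\Delta\psi)\bigr]$.
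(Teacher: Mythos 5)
Your proposal is correct and follows essentially the same route as the paper: an explicit flat-strip candidate $\mathrm{F}_0\psi$ (plus its $\mu\epsilon$-corrector), a defect estimate gained by trading $\partial_z^2\mathrm{F}_0\psi=-\mu\Delta\mathrm{F}_0\psi$, the elliptic estimate of \cite{WWP} for the remainder, the Poincaré-in-$z$ trick on $\nabla w$ (which vanishes at $z=0$) to avoid losing $\mu^{-1/2}$, the exact identity $\overline{V}=\nabla\overline{\phi}-\tfrac{\epsilon\nabla\zeta}{h}(\psi-\overline{\phi})$ with $\overline{\mathrm{F}_0\psi}=\mathrm{F}_1\psi$, and the fourth estimate deduced from the third via the first together with the bounds $F_2(\sqrt{\mu}|\xi|)\lesssim(1+\mu|\xi|^2)^{-1}$ and $F_1^{-1}(\sqrt{\mu}|\xi|)\le 1+\sqrt{\mu}|\xi|$. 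The only cosmetic difference is that the paper obtains the third estimate directly from a second explicit candidate $\widetilde{\phi}_{\mathrm{app}}=\psi+h^2(\mathrm{F}_0-1)\psi$ rather than by algebraic regrouping from the second estimate, but the two derivations are interchangeable.
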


\begin{myrem}
   \begin{itemize}
        \item From \eqref{Dirichlet-to-NeumannOperator} we straightforwardly deduce corresponding estimates for the Dirichlet-to-Neumann operator which we omit to write down since we do not use them in our analysis.
        \item In fact we obtain estimates on the straightened velocity potential inside the fluid which would allow us to reconstruct the velocity field at precision $O(\mu^2\epsilon)$. Let $t_0 > d/2$, and $\phi$ be the solution of \eqref{StraightenedLaplaceProblem}. Let also $\mathrm{F}_0 \colonequals \frac{\cosh{((z+1)\sqrt{\mu}|D|)}}{\cosh{(\sqrt{\mu}|D|)}}$ be a Fourier multiplier depending on the transversal variable $z\in [-1,0]$. Then one has
        \begin{align*}
            \begin{cases}
                ||\Lambda^s\nabla^{\mu}(\phi-\mathrm{F}_0\psi - \mu\epsilon\zeta(1+h)(\frac{z^2}{2}+z)\Delta\psi) ||_2 \leq \mu^2\epsilon M(s+2)|\nabla\psi|_{H^{s+3}},\\
                ||\Lambda^s\nabla^{\mu}(\phi-(\psi+h^2(\mathrm{F}_0-1)\psi) ||_2 \leq \mu^2\epsilon M(s+2) |\nabla\psi|_{H^{s+3}}.
            \end{cases}
        \end{align*}
   \end{itemize}
\end{myrem}

\indent

To state the second result of this paper we need to define the notion of consistency of the water waves equations \eqref{WaterWavesEquations} with respect to a given model in the shallow water asymptotic regime at a certain order in $\mu$ and $\epsilon$.

\begin{mydef}\label{Definition consistency}(Consistency) \\
    Let $\mu_{\max} > 0$. Let $\mathcal{A} \subset \{(\epsilon,\mu), \ \ 0 \leq \epsilon \leq 1, \ \ 0\leq \mu\leq \mu_{\max} \}$ be the shallow water asymptotic regime. We denote by $(A)$ and $(A')$ two asymptotic models of the following form:
    \begin{align*}
        (A) \ \ \begin{cases}
                    \partial_t \zeta + \mathcal{N}_{(A)}^1(\zeta,\psi) = 0, \\
                    \partial_t \psi + \mathcal{N}_{(A)}^2(\zeta,\psi) = 0,
                \end{cases}
                , \ \ 
        (A') \ \ \begin{cases}
                    \partial_t \zeta + \nabla\cdot(h\overline{V}) = 0,\\
                    \partial_t ((Id + \mu T_{(A')}[h]) \overline{V}) + \mathcal{N}^3_{(A')}(\zeta,\overline{V}) = 0,
                 \end{cases}
    \end{align*}
    where $\mathcal{N}_{(A)}^1$, $\mathcal{N}_{(A)}^2$ and $\mathcal{N}^3_{(A')}$ are nonlinear operators that depend respectively on the asymptotic model $(A), (A)$ and $(A')$. And $T_{(A')}$ is an operator nonlinear in $h$ and linear in $\overline{V}$ which depends on the asymptotic model $(A')$. \\
    We say that the water waves equations are consistent at order $O(\mu^k\epsilon^l)$ with respectively $(A)$ or $(A')$ in the regime $\mathcal{A}$ if there exists $n \in \mathbb{N}$ and $T >0$ such that for all $s\geq0$ and $p=(\epsilon,\mu)\in\mathcal{A}$, and for every solution $(\zeta,\psi) \in C([0,\frac{T}{\epsilon}];H^{s+n}\times \dot{H}^{s+n+1})$ to the water waves equations \eqref{WaterWavesEquations} one has respectively
        \begin{align*}
            \begin{cases}
                \partial_t \zeta + \mathcal{N}_{(A)}^1(\zeta,\psi) = \mu^k \epsilon^l R_1, \\
                \partial_t \psi + \mathcal{N}_{(A)}^2(\zeta,\psi) = \mu^k \epsilon^l R_2,
                \end{cases}
                        \ \ \Or \ \ 
                \begin{cases}
                    \partial_t \zeta + \nabla\cdot(h\overline{V}) = 0, \\
                    \partial_t ((Id + \mu T_{(A')}[h]) \overline{V}) + \mathcal{N}_{(A')}^3(\zeta,\overline{V}) = \mu^k\epsilon^l R_3,
                \end{cases}
        \end{align*}
        with respectively $|R_1|_{H^s},|R_2|_{H^s}\leq N(s+n)$ or $|R_3|_{H^{s}}\leq N(s+n+1)$ on $[0,\frac{T}{\epsilon}]$. 
\end{mydef}
\noindent For sufficiently regular initial data satisfying hypotheses \ref{FundamentalHypothesis}, the existence and uniqueness of a solution of the water waves equations with existence time of order $1/\epsilon$ independent of $\mu$ and with the regularity we want is given by the theorem 4.16 in \cite{WWP}.

We now state our consistency results. 
\begin{mypp}\label{Proposition consistency first full dispersion GN system}
    Let $\mathrm{F}_1$ and $\mathrm{F}_2$ be the Fourier multipliers defined in proposition \ref{Proposition Approximation Vbar estimates}. The water waves equations \eqref{WaterWavesEquations} are consistent at order $O(\mu^2\epsilon)$ in the shallow water regime $\mathcal{A}$ (see definition \ref{Definition consistency}) with
    \begin{align}\label{FirstFullDispersionModel(InPsi)}
        \begin{cases} \partial_t \zeta +\nabla\cdot(h\nabla\psi) + \frac{\mu}{6}\big(\Delta(\mathrm{F}_2[h^3\Delta\psi]) + \Delta(h^3\mathrm{F}_2[\Delta\psi])\big)=0, \\
        \partial_t \psi  +\zeta + \frac{\epsilon}{2}|\nabla\psi|^2 -\frac{\mu\epsilon}{2}h^2(\mathrm{F}_2[\Delta\psi])\Delta\psi=0,
        \end{cases}
    \end{align}
with $n=4$.
\end{mypp}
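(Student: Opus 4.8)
\emph{Proof sketch.} The plan is to insert the asymptotic expansions of Proposition~\ref{Proposition Approximation Vbar estimates} into the water waves system \eqref{WaterWavesEquations}, equation by equation, and to check that every discrepancy with \eqref{FirstFullDispersionModel(InPsi)} is a remainder of size $O(\mu^2\epsilon)$ controlled in $H^s$ by $N(s+4)$. For the first equation, the definition \eqref{Dirichlet-to-NeumannOperator} gives $\frac1\mu\mathcal G^\mu\psi=-\nabla\cdot(h\overline V)$, so the first water waves equation is exactly $\partial_t\zeta+\nabla\cdot(h\overline V)=0$. Substituting the third estimate of \eqref{AsymptoticDevelopmentVbar}, multiplying by $h$ and taking the divergence, one gets $\partial_t\zeta+\nabla\cdot(h\nabla\psi)+\frac{\mu}{3}\Delta\big(h^3\mathrm F_2[\Delta\psi]\big)=\mu^2\epsilon R$, the remainder being $-\mu^2\epsilon\nabla\cdot(h\,r)$ with $r$ the error in \eqref{AsymptoticDevelopmentVbar}, and thus of size $O(\mu^2\epsilon)$ in $H^s$ with $|\nabla\psi|_{H^{s+4}}$ on the right (the same elliptic estimates that prove Proposition~\ref{Proposition Approximation Vbar estimates}, applied to $\mathcal G^\mu$ itself as in the remark following it, keep the derivative count optimal). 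It then only remains to symmetrize $\frac{\mu}{3}\Delta(h^3\mathrm F_2[\Delta\psi])$ into $\frac{\mu}{6}\big(\Delta\mathrm F_2[h^3\Delta\psi]+\Delta(h^3\mathrm F_2[\Delta\psi])\big)$, the error being $\frac{\mu}{6}\Delta\big([h^3,\mathrm F_2]\Delta\psi\big)$, where $[h^3,\mathrm F_2]:=h^3\mathrm F_2-\mathrm F_2 h^3$.

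The crucial observation is that $\mathrm F_2=\frac{3}{\mu|D|^2}(1-\mathrm F_1)$ can be written $\mathrm F_2=1+\mu|D|^2G(\sqrt\mu D)$ with $G$ a smooth, bounded, even symbol, because $1-\mathrm F_1(\xi)=\frac{\mu|\xi|^2}{3}+O(\mu^2|\xi|^4)$ near $\xi=0$ and $\mathrm F_1\to0$ at infinity. Hence $[h^3,\mathrm F_2]=[h^3-1,\mathrm F_2]=\mu\,[h^3,|D|^2G(\sqrt\mu D)]$, and a direct computation gives $[h^3,|D|^2G(\sqrt\mu D)]f=(\Delta h^3)\,G(\sqrt\mu D)f+2\nabla h^3\cdot G(\sqrt\mu D)\nabla f$; since $h^3-1=O(\epsilon)$ and the operators $G(\sqrt\mu D)$ and $G(\sqrt\mu D)\nabla$ are bounded on Sobolev spaces uniformly in $\mu\le\mu_{\max}$, this commutator maps $H^{s+1}\to H^s$ with norm $O(\mu\epsilon)$, so $\frac{\mu}{6}\Delta([h^3,\mathrm F_2]\Delta\psi)$ is indeed $O(\mu^2\epsilon)$ in $H^s$ with $|\nabla\psi|_{H^{s+4}}$ on the right — which is what pins down $n=4$. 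For the second equation, the estimates of \eqref{AsymptoticDevelopmentVbar} give $\mathcal G^\mu\psi=-\mu\nabla\cdot(h\nabla\psi)+O(\mu^2)$, hence, using $\nabla\cdot(h\nabla\psi)-\epsilon\nabla\zeta\cdot\nabla\psi=h\Delta\psi$, the combination occurring in \eqref{WaterWavesEquations} satisfies $\mathcal G^\mu\psi+\epsilon\mu\nabla\zeta\cdot\nabla\psi=-\mu h\Delta\psi+\mu^2\tilde q$ with $\tilde q$ of order $4$ in $\psi$. Squaring, dividing by $2(1+\epsilon^2\mu|\nabla\zeta|^2)$ (whose nonconstant part is $O(\epsilon^2\mu)$) and multiplying by $\frac\epsilon\mu$ produces $\frac{\mu\epsilon}{2}h^2(\Delta\psi)^2+O(\mu^2\epsilon)$, and finally $\frac{\mu\epsilon}{2}h^2(\Delta\psi)^2-\frac{\mu\epsilon}{2}h^2(\mathrm F_2[\Delta\psi])\Delta\psi=-\frac{\mu\epsilon}{2}h^2\big((\mathrm F_2-1)[\Delta\psi]\big)\Delta\psi=-\frac{\mu^2\epsilon}{2}h^2\big(|D|^2G(\sqrt\mu D)[\Delta\psi]\big)\Delta\psi$, which is $O(\mu^2\epsilon)$ in $H^s$ since $\mu^{-1}(\mathrm F_2-1)=|D|^2G(\sqrt\mu D)$ is bounded $H^{s+2}\to H^s$ uniformly in $\mu$. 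Collecting the two computations, with all $M$-constants and regularity requirements fitting under $N(s+4)$ for $\zeta\in H^{s+4}$, $\psi\in\dot H^{s+5}$, gives the claimed consistency.

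I expect the main obstacle to be the commutator estimate underlying the symmetrization in the first equation: the point is that $\mathrm F_2-1$ carries a genuine factor of $\mu$ (it equals $\mu|D|^2$ times a uniformly bounded multiplier, not merely a bounded multiplier), which is exactly what upgrades the naive $O(\mu\epsilon)$ bound for $\frac{\mu}{6}\Delta([h^3,\mathrm F_2]\Delta\psi)$ to the required $O(\mu^2\epsilon)$; relatedly, one must organize the divergence in the first equation and track all derivative losses carefully so that the residuals stay within $N(s+4)$ rather than forcing a larger $n$. The same structural identity $\mathrm F_2-1=\mu|D|^2G(\sqrt\mu D)$ is also what makes the quadratic correction in the second equation enter only at order $O(\mu^2\epsilon)$.
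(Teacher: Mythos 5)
Your proposal follows essentially the same route as the paper: for the mass equation, the exact identity $\partial_t\zeta+\nabla\cdot(h\overline V)=0$ plus the substitution of $\widetilde{\overline V}_{\mathrm{app}}=\nabla\psi+\frac{\mu}{3h}\nabla(h^3\mathrm F_2\Delta\psi)$ and a symmetrization step; for the Bernoulli equation, discarding the $O(\mu\epsilon^2)$ denominator, replacing $\overline V$ by $\nabla\psi$ at cost $O(\mu)$ inside a $\mu\epsilon$-prefactored term, using $-\nabla\cdot(h\nabla\psi)+\epsilon\nabla\zeta\cdot\nabla\psi=-h\Delta\psi$, and trading $(\Delta\psi)^2$ for $\mathrm F_2[\Delta\psi]\Delta\psi$ via $\mathrm F_2-1=O(\mu)$.

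One step is stated incorrectly, although the estimate it is meant to justify is true. The ``direct computation'' $[h^3,|D|^2G(\sqrt\mu D)]f=(\Delta h^3)G(\sqrt\mu D)f+2\nabla h^3\cdot G(\sqrt\mu D)\nabla f$ is false: since $G(\sqrt\mu D)$ does not commute with multiplication by $h^3$, one has
\begin{align*}
[h^3,|D|^2G(\sqrt\mu D)]=[h^3,|D|^2]\,G(\sqrt\mu D)+|D|^2\,[h^3,G(\sqrt\mu D)],
\end{align*}
and you have dropped the second term, which is of the same size as the ones you kept. Fortunately no cancellation is needed here: writing $[h^3,\mathrm F_2]f=[h^3-1,\mathrm F_2-1]f=(h^3-1)(\mathrm F_2-1)f-(\mathrm F_2-1)\big[(h^3-1)f\big]$ and bounding the two terms separately, using $|(\mathrm F_2-1)u|_{H^s}\lesssim\mu|\nabla u|_{H^{s+1}}$ (proposition \ref{FourierMultiplierApproximations}) and $|h^3-1|_{H^{s+4}}\le\epsilon M(s+4)$, already gives the required $O(\mu\epsilon)$ bound for the commutator; this is exactly what the paper does, and you should replace your Leibniz-type identity by this term-by-term estimate. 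A last bookkeeping remark: passing the divergence through $h(\overline V-\widetilde{\overline V}_{\mathrm{app}})$ costs one derivative, so the natural bound on the first remainder involves $|\nabla\psi|_{H^{s+5}}$ (as in the paper's Step~1) rather than $|\nabla\psi|_{H^{s+4}}$; your appeal to a direct estimate on the Dirichlet-to-Neumann operator to avoid this loss is plausible but not carried out.
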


\begin{mypp}\label{Proposition consistency second Full dispersion Gn System}
    Let $\mathrm{F}_3$ be the Fourier multiplier defined in proposition \ref{Proposition Approximation Vbar estimates}. Let $T[h]V \colonequals -\frac{1}{6h}(\nabla(h^3\mathrm{F}_3[\nabla \cdot V]) + \nabla(\mathrm{F}_3[h^3\nabla \cdot V]))$. The water waves equations are consistent at order $O(\mu^2\epsilon)$ in the shallow water regime $\mathcal{A}$ with
    \begin{align}\label{SecondFullDispersionModel(InVbar)}
    \begin{cases}
        \partial_t \zeta + \nabla \cdot(h\overline{V})=0, \\
        \partial_t((Id+\mu T[h])\overline{V}) +\nabla \zeta &+\frac{\epsilon}{2} \nabla(|\overline{V}|^2) - \frac{\mu\epsilon}{6}\nabla(\frac{1}{h}\overline{V}\cdot \nabla(h^3\mathrm{F}_3[\nabla\cdot\overline{V}]+\mathrm{F}_3[h^3\nabla\cdot\overline{V}])) \\
        &- \frac{\mu\epsilon}{2}\nabla(h^2\mathrm{F}_3[\nabla\cdot\overline{V}]\nabla\cdot\overline{V}) =0.
    \end{cases}
\end{align}
with $n=6$.
\end{mypp}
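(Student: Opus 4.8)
The plan is to use the fourth estimate in Proposition~\ref{Proposition Approximation Vbar estimates}, namely
\[
\big|\overline{V}-\nabla\psi -\tfrac{\mu}{3h}\nabla(h^3 \mathrm{F}_3 \nabla \cdot \overline{V})\big|_{H^s} \leq \mu^2\epsilon M(s+4)|\nabla\psi|_{H^{s+4}},
\]
together with the water waves equations \eqref{WaterWavesEquations}, to rewrite each equation of \eqref{SecondFullDispersionModel(InVbar)} with an $O(\mu^2\epsilon)$ remainder. The first equation is free: it is \emph{exactly} the first water waves equation, since $\partial_t\zeta = \tfrac1\mu\mathcal{G}^\mu\psi = -\nabla\cdot(h\overline V)$ by the definition \eqref{Dirichlet-to-NeumannOperator} of the Dirichlet-to-Neumann operator, so $R$ vanishes identically there. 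The substance is in the second equation, which must be obtained by applying a suitable operator to the second water waves equation and then systematically trading $\nabla\psi$ for $\overline V$ using the above approximation.

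First I would note that $\operatorname{Id}+\mu T[h]$ is, modulo $O(\mu^2\epsilon)$ and up to the $\tfrac1h\nabla(\cdots)$ structure, the operator sending $\overline V$ to $\nabla\psi$: indeed the fourth estimate says $\nabla\psi = \overline V - \tfrac{\mu}{3h}\nabla(h^3\mathrm F_3\nabla\cdot\overline V) + O(\mu^2\epsilon)$, and one checks that applying $\partial_t$ commutes with this identity up to controlled remainders (here the hypothesis that $(\zeta,\psi)$ solves the water waves equations on $[0,T/\epsilon]$ with enough regularity, via Theorem 4.16 of \cite{WWP}, lets us bound $\partial_t\zeta$, $\partial_t\psi$ and hence time-differentiated quantities in terms of $N(s+n)$). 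The symmetrized form $T[h]V = -\tfrac{1}{6h}(\nabla(h^3\mathrm F_3[\nabla\cdot V])+\nabla(\mathrm F_3[h^3\nabla\cdot V]))$ appears because we want a Hamiltonian (symmetric) model; the difference between $T[h]$ and the non-symmetric operator $-\tfrac{1}{3h}\nabla(h^3\mathrm F_3\nabla\cdot\,\cdot)$ is itself $O(\mu)$ applied to $\overline V$, and since it multiplies a $\mu$-prefactor and acts on a quantity that differs from $\nabla\psi$ by $O(\mu)$, replacing one by the other only perturbs $\partial_t((\operatorname{Id}+\mu T[h])\overline V)$ by an acceptable $O(\mu^2)$-with-time-derivative term, which on the $1/\epsilon$ time scale contributes $O(\mu^2\epsilon)$. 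So the strategy is: take $\partial_t\psi + \zeta + \tfrac\epsilon2|\nabla\psi|^2 - \tfrac{\mu\epsilon}{2}h^2(\mathrm F_2[\Delta\psi])\Delta\psi = \mu^2\epsilon R_2$ from Proposition~\ref{Proposition consistency first full dispersion GN system} (valid with $n=4$), apply $\nabla$, and then convert every occurrence of $\psi$-dependent quantities into $\overline V$-dependent ones.

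The key substitutions, each generating only $O(\mu^2\epsilon)$ errors after multiplication by the appropriate power of $\mu$: replace $\nabla\partial_t\psi$ by $\partial_t((\operatorname{Id}+\mu T[h])\overline V)$ using the time-differentiated fourth estimate plus the symmetrization remark above; replace $\nabla(\tfrac\epsilon2|\nabla\psi|^2)$ by $\tfrac\epsilon2\nabla(|\overline V|^2)$, which costs $\epsilon\cdot O(\mu)$ — but this is only $O(\mu\epsilon)$, not $O(\mu^2\epsilon)$! — so here one must be more careful and use the \emph{second} estimate $\overline V = \mathrm F_1\nabla\psi + O(\mu\epsilon)$... actually $\overline V = \nabla\psi + \tfrac{\mu}{3h}\nabla(h^3\mathrm F_2\Delta\psi) + O(\mu^2\epsilon)$ (combining the second and third estimates), so $|\overline V|^2 = |\nabla\psi|^2 + \tfrac{2\mu}{3h}\nabla\psi\cdot\nabla(h^3\mathrm F_2\Delta\psi) + O(\mu^2\epsilon)$, and the $O(\mu)$ correction term must be carried and combined with the existing $-\tfrac{\mu\epsilon}{2}h^2\mathrm F_2[\Delta\psi]\Delta\psi$ term and the $-\tfrac{\mu\epsilon}{6}\nabla(\tfrac1h\overline V\cdot\nabla(\cdots))$ term of \eqref{SecondFullDispersionModel(InVbar)}; finally, inside the $\mu\epsilon$-order terms, $\Delta\psi = \nabla\cdot\nabla\psi = \nabla\cdot\overline V + O(\mu)$ and $\mathrm F_2[\Delta\psi] = \mathrm F_2\mathrm F_1^{-1}[\nabla\cdot\overline V] + O(\mu) = \mathrm F_3[\nabla\cdot\overline V]+O(\mu)$ via the definition $\mathrm F_3 = \mathrm F_2\circ\mathrm F_1^{-1}$ and $\overline V = \mathrm F_1\nabla\psi + O(\mu\epsilon)$, so each $\mu\epsilon$-term converts with an $O(\mu^2\epsilon)$ error. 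Collecting all the $O(\mu\epsilon)$ and $O(\mu^2\epsilon)$ contributions and checking they telescope into exactly the three lower-order terms written in \eqref{SecondFullDispersionModel(InVbar)} gives the result with $R_3$ bounded by $N(s+n+1)$; the extra two derivatives relative to Proposition~\ref{Proposition consistency first full dispersion GN system} (hence $n=6$ rather than $4$) come from applying $\nabla$ to everything and from the $M(s+4)$ loss in the fourth estimate versus $M(s+3)$ in the third.

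The main obstacle I anticipate is the bookkeeping of the $O(\mu\epsilon)$-order terms: naively converting $\nabla\psi\to\overline V$ and $\psi$-quadratic terms into $\overline V$-quadratic terms loses a full power of $\mu$, so one cannot work purely at leading order. One has to expand $\overline V$ to \emph{one order further} (the $\tfrac{\mu}{3h}\nabla(h^3\mathrm F_2\Delta\psi)$ correction) everywhere a $\nabla\psi$ sits inside a non-$\mu$-weighted term, substitute, and then verify that the resulting collection of $O(\mu\epsilon)$ terms — coming from $\partial_t(\mu T[h]\overline V)$ expanded, from $|\overline V|^2$ expanded, and from the defining-equation term $-\tfrac{\mu\epsilon}{2}h^2\mathrm F_2[\Delta\psi]\Delta\psi$ — reassembles exactly into the $-\tfrac{\mu\epsilon}{6}\nabla(\tfrac1h\overline V\cdot\nabla(h^3\mathrm F_3[\nabla\cdot\overline V]+\mathrm F_3[h^3\nabla\cdot\overline V])) - \tfrac{\mu\epsilon}{2}\nabla(h^2\mathrm F_3[\nabla\cdot\overline V]\nabla\cdot\overline V)$ of \eqref{SecondFullDispersionModel(InVbar)}, with nothing left over at order $\mu\epsilon$. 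This is a finite but delicate algebraic identity; all the analytic estimates needed to control the resulting remainders are routine applications of product and commutator estimates in Sobolev spaces together with the boundedness of $\mathrm F_1,\mathrm F_2,\mathrm F_3$ (and $\mathrm F_1^{-1}$) as Fourier multipliers, using that $\mathrm F_1$ is bounded below so $\mathrm F_1^{-1}$ is well-behaved.
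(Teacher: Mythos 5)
Your treatment of the spatial part of the second equation is essentially the paper's: start from the simplified second water waves equation obtained in the proof of Proposition \ref{Proposition consistency first full dispersion GN system}, take the gradient, and substitute $\nabla\psi$ by its symmetrized $\overline{V}$-expansion inside the quadratic term so that the cross term produces $-\frac{\mu\epsilon}{6}\nabla(\frac1h\overline V\cdot\nabla(\cdots))$; your identification of the $O(\mu\epsilon)$ bookkeeping as the delicate point is exactly right, and the first equation is indeed exact. However, there are two genuine gaps. The first and most serious concerns the term $\partial_t((Id+\mu T[h])\overline V)$. You assert that ``applying $\partial_t$ commutes with this identity up to controlled remainders'' because $\partial_t\zeta$ and $\partial_t\psi$ are bounded. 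This does not follow: $\overline V$ is defined through \eqref{VerticallyAveragedHorizontalVelocity} in terms of the solution $\phi$ of the elliptic problem \eqref{StraightenedLaplaceProblem}, and the error estimate of Proposition \ref{Proposition Approximation Vbar estimates} is itself an elliptic estimate on $\tilde u=\phi-\tilde\phi_{\appr}$. To bound $\partial_t(\overline V-\nabla\psi-\frac{\mu}{3h}\nabla(h^3\mathrm F_3\nabla\cdot\overline V))$ one must control $\nabla^\mu\partial_t\tilde u$, and $\partial_t\tilde u$ solves a \emph{new} elliptic problem whose source contains $\nabla^\mu\cdot(\partial_tP(\Sigma_t)\nabla^\mu\tilde u)$, a term in divergence form that is not covered by the basic estimate of lemma \ref{lemma 3.43}. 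The paper has to decompose $\partial_t\tilde u=v_1+v_2$ and invoke a separate elliptic lemma (lemma \ref{lemme 2.38}) for the divergence-form part; this is the bulk of the proof and is the actual source of the extra regularity $n=6$ (your attribution of the regularity loss to the gradient and to $M(s+4)$ versus $M(s+3)$ is not where it comes from --- the paper even notes that the non-symmetrized system only needs $n=5$).

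The second gap is your justification of the symmetrization. The difference between $\mu T[h]\overline V$ and $-\frac{\mu}{3h}\nabla(h^3\mathrm F_3[\nabla\cdot\overline V])$ is $\frac{\mu}{6h}\nabla$ applied to the commutator $[\mathrm F_3,h^3]\nabla\cdot\overline V=[\mathrm F_3-1,h^3-1]\nabla\cdot\overline V$, and the factor $\epsilon$ needed to reach $O(\mu^2\epsilon)$ comes from $h^3-1=O(\epsilon)$ combined with the commutator estimate \ref{Commutator estimates} (which supplies the extra $\mu$ from $\mathcal N^2(\mathrm F_3-1)\lesssim\mu$). Your argument --- that an ``$O(\mu^2)$-with-time-derivative term \ldots on the $1/\epsilon$ time scale contributes $O(\mu^2\epsilon)$'' --- is not valid: consistency remainders are measured pointwise in time on $[0,T/\epsilon]$, so the length of the time interval cannot convert $O(\mu^2)$ into $O(\mu^2\epsilon)$. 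Without the commutator structure you would only get $O(\mu^2)$, which is strictly weaker than the claimed precision. Both gaps are repairable, but they require the elliptic-in-time analysis and the commutator estimates that your proposal omits.
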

Setting $\mathrm{F_3} = I_d$ in \eqref{FirstFullDispersionModel(InPsi)} we recover the Green-Naghdi system introduced in \cite{CraigGroves94, Whitham67}. Setting $\mathrm{F_3} = I_d$ in \eqref{SecondFullDispersionModel(InVbar)} we recover the classical Green-Nagdhi system which has been proved to be consistent with precision $O(\mu^2)$ in \cite{WWP, LannesBonneton09}. We refer to \eqref{FirstFullDispersionModel(InPsi)} and \eqref{SecondFullDispersionModel(InVbar)} as full dispersion Green-Nagdhi systems.
\begin{myrem}
    The two full dispersion Green-Nagdhi systems \eqref{FirstFullDispersionModel(InPsi)} and \eqref{SecondFullDispersionModel(InVbar)} enjoy a canonical Hamiltonian formulation (see \eqref{First approximated hamiltonian} and \eqref{Second approximated hamiltonian}).
\end{myrem}

From these two previous propositions we are able to give results on the consistency with respect to water waves equations \eqref{WaterWavesEquations} of most of the full dispersion systems appearing in the literature. We give the examples of two systems that kept the author attention for there mathematical properties. The first one is a single layer, two dimensional generalisation with no surface tension of the model introduced in \cite{DucheneIsrawiTalhouk16} to study high-frequency Kevin-Helmholtz instabilities. That is
\begin{align}\label{FullDisperionGNSystmVincentIntro}
     \begin{cases}  
        \partial_t \zeta + \nabla \cdot(h\overline{V})=0, \\
        \partial_t(\overline{V}-\frac{\mu}{3h}\nabla(\sqrt{\mathrm{F}_3}h^3\sqrt{\mathrm{F}_3}[\nabla\cdot\overline{V}]) ) +\nabla \zeta &+\frac{\epsilon}{2} \nabla(|\overline{V}|^2) - \frac{\mu\epsilon}{3}\nabla(\frac{1}{h}\overline{V}\cdot \nabla(\sqrt{\mathrm{F}_3}h^3\sqrt{\mathrm{F}_3}[\nabla\cdot\overline{V}])) \\
        &- \frac{\mu\epsilon}{2}\nabla(h^2\mathrm{F}_3[\nabla\cdot\overline{V}]\nabla\cdot\overline{V}) =0.
    \end{cases}
\end{align}
\begin{mypp}\label{Consistency proposition Vincent Gn system}
    The water waves equations are consistent at order $O(\mu^2\epsilon)$ in the shallow water regime $\mathcal{A}$ with the system \eqref{FullDisperionGNSystmVincentIntro}.
\end{mypp}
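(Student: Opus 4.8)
Here is how I would go about it.

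The plan is to deduce this from Proposition~\ref{Proposition consistency second Full dispersion Gn System} by showing that \eqref{FullDisperionGNSystmVincentIntro} and \eqref{SecondFullDispersionModel(InVbar)} differ only by terms that are $O(\mu^2\epsilon)$ once evaluated on a water waves solution. First I would fix such a solution $(\zeta,\psi)$ with the regularity given by Theorem~4.16 of \cite{WWP} and set $\overline{V}\colonequals\overline{V}[\epsilon\zeta]\psi$. The first equations of the two systems are identical and are satisfied exactly by $(\zeta,\overline{V})$ (by the definitions of $\overline{V}$ and $\mathcal{G}^{\mu}$, since $\partial_t\zeta=\frac{1}{\mu}\mathcal{G}^{\mu}\psi=-\nabla\cdot(h\overline{V})$), so only the difference of the left-hand sides of the two \emph{second} equations has to be estimated in $H^s$. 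Writing $A\colonequals h^3\mathrm{F}_3[\nabla\cdot\overline{V}]$, $B\colonequals\mathrm{F}_3[h^3\nabla\cdot\overline{V}]$, $C\colonequals\sqrt{\mathrm{F}_3}\,h^3\sqrt{\mathrm{F}_3}[\nabla\cdot\overline{V}]$, one has $\mu T[h]\overline{V}=-\frac{\mu}{6h}\nabla(A+B)$ in \eqref{SecondFullDispersionModel(InVbar)} while the corresponding quantity in \eqref{FullDisperionGNSystmVincentIntro} is $-\frac{\mu}{3h}\nabla C$, and the last nonlinear term coincides; a direct subtraction then shows that the difference of the two second equations equals
\[
\mathcal{D}\colonequals-\partial_t\!\Big(\tfrac{\mu}{3h}\nabla\mathcal{E}\Big)-\tfrac{\mu\epsilon}{3}\nabla\!\Big(\tfrac{1}{h}\,\overline{V}\cdot\nabla\mathcal{E}\Big),\qquad\mathcal{E}\colonequals C-\tfrac{1}{2}(A+B),
\]
so the whole problem reduces to suitable $H^s$ bounds on $\mathcal{E}$ and $\partial_t\mathcal{E}$.

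The hard part --- and where the gain over the classical Green--Naghdi system is used --- is to see that $\mathcal{E}$ is not merely $O(\mu^2)$ but genuinely $O(\mu^2\epsilon)$: a crude bound on the difference of the two $T$-operators would only give $O(\mu^2)$, which after the time derivative is not enough. The point is that $\mathcal{E}$ vanishes both when $\mathrm{F}_3=Id$ (then $A=B=C=h^3\nabla\cdot\overline{V}$) and when $h^3$ is constant. To exploit this I would write $\mathrm{F}_3=Id+R$, $\sqrt{\mathrm{F}_3}=Id+S$ with $2S+S^2=R$, and verify by a short computation the double-commutator identity
\[
\mathcal{E}=-\tfrac{1}{2}\big[S,[S,\,h^3-1]\big]\,[\nabla\cdot\overline{V}],
\]
where $[\cdot,\cdot]$ is the operator commutator and $h^3-1$ acts by multiplication. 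Now $S=\sqrt{\mathrm{F}_3}-Id$ is a Fourier multiplier whose symbol $\sigma_S(\xi)$ is a bounded function of $\sqrt{\mu}|\xi|$ with $|\nabla_\xi\sigma_S(\xi)|\leq C\sqrt{\mu}$ uniformly, hence $|\sigma_S(\xi)-\sigma_S(\eta)|\leq C\sqrt{\mu}\,|\xi-\eta|$; since the Fourier kernel of $[S,[S,m]]$ is $\widehat{m}(\xi-\eta)(\sigma_S(\xi)-\sigma_S(\eta))^2$, a tame product estimate gives $|[S,[S,m]]v|_{H^r}\leq C\mu\,|m|_{H^{t_0+2}}|v|_{H^r}$. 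Applying this with $m=h^3-1=O(\epsilon)$ and $v=\nabla\cdot\overline{V}$, and controlling $\overline{V}$ by $\mathrm{F}_1\nabla\psi$ up to $O(\mu\epsilon)$ via the first estimate of \eqref{AsymptoticDevelopmentVbar}, yields $|\mathcal{E}|_{H^s}\leq\mu\epsilon\,N(s+1)$. Differentiating the identity in time produces $[S,[S,\partial_t(h^3)]][\nabla\cdot\overline{V}]$ and $[S,[S,h^3-1]][\nabla\cdot\partial_t\overline{V}]$, where $\partial_t(h^3)=3h^2\epsilon\,\partial_t\zeta$ is again $O(\epsilon)$ and $\partial_t\zeta,\partial_t\overline{V}$ are bounded by $N(\cdot)$ on $[0,T/\epsilon]$ --- using $\partial_t\zeta=-\nabla\cdot(h\overline{V})$, the water waves equation for $\partial_t\psi$, and the shape-derivative estimates for $\overline{V}[\epsilon\zeta]\psi$ from \cite{WWP}, exactly as in the proof of Proposition~\ref{Proposition consistency second Full dispersion Gn System}; this gives $|\partial_t\mathcal{E}|_{H^s}\leq\mu\epsilon\,N(s+n_0)$ for a fixed $n_0$.

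It then remains to propagate these bounds through $\mathcal{D}$. The term $-\frac{\mu\epsilon}{3}\nabla(\frac1h\overline{V}\cdot\nabla\mathcal{E})$ carries an explicit $\mu\epsilon$ and, with $\mathcal{E}=O(\mu\epsilon)$, is $O(\mu^2\epsilon^2)$; in $\partial_t(\frac{\mu}{3h}\nabla\mathcal{E})=\frac{\mu}{3}\partial_t(\frac1h)\nabla\mathcal{E}+\frac{\mu}{3h}\nabla\partial_t\mathcal{E}$ the first summand is $O(\mu^2\epsilon^2)$ since $\partial_t(\frac1h)=-\epsilon\,\partial_t\zeta/h^2=O(\epsilon)$, and the second is $O(\mu^2\epsilon)$ by the bound on $\partial_t\mathcal{E}$. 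Hence $|\mathcal{D}|_{H^s}\leq\mu^2\epsilon\,N(s+n)$ for a fixed integer $n$ (one may take $n$ slightly larger than the value $6$ of Proposition~\ref{Proposition consistency second Full dispersion Gn System}, to absorb the finitely many derivatives lost in the commutator and product estimates and in passing from $\psi$ to $\overline{V}$). Adding $\mathcal{D}$ to the residual $R_3$ of Proposition~\ref{Proposition consistency second Full dispersion Gn System} produces the residual of \eqref{FullDisperionGNSystmVincentIntro}, which is still $O(\mu^2\epsilon)$, and this is the asserted consistency. The only genuinely delicate steps are the double-commutator identity for $\mathcal{E}$ --- which is exactly what supplies the extra factor $\epsilon$ that a naive comparison of the two $T$-operators would miss --- and the time-derivative estimate on $\overline{V}$; everything else is bookkeeping.
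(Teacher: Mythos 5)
Your proposal is correct and follows the same route as the paper: reduce to Proposition \ref{Proposition consistency second Full dispersion Gn System} and estimate the change of symmetrization $2\sqrt{\mathrm{F}_3}[h^3\sqrt{\mathrm{F}_3}[\cdot]]-h^3\mathrm{F}_3[\cdot]-\mathrm{F}_3[h^3\cdot]$, which is exactly the paper's identity \eqref{Change of symmetrization}. The paper's own proof is explicitly only formal (it records the error as $O(\mu)$ and defers the details), whereas your exact double-commutator identity $2C-A-B=-[S,[S,h^3-1]][\nabla\cdot\overline{V}]$ and the kernel bound $\widehat{m}(\xi-\eta)(\sigma_S(\xi)-\sigma_S(\eta))^2$ make precise where the crucial extra factor $\epsilon$ (and the treatment of the time derivative) comes from, so your write-up is a faithful rigorous elaboration of the paper's sketch rather than a different argument.
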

For the same reason as system \eqref{SecondFullDispersionModel(InVbar)}, we refer to \eqref{FullDisperionGNSystmVincentIntro} as a full dispersion Green-Naghdi system.

The second one is a Whitham-Boussinesq system studied in \cite{DinvaySelbergTesfahun19}. They proved a local well-posedness result in dimension 2 and a global well-posedness result for small data in dimension 1. This system is
\begin{align}\label{Full Dispersion Boussinesq system Dinvay Intro}
    \begin{cases}
        \partial_t \zeta + \mathrm{F}_1\Delta\psi + \epsilon \mathrm{F}_1\nabla\cdot(\zeta \mathrm{F}_1\nabla\psi)=0,\\
        \partial_t\nabla\psi + \nabla\zeta + \frac{\epsilon}{2}\nabla(\mathrm{F}_1|\nabla\psi|)^2 = 0.
    \end{cases}
\end{align}
With the definition we gave of consistency (see definition \ref{Definition consistency}) we have the following proposition. 
\begin{mypp}\label{Consistency proposition Dinvay system}
The water waves equations \eqref{WaterWavesEquations} are consistent at order $O(\mu\epsilon)$ in the shallow water regime $\mathcal{A}$ with the system
\begin{align*}
    \begin{cases}
        \partial_t \zeta + \mathrm{F}_1\Delta\psi + \epsilon \mathrm{F}_1\nabla\cdot(\zeta \mathrm{F}_1\nabla\psi)=0,\\
        \partial_t\psi + \zeta + \frac{\epsilon}{2}(\mathrm{F}_1|\nabla\psi|)^2 = 0.
    \end{cases}
\end{align*}
\end{mypp}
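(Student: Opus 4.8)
The plan is to derive the claimed model directly from the first estimate in Proposition~\ref{Proposition Approximation Vbar estimates} together with the water waves equations~\eqref{WaterWavesEquations}, treating everything at the coarser order $O(\mu\epsilon)$. First I would recall that for a solution $(\zeta,\psi)\in C([0,T/\epsilon];H^{s+n}\times\dot H^{s+n+1})$ of \eqref{WaterWavesEquations} with $n$ large enough, the non-cavitation hypothesis \eqref{FundamentalHypothesis} holds uniformly on the time interval (this is the content of Theorem~4.16 in \cite{WWP} quoted after Definition~\ref{Definition consistency}), so the estimates of Proposition~\ref{Proposition Approximation Vbar estimates} may be applied at each time $t$. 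The first line of \eqref{AsymptoticDevelopmentVbar} gives $\overline V = \mathrm F_1\nabla\psi + \mu\epsilon\,r$ with $|r|_{H^s}\le M(s+3)|\nabla\psi|_{H^{s+2}}$; since $\mathcal G^\mu\psi = -\mu\nabla\cdot(h\overline V)$ by \eqref{Dirichlet-to-NeumannOperator}, the first equation of \eqref{WaterWavesEquations} becomes $\partial_t\zeta + \nabla\cdot(h\overline V) = 0$, hence $\partial_t\zeta + \nabla\cdot(h\,\mathrm F_1\nabla\psi) = -\mu\epsilon\,\nabla\cdot(h\,r)$. Writing $h=1+\epsilon\zeta$ and expanding, $\nabla\cdot(h\,\mathrm F_1\nabla\psi) = \mathrm F_1\Delta\psi + \epsilon\nabla\cdot(\zeta\,\mathrm F_1\nabla\psi)$, which is exactly the desired first equation; the remainder $R_1 = -\nabla\cdot(h\,r)$ is controlled by $N(s+n)$ after one derivative loss, so the $H^s$ bound holds with $n$ chosen to absorb the regularity cost of Proposition~\ref{Proposition Approximation Vbar estimates} (i.e. $n$ at least $4$).

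For the second equation I would start from the second line of \eqref{WaterWavesEquations} and show that the quadratic term involving $\mathcal G^\mu\psi$ is itself $O(\mu\epsilon)$ after one more factor, so it can be discarded into the remainder. Indeed $\mathcal G^\mu\psi = O(\mu)$ in the relevant norm (again via \eqref{Dirichlet-to-NeumannOperator} and the boundedness of $\overline V$), so $\frac{\epsilon}{\mu}\frac{(\mathcal G^\mu\psi+\epsilon\mu\nabla\zeta\cdot\nabla\psi)^2}{2(1+\epsilon^2\mu|\nabla\zeta|^2)} = \frac{\epsilon\mu}{2}(\nabla\cdot(h\overline V))^2 + O(\epsilon^2\mu^2 + \cdots)$ — more carefully, this whole term is $O(\mu\epsilon)$ times a factor bounded by $N(s+n)$, hence absorbable into $\mu\epsilon R_2$. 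What remains is $\partial_t\psi + \zeta + \frac{\epsilon}{2}|\nabla\psi|^2 = \mu\epsilon R_2$, and since the target second equation in the statement is precisely $\partial_t\psi + \zeta + \frac{\epsilon}{2}(\mathrm F_1|\nabla\psi|)^2 = 0$, I must also account for the difference $\frac{\epsilon}{2}(|\nabla\psi|^2 - (\mathrm F_1|\nabla\psi|)^2)$. The point is that $\mathrm F_1 = \mathrm{Id} + O(\mu)$ as a Fourier multiplier (its symbol is $1 - \frac{\mu|\xi|^2}{3} + O(\mu^2)$), so $(\mathrm F_1 - \mathrm{Id})|\nabla\psi|$ costs a factor $\mu$ and two derivatives, giving $\frac{\epsilon}{2}(|\nabla\psi|^2-(\mathrm F_1|\nabla\psi|)^2) = \mu\epsilon\times(\text{bounded by }N(s+n))$; this too goes into $R_2$.

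The only genuinely delicate points are bookkeeping ones. First, I should make precise the statement ``$\mathrm F_1 = \mathrm{Id} + O(\mu)$ in $H^s$'': this follows from $|\xi|^{-2}|\tfrac{\tanh(\sqrt\mu|\xi|)}{\sqrt\mu|\xi|} - 1|\le C\mu$ uniformly in $\xi$ and $\mu\le\mu_{\max}$, so that $\Lambda^{-2}(\mathrm F_1-\mathrm{Id})$ (equivalently, $(\mathrm F_1-\mathrm{Id})$ acting with a loss of two derivatives) is $O(\mu)$ as a bounded operator; a short lemma or an inline remark handles this. Second, one must track the Sobolev indices: each use of Proposition~\ref{Proposition Approximation Vbar estimates} with parameter $s$ requires $\zeta\in H^{s+4}$ and $\psi\in\dot H^{s+5}$, and the $\nabla\cdot$ in the remainder of the first equation costs one more derivative, so the honest choice is $n$ such that the hypotheses of Proposition~\ref{Proposition Approximation Vbar estimates} are met at level $s$; I would simply state ``the result holds with $n$ sufficiently large'' or pin it down to $n=5$. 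I do not anticipate any conceptual obstacle: the whole proof is a substitution of the first asymptotic expansion of $\overline V$ into \eqref{WaterWavesEquations} followed by the observation that the $\mathcal G^\mu\psi$-quadratic term and the $(\mathrm F_1 -\mathrm{Id})$-corrections are of size $\mu\epsilon$. The main care is in verifying that all remainders are controlled by $N(s+n)$ on the whole interval $[0,T/\epsilon]$, which is immediate once the uniform non-cavitation bound from Theorem~4.16 of \cite{WWP} is in hand.
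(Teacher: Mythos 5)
Your proposal is correct in substance but follows a genuinely different route from the paper. The paper does not return to the water waves equations at all: it starts from the already-established consistency at order $O(\mu^2\epsilon)$ with the full dispersion Green--Naghdi system \eqref{FirstFullDispersionModel(InPsi)} (Proposition \ref{Proposition consistency first full dispersion GN system}), discards the terms of that system which are themselves $O(\mu\epsilon)$, and uses the exact identity $\Delta\psi+\frac{\mu}{3}\Delta\mathrm{F}_2\Delta\psi=\mathrm{F}_1\Delta\psi$ to make the Whitham--Boussinesq dispersion appear; the remaining discrepancies are absorbed via the estimates on $\mathrm{F}_1$ of Proposition \ref{FourierMultiplierApproximations} and product estimates. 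You instead substitute the first (coarser) expansion $\overline{V}=\mathrm{F}_1\nabla\psi+\mu\epsilon r$ of Proposition \ref{Proposition Approximation Vbar estimates} directly into $\partial_t\zeta+\nabla\cdot(h\overline{V})=0$ and treat the second equation of \eqref{WaterWavesEquations} by hand. Your route is more self-contained (it does not require the $O(\mu^2\epsilon)$ Green--Naghdi result as an intermediate step) and is arguably closer to a fully rigorous argument than the paper's, which is explicitly presented only formally; the paper's route is shorter given what has already been proved. Both yield the same remainder structure and comparable values of $n$.

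One slip to fix: the identity $\nabla\cdot(h\,\mathrm{F}_1\nabla\psi)=\mathrm{F}_1\Delta\psi+\epsilon\nabla\cdot(\zeta\,\mathrm{F}_1\nabla\psi)$ is \emph{not} ``exactly the desired first equation'' --- the target has an additional outer $\mathrm{F}_1$ in the nonlinear term, namely $\epsilon\,\mathrm{F}_1\nabla\cdot(\zeta\,\mathrm{F}_1\nabla\psi)$. The difference $\epsilon(\mathrm{Id}-\mathrm{F}_1)\nabla\cdot(\zeta\,\mathrm{F}_1\nabla\psi)$ is of size $O(\mu\epsilon)$ by the very estimate $|(\mathrm{F}_1-1)u|_{H^s}\lesssim\mu|\nabla u|_{H^{s+1}}$ that you invoke for the second equation, so the repair is one line, but as written the claim is inaccurate and the term must be explicitly sent to the remainder $R_1$ (with the corresponding extra derivative counted in $n$).
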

\noindent But one can easily adapt definition \ref{Definition consistency} to say that the water waves equations \eqref{WaterWavesEquations} are consistent at order $O(\mu\epsilon)$ with system \eqref{Full Dispersion Boussinesq system Dinvay Intro}.

\begin{myrem}
    At the best of the author's knowledge, in the case of the full dispersion Green-Naghdi systems, before this work it was only known that the water waves equations are consistent in the shallow water regime with respect to system \eqref{FullDisperionGNSystmVincentIntro} at order $O(\mu^2)$ at worse (proposition 5.7 in \cite{DucheneIsrawiTalhouk16}), that is the same precision as the one of the classical Green-Naghdi models, see chapter 5 in \cite{WWP}. The use of proposition \ref{Proposition Approximation Vbar estimates} allow us to improve the precision order by a factor $\epsilon$, as conjectured in \cite{DucheneIsrawiTalhouk16}. So that in a situation in which $\epsilon \sim \mu$ (long wave regime) we gain a power of $\mu$, i.e. the full dispersion Green-Naghdi systems are precised at order $O(\mu^3)$ in the long wave regime, in the sense of consistency. Moreover even if $\mu$ is not so small, the latter systems stay good approximations of the water waves equations as long as $\epsilon$ is small enough, making them more robust than the corresponding classical Green-Naghdi models.
    
    The case of the Whitham-Boussinesq systems is a bit more subtle. Indeed, using the same argument as in the proof of proposition 5.7 in \cite{DucheneIsrawiTalhouk16}, one would obtain, in the shallow water regime, a precision order of $O(\mu^2 + \mu\epsilon)$ for these systems, that is the same as the one of the Boussinesq models, see chapter 5 in \cite{WWP}. In this paper we prove that the precision order of the Whitham-Boussinesq systems is in fact $O(\mu\epsilon)$. So that the improvement can only be seen in a regime in which $\epsilon \ll \mu$. It still makes them more robust than the Boussinesq models for the same reason as the full dispersion Green-Naghdi systems.
\end{myrem}

\subsection{outline}

In section \ref{Shallow water approximation of the vertically averaged horizontal component of the velocity} we prove proposition \ref{Proposition Approximation Vbar estimates}. We begin in subsection \ref{Formal construction} by constructing the shallow water expansions appearing in the proposition using a formal reasoning, see lemma \ref{Formal expansions of phi}. Then we use, in subsection \ref{Rigorous expansions}, the fact that we have explicit candidates as approximations of the velocity potential to prove the estimates of proposition \ref{Proposition Approximation Vbar estimates}.

In section \ref{Derivation and consistency of a first full dispersion Green-Naghdi system} we focus on system \eqref{FirstFullDispersionModel(InPsi)}. First, in subsection \ref{Formal Derivation}, we derive formally \eqref{FirstFullDispersionModel(InPsi)} from Hamilton's equations associated with an approximated Hamiltonian based on proposition \ref{Proposition Approximation Vbar estimates}. In subsection \ref{Consistency with respect to the water waves system} we prove rigorously the consistency of the water waves equations with system \eqref{FirstFullDispersionModel(InPsi)}, i.e. we prove proposition \ref{Proposition consistency first full dispersion GN system}.

In section \ref{Derivation and consistency of a second full dispersion Green-Naghdi system} we focus on system \eqref{SecondFullDispersionModel(InVbar)} and do the same process as for system \eqref{FirstFullDispersionModel(InPsi)}. In subsection \ref{Formal Derivation 2} we get a second approximated Hamiltonian of the water waves system \eqref{WaterWavesEquations} and derive the Hamilton equations associated with, giving \eqref{SecondFullDispersionModel(InVbar)}. In subsection \ref{Consistency with respect to the water waves equations} we prove proposition \ref{Proposition consistency second Full dispersion Gn System}.

In section \ref{Consistency of the full dispersion models appearing in the literature} we prove the consistency of the water waves equations with the systems \eqref{FullDisperionGNSystmVincentIntro} and \eqref{Full Dispersion Boussinesq system Dinvay Intro}. In subsection \ref{Full dispersion Green-Naghdi system} we make use of proposition \ref{Proposition consistency first full dispersion GN system} to prove proposition \ref{Consistency proposition Vincent Gn system}. And in subsection \ref{Full dispersion Boussinesq systems} we use \ref{Proposition consistency second Full dispersion Gn System} to prove proposition \ref{Consistency proposition Dinvay system}.

\section{Shallow water approximation of the vertically averaged horizontal component of the velocity}
\label{Shallow water approximation of the vertically averaged horizontal component of the velocity}

\subsection{Formal construction}
Here, we construct formally two different approximations of the velocity potential $\phi$ (see definition \ref{Definitions Fondamentales}) at order $O(\mu^2\epsilon)$ (see just below notation \ref{order O mu squared epsilon}) then deducing ones on the vertically averaged horizontal velocity $\overline{V}$ (see definition \ref{Definitions Fondamentales}) with the same order of precision in terms of the trace at the surface of the velocity potential $\psi$ (see also definition \ref{Definitions Fondamentales}). And we also construct an approximation of this last quantity in terms of $\overline{V}$. For these purposes, we use a method similar to the one developed in chapter 5 of \cite{WWP}. Everything can be proved in a functional framework and rigorous results will be provided in subsection 2.2.

Before starting the reasoning, for the sake of clarity we introduce a notation.
\begin{mynot}\label{order O mu squared epsilon}
    Let $k\in\mathbb{N}$ and $ l\in\mathbb{N}$. In all this paper, a function $R$ is said to be of order $O(\mu^k\epsilon^l)$ if divided by $\mu^k\epsilon^l$ this function is uniformly bounded with respect to $(\mu,\epsilon) \in \mathcal{A}$ (defined in definition \ref{Definitions Fondamentales}) in some Sobolev norm.
\end{mynot}

Let us also recall (see again definition \ref{Definitions Fondamentales}) that by definition the velocity potential $\phi$ satisfies an elliptic problem in the flat strip $\mathcal{S} = \mathbb{R}^d \times (-1,0)$.\\
This problem is written in term of the velocity potential at the surface $\psi$:
\begin{align}\label{Laplace Problem in the flat strip}
    \begin{cases} \nabla^{\mu} \cdot P(\Sigma_t)\nabla^{\mu} \phi = 0, \\
    \phi_{|_{z=0}} = \psi \ \ , \ \ \partial_z \phi_{|_{z=-1}}=0,
    \end{cases}
\end{align}
where $\nabla^{\mu}$ stands for the $(d+1)$-gradient operator defined by $\nabla^{\mu} = (\sqrt{\mu}\nabla^T,\partial_z)^T$,\\ and $P(\Sigma_t) =  \begin{pmatrix} (1+\epsilon \zeta)I_d & -\sqrt{\mu}\epsilon(z+1)\nabla\zeta \\ -\sqrt{\mu}\epsilon(z+1)\nabla\zeta^{T} & \frac{1 + \mu \epsilon^2 (z+1)^2|\nabla\zeta|^2}{1+\epsilon\zeta} \end{pmatrix}$.

\indent 

Now we begin the constructions.\\
\underline{Step 1}: The first step is to find approximations of $\phi$, which satisfy the elliptic problem \eqref{Laplace Problem in the flat strip} up to terms of order $O(\mu^2 \epsilon)$. The functional meaning will be precised in the next subsection but can already be anticipated, we will work with Sobolev and Beppo-Levi spaces (see notations \ref{functional framework}).
\begin{mylem}\label{Formal expansions of phi}
Let $\mathrm{F}_0 = \frac{\cosh((z+1)\sqrt{\mu}|D|)}{\cosh(\sqrt{\mu}|D|)}$ be a Fourier multiplier depending on the transversal variable $z\in [-1,0]$. Let $\phi$ be the solution of the Laplace problem in the flat strip \eqref{Laplace Problem in the flat strip}. We have the formal expansions 
\begin{align*}
    \begin{cases}
        \phi_0 \colonequals \mathrm{F}_0 \psi = \phi + O(\mu\epsilon),\\ 
        \phi_{\appr} \colonequals \mathrm{F}_0 \psi - \mu \epsilon \zeta (1+h)(\frac{z^2}{2}+z)\Delta\psi = \phi + O(\mu^2\epsilon), \\
        \widetilde{\phi}_{\appr} \colonequals \psi + h^2(\mathrm{F}_0-1)\psi = \phi + O(\mu^2\epsilon).
    \end{cases}
\end{align*}
\end{mylem}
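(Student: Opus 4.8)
The plan is to follow the WKB-type construction of chapter 5 of \cite{WWP}: I would expand the variable coefficient elliptic operator in powers of $\epsilon$ while keeping $\mu$ exact, solve the principal problem explicitly via a horizontal Fourier transform, and then add a corrector that cancels the leading residual without spoiling the boundary conditions. Writing $P(\Sigma_t) = I_{d+1} + Q$, the entries of $Q$ are controlled by $h-1 = \epsilon\zeta$ and $\nabla h = \epsilon\nabla\zeta$, so $Q = O(\epsilon)$ and $\nabla^{\mu}\cdot P(\Sigma_t)\nabla^{\mu} = \mu\Delta + \partial_z^2 + \nabla^{\mu}\cdot Q\nabla^{\mu}$. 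Dropping the $O(\epsilon)$ part, the principal problem $\mu\Delta\phi_0 + \partial_z^2\phi_0 = 0$ in $\mathcal{S}$ with $\phi_0|_{z=0}=\psi$ and $\partial_z\phi_0|_{z=-1}=0$ becomes, after a Fourier transform in $X$, the ODE $(\partial_z^2 - \mu|\xi|^2)\widehat{\phi_0}=0$; imposing the Neumann condition at $z=-1$ and the Dirichlet one at $z=0$ forces exactly $\phi_0 = \mathrm{F}_0\psi$. I would then record for later use the identity $\partial_z^2\phi_0 = -\mu\Delta\phi_0$ (the equation itself), the estimate $\partial_z\phi_0 = O(\mu)$ and the expansion $(\mathrm{F}_0-1)\psi = -\mu(\tfrac{z^2}{2}+z)\Delta\psi + O(\mu^2)$ (both from the Taylor series of $\cosh$), and the fact that $\phi_0$ meets the boundary conditions exactly since $\mathrm{F}_0|_{z=0}=1$ and $\partial_z\mathrm{F}_0|_{z=-1}=0$.

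For the first expansion I would plug $\phi_0$ into the full operator, so that $\nabla^{\mu}\cdot P(\Sigma_t)\nabla^{\mu}\phi_0 = \nabla^{\mu}\cdot Q\nabla^{\mu}\phi_0$, and observe that every term there carries a factor $\mu$ — extracted either from a horizontal derivative meeting a $\sqrt{\mu}$ inside $\nabla^{\mu}$, or from $\partial_z\phi_0 = O(\mu)$, or by replacing $\partial_z^2\phi_0$ by $-\mu\Delta\phi_0$ — so that the residual is $O(\mu\epsilon)$. Since $\phi_0$ satisfies the boundary conditions exactly, $\phi - \phi_0$ then solves the elliptic problem with an $O(\mu\epsilon)$ right-hand side and zero boundary data, which yields $\phi = \phi_0 + O(\mu\epsilon)$ (the precise elliptic estimate being deferred to subsection 2.2). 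To refine this, I would push the same computation one order further, again converting the only coefficient that is not manifestly $O(\mu)$ by means of $\partial_z^2\phi_0 = -\mu\Delta\phi_0$, to obtain $\nabla^{\mu}\cdot P(\Sigma_t)\nabla^{\mu}\phi_0 = \mu\epsilon\,\zeta(1+h)\Delta\psi + O(\mu^2\epsilon)$, a source that is constant in $z$ to the relevant order. I would then look for $\phi_{\appr} = \phi_0 + \mu\epsilon\,c(z)$ with $\partial_z^2 c = -\zeta(1+h)\Delta\psi$ and $c|_{z=0}=0$, $\partial_z c|_{z=-1}=0$; integrating twice — imposing the Neumann condition then the Dirichlet one — pins down $c(z) = -\zeta(1+h)(\tfrac{z^2}{2}+z)\Delta\psi$, which is exactly the claimed $\phi_{\appr}$. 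By construction $\phi_{\appr}$ keeps the exact boundary conditions, and $\mu\Delta(\mu\epsilon c)$ and $\nabla^{\mu}\cdot Q\nabla^{\mu}(\mu\epsilon c)$ are $O(\mu^2\epsilon)$, so its residual is $O(\mu^2\epsilon)$ and $\phi = \phi_{\appr} + O(\mu^2\epsilon)$.

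For $\widetilde\phi_{\appr}$ I would simply use the algebraic identity $\psi + h^2(\mathrm{F}_0-1)\psi = \mathrm{F}_0\psi + (h^2-1)(\mathrm{F}_0-1)\psi = \phi_0 + \epsilon\zeta(1+h)(\mathrm{F}_0-1)\psi$, where $h^2-1 = \epsilon\zeta(1+h)$; by the Taylor expansion recorded above, $\epsilon\zeta(1+h)(\mathrm{F}_0-1)\psi = -\mu\epsilon\,\zeta(1+h)(\tfrac{z^2}{2}+z)\Delta\psi + O(\mu^2\epsilon) = \mu\epsilon\,c(z) + O(\mu^2\epsilon)$, hence $\widetilde\phi_{\appr} = \phi_{\appr} + O(\mu^2\epsilon) = \phi + O(\mu^2\epsilon)$. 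This also explains why the coefficient $\zeta(1+h)$ rather than, say, $2\zeta$ is the natural one: it is exactly $h^2-1$ divided by $\epsilon\zeta$, which is what lets $\widetilde\phi_{\appr}$ collapse to the compact form $\psi + h^2(\mathrm{F}_0-1)\psi$.

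The work is essentially bookkeeping, and I expect the two delicate points to be: organising the expansion of $\nabla^{\mu}\cdot Q\nabla^{\mu}\phi_0$ so that every remainder term visibly carries a factor $\mu$ — where the identity $\partial_z^2\phi_0 = -\mu\Delta\phi_0$ and the estimate $\partial_z\phi_0 = O(\mu)$ are indispensable — and choosing the corrector to respect simultaneously the Dirichlet condition at $z=0$ and the Neumann condition at $z=-1$, which is what forces the quadratic profile $\tfrac{z^2}{2}+z$ and leaves no freedom in the coefficient.
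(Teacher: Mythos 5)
Your proposal is correct and follows essentially the same route as the paper: solve the flat problem $\partial_z^2\phi_0+\mu\Delta\phi_0=0$ exactly via Fourier analysis to get $\mathrm{F}_0\psi$, build the corrector by integrating the leading $z$-independent residual $\zeta(1+h)\Delta\psi$ twice in $z$ against the Neumann-then-Dirichlet conditions, and pass to $\widetilde{\phi}_{\appr}$ via $h^2-1=\epsilon\zeta(1+h)$ together with $(\mathrm{F}_0-1)\psi=-\mu(\tfrac{z^2}{2}+z)\Delta\psi+O(\mu^2)$. The only (harmless) discrepancy is that the residual of $\phi_0$ is really $\tfrac{\mu\epsilon}{h}\zeta(1+h)\Delta\psi+O(\mu^2\epsilon)$ rather than $\mu\epsilon\zeta(1+h)\Delta\psi$ — the paper multiplies the equation by $h$ up front precisely to normalise the $\partial_z^2$ coefficient to $1$ — but since your corrector contributes $\tfrac{\mu\epsilon}{h}\partial_z^2 c$ with the same $1/h$, the equation $\partial_z^2 c=-\zeta(1+h)\Delta\psi$ you arrive at is the right one.
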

\begin{proof} 
The idea is to do a multi-scale expansion for the solution of the elliptic problem \eqref{Laplace Problem in the flat strip} by approximately solving \eqref{Laplace Problem in the flat strip} and using the technical lemma \ref{lemma 3.43}.

Let us remark that multiplying the elliptic equation of \eqref{Laplace Problem in the flat strip} by the depth $h = 1+\epsilon \zeta$ allow us to decompose it into two parts:
\begin{align}\label{Decomposition straightened Laplacian}
    h\nabla^{\mu} \cdot P(\Sigma_t)\nabla^{\mu} \phi = (\partial_{z}^{2}\phi + \mu \Delta\phi) + \mu \epsilon A(\nabla,\partial_z)[\phi],
\end{align}
where $A(\nabla,\partial_z)$ is an operator defined as follow
\begin{multline}\label{definition operator A}
    A(\nabla,\partial_z)[\phi] = \nabla \cdot (\zeta \nabla\phi) + \zeta \nabla \cdot((1+\epsilon \zeta) \nabla\phi)
    + \epsilon |\nabla \zeta|^2 \partial_{z}((z+1)^2\partial_z \phi) \\ - (1+\epsilon \zeta)(z+1)\nabla \cdot (\nabla \zeta \partial_z \phi)
    - (1+\epsilon \zeta)\nabla\zeta \cdot \partial_z( (z+1) \nabla\phi).
\end{multline}

In the elliptic problem \eqref{Laplace Problem in the flat strip} let's only consider the part which is not of order $O(\mu\epsilon)$ and denote $\phi_0$ its solution, i.e. $\phi_0$ is the solution of the problem
\begin{align}\label{Laplace problem for phi0}
    \begin{cases}
    \partial_{z}^2 \phi_0 + \mu\Delta\phi_0 =0, \\
    \phi_{0|_{z=0}} = \psi \ \ , \ \ \partial_z \phi_{0|_{z=-1}}=0.
    \end{cases}
\end{align}
We get the expression of $\phi_0$ by a Fourier analysis:
\begin{align}\label{phi0}
    \phi_0 = \frac{\cosh((z+1)\sqrt{\mu}|D|)}{\cosh(\sqrt{\mu}|D|)} \psi.
\end{align}
Thus $\phi_0$ is defined as a bounded Fourier multiplier applied to the trace at the surface of the velocity potential. And by lemma \ref{lemma 3.43}, $\phi = \phi_0 + O(\mu\epsilon)$. \\
Now we seek $\phi_1$ so that $\phi = \phi_0 + \mu \epsilon \phi_1 + O(\mu^2 \epsilon)$. \eqref{Decomposition straightened Laplacian} and the lemma \ref{lemma 3.43} tell us that we just have to ask $\phi_1$ to be the solution of the following problem:
\begin{align}\label{Laplace problem for phi1}
    \begin{cases}
    \partial_{z}^2\phi_1 = -A(\nabla,\partial_z)[\phi_0], \\
    \phi_{1|_{z=0}} = 0 \ \ , \ \ \partial_z \phi_{1|_{z=-1}}=0.
    \end{cases}
\end{align}
To solve \eqref{Laplace problem for phi1}, we integrate two times with respect to the transversal variable $z$, and we simplify the result thanks to the next formal property.

Let $\mathrm{F}_0$ be the Fourier multiplier $\frac{\cosh((z+1)\sqrt{\mu}|D|)}{\cosh(\sqrt{\mu}|D|)}$. Then for any $z\in [-1,0]$ we have
\begin{align}\label{Fourier Multipliers Expansions}
    \begin{cases}
        \frac{1-\mathrm{F}_0}{\mu|D|^2} = -\frac{z^2}{2} -z +O(\mu) \ \ , \ \ 1-(z+1)^2 \mathrm{F}_0 = -z^2-2z + O(\mu), \\
        \frac{\tanh(\sqrt{\mu}|D|)}{\sqrt{\mu}|D|} = 1 + O(\mu) \ \ , \ \ \frac{z+1}{\sqrt{\mu}|D|} \frac{\sinh((z+1)\sqrt{\mu}|D|)}{\cosh(\sqrt{\mu}|D|)} = (z+1)^2 + O(\mu).
    \end{cases}
\end{align}
See proposition \ref{FourierMultiplierApproximations} for a rigorous proof of these expansions. \\
This allows us after computations to obtain a simple approximation of $\phi_1$:
\begin{align*} 
    \phi_1 = -\zeta(1+h) (\frac{z^2}{2}+z)\Delta\psi + O(\mu).
\end{align*}
Thus we obtain the following expression of a first approximation $\phi_{app}$ of the velocity potential $\phi$ at order $O(\mu^2\epsilon)$:
\begin{align}\label{first expression of phi_app}
    \phi_{app} = \mathrm{F}_0 \psi - \mu \epsilon \zeta (1+h)(\frac{z^2}{2}+z)\Delta\psi = \phi + O(\mu^2\epsilon).
\end{align}
Moreover using the first approximation of \eqref{Fourier Multipliers Expansions} we get a second approximation of $\phi$:
\begin{align}\label{Second approximation of phi}
    \widetilde{\phi}_{app}= \psi + h^2(\mathrm{F}_0-1)\psi = \phi + O(\mu^2\epsilon).
\end{align}
\end{proof}

\noindent \underline{Step 2}: The second step is to use both approximations $\phi_{\appr}$ and $\widetilde{\phi}_{\appr}$ to get approximations of the vertically averaged horizontal velocity $\overline{V}$ at order $O(\mu^2\epsilon)$.
\begin{mypp}\label{overlineV formal approximations}
    Let $\mathrm{F_1} \colonequals \frac{\tanh{(\sqrt{\mu}|D|)}}{\sqrt{\mu}|D|}$ and $\mathrm{F_2} = \frac{3}{\mu|D|^2}(1- \frac{\tanh{(\sqrt{\mu}|D|)}}{\sqrt{\mu}|D|})$ be two Fourier multipliers. Let $\overline{V}$ be the vertically averaged horizontal velocity. We have the formal expansions:
    \begin{align*}
        \begin{cases}
                \overline{V}_{\appr} \colonequals \mathrm{F}_1\nabla\psi + \frac{\mu \epsilon}{3}[h \nabla\zeta \Delta\psi + \nabla(\zeta(1+h)\Delta\psi)] = \overline{V} + O(\mu^2\epsilon), \\
                \widetilde{\overline{V}}_{app} \colonequals \nabla\psi + \frac{\mu}{3h}\nabla(h^3 \mathrm{F}_2 \Delta\psi) = \overline{V} + O(\mu^2\epsilon).
        \end{cases} 
    \end{align*}
\end{mypp}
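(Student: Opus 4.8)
The plan is to substitute the two explicit approximate potentials $\phi_{\appr}$ and $\widetilde{\phi}_{\appr}$ furnished by Lemma~\ref{Formal expansions of phi} into the defining formula \eqref{VerticallyAveragedHorizontalVelocity} for $\overline{V}$. Since the surface variable $\zeta$ is frozen here, \eqref{VerticallyAveragedHorizontalVelocity} is a \emph{linear} map of the potential, $\phi\mapsto\frac1h\int_{-1}^0[h\nabla\phi-\epsilon(z+1)\nabla\zeta\,\partial_z\phi]\,\dz$; therefore $\phi=\phi_{\appr}+O(\mu^2\epsilon)$ (respectively $\phi=\widetilde{\phi}_{\appr}+O(\mu^2\epsilon)$) gives, at the formal level, $\overline{V}=\overline{V}[\phi_{\appr}]+O(\mu^2\epsilon)$ (respectively with $\widetilde{\phi}_{\appr}$), the fact that this averaging does not spoil the order being exactly what is turned into an estimate in subsection~\ref{Rigorous expansions}. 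It then remains to evaluate $\overline{V}[\phi_{\appr}]$ and $\overline{V}[\widetilde{\phi}_{\appr}]$ explicitly, and the whole computation rests on two elementary vertical integrals: with $a\colonequals\sqrt{\mu}|D|$ one has $\int_{-1}^0\mathrm{F}_0\,\dz=\tfrac{\tanh a}{a}=\mathrm{F}_1$ and, after one integration by parts, $\int_{-1}^0(z+1)\,\partial_z(\mathrm{F}_0\psi)\,\dz=(1-\mathrm{F}_1)\psi$; I shall also use $\int_{-1}^0(\tfrac{z^2}{2}+z)\,\dz=-\tfrac13$, $\int_{-1}^0(z+1)^2\,\dz=\tfrac13$, together with the two algebraic identities $h^2=1+\epsilon\zeta(1+h)$ and $\tfrac1h\nabla(h^3g)=\nabla(h^2g)+\epsilon h\,\nabla\zeta\,g$.

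For the first expansion I insert $\phi_{\appr}=\mathrm{F}_0\psi-\mu\epsilon\,\zeta(1+h)(\tfrac{z^2}{2}+z)\Delta\psi$. The term $h\nabla\phi$ in \eqref{VerticallyAveragedHorizontalVelocity}, once divided by $h$, becomes $\int_{-1}^0\nabla\phi_{\appr}\,\dz$, whose two pieces produce $\mathrm{F}_1\nabla\psi$ and $\tfrac{\mu\epsilon}{3}\nabla(\zeta(1+h)\Delta\psi)$. The term $-\epsilon(z+1)\nabla\zeta\,\partial_z\phi$ splits the same way: the $\mathrm{F}_0\psi$ part yields $-\tfrac{\epsilon}{h}\nabla\zeta\,(1-\mathrm{F}_1)\psi$, which using the expansion $(1-\mathrm{F}_1)\psi=-\tfrac{\mu}{3}\Delta\psi+O(\mu^2)$ (this is where the Fourier multiplier expansions of Proposition~\ref{FourierMultiplierApproximations}, equivalently $\mathrm{F}_2=1+O(\mu)$, enter) equals $\tfrac{\mu\epsilon}{3h}\nabla\zeta\,\Delta\psi+O(\mu^2\epsilon)$, while the $\mu\epsilon\phi_1$ part contributes the exact quantity $\tfrac{\mu\epsilon^2}{3h}(1+h)\zeta\,\nabla\zeta\,\Delta\psi$. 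Here lies the one point demanding attention: although this last contribution is nominally only $O(\mu\epsilon^2)$ it must not be discarded, because summed with the previous one and simplified by $1+\epsilon\zeta(1+h)=h^2$ it precisely promotes the coefficient $\tfrac1h$ to $h$, producing $\tfrac{\mu\epsilon}{3}h\,\nabla\zeta\,\Delta\psi+O(\mu^2\epsilon)$. Adding the three contributions gives $\overline{V}_{\appr}+O(\mu^2\epsilon)$.

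For the second expansion I insert $\widetilde{\phi}_{\appr}=\psi+h^2(\mathrm{F}_0-1)\psi$; this case is cleaner, as no truncation of $\mathrm{F}_1$ is needed. From $\partial_z\widetilde{\phi}_{\appr}=h^2\,\partial_z(\mathrm{F}_0\psi)$ the term $-\epsilon(z+1)\nabla\zeta\,\partial_z\phi$ gives $-\epsilon h\,\nabla\zeta\,(1-\mathrm{F}_1)\psi$, and keeping $(1-\mathrm{F}_1)=-\tfrac{\mu|D|^2}{3}\mathrm{F}_2=\tfrac{\mu}{3}\mathrm{F}_2\Delta$ exactly (the definition of $\mathrm{F}_2$) turns it into $\tfrac{\mu\epsilon}{3}h\,\nabla\zeta\,\mathrm{F}_2\Delta\psi$, whereas the term $h\nabla\phi$ divided by $h$ gives $\nabla\psi+\nabla(h^2(\mathrm{F}_1-1)\psi)=\nabla\psi+\tfrac{\mu}{3}\nabla(h^2\mathrm{F}_2\Delta\psi)$. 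Summing and applying $\tfrac1h\nabla(h^3g)=\nabla(h^2g)+\epsilon h\,\nabla\zeta\,g$ with $g=\mathrm{F}_2\Delta\psi$ collapses these into $\nabla\psi+\tfrac{\mu}{3h}\nabla(h^3\mathrm{F}_2\Delta\psi)=\widetilde{\overline{V}}_{\appr}$, the only error being the $O(\mu^2\epsilon)$ inherited from $\phi-\widetilde{\phi}_{\appr}$ in Lemma~\ref{Formal expansions of phi}. The genuine work is thus the vertical bookkeeping just described, and the only delicate moment is recognizing that the $O(\mu\epsilon^2)$ term in the first expansion is exactly what is needed to recombine the $\zeta$-dependent prefactors into $h$ and $1+h$; everything else is either exact algebra or a direct appeal to Proposition~\ref{FourierMultiplierApproximations}.
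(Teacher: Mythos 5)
Your proposal is correct and follows essentially the same route as the paper: substitute $\phi_{\appr}$ and $\widetilde{\phi}_{\appr}$ into \eqref{Expression of Vbar}, compute the vertical integrals, and invoke the expansion $(1-\mathrm{F}_1)\psi=-\tfrac{\mu}{3}\Delta\psi+O(\mu^2)$ (resp.\ the exact definition of $\mathrm{F}_2$) to reach $\overline{V}_{\appr}$ (resp.\ $\widetilde{\overline{V}}_{\appr}$). Your explicit recombination of the nominally $O(\mu\epsilon^2)$ term with $\tfrac{\mu\epsilon}{3h}\nabla\zeta\,\Delta\psi$ via $1+\epsilon\zeta(1+h)=h^2$ is exactly the step the paper performs implicitly when passing from $\overline{V}_{\appr,0}$ to $\overline{V}_{\appr}$.
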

\begin{proof}
For that purpose, we use the expression of $\overline{V}$ in term of the velocity potential $\psi$ (see definition \ref{Definitions Fondamentales}), that is:
\begin{align}\label{Expression of Vbar}
    \overline{V} = \frac{1}{h} \int_{-1}^{0} [h\nabla\phi - \epsilon(z+1)\nabla\zeta\partial_z\phi] \dz.
\end{align}
Replacing $\phi$ by $\phi_{app}$ in it, we get an approximation $\overline{V}_{\appr,0}$ of $\overline{V}$:
\begin{align*}
   \overline{V}_{\appr,0} = & \frac{\tanh(\sqrt{\mu}|D|)}{\sqrt{\mu}|D|} \nabla\psi - \epsilon \frac{\nabla\zeta}{h}(\psi-\frac{\tanh(\sqrt{\mu}|D|)}{\sqrt{\mu}|D|}\psi) \\
    + & \frac{\mu \epsilon}{3}[\frac{\nabla\zeta}{h} \epsilon \zeta(1+h) \Delta\psi + \nabla(\zeta(1+h)\Delta\psi)] = \overline{V} + O(\mu^2\epsilon). 
\end{align*}
Moreover remark that formally we also have the following expansion 
\begin{align*}
    \psi-\frac{\tanh(\sqrt{\mu}|D|)}{\sqrt{\mu}|D|}\psi = -\frac{\mu}{3}\Delta\psi + O(\mu^2).
\end{align*}
Using it we get a new approximation of $\overline{V}$, denoted $\overline{V}_{\appr}$:
\begin{align}\label{first approximation of bar V}
    \overline{V}_{\appr} = \mathrm{F}_1\nabla\psi + \frac{\mu \epsilon}{3}[h \nabla\zeta \Delta\psi + \nabla(\zeta(1+h)\Delta\psi)] = \overline{V} + O(\mu^2\epsilon),
\end{align}
where $\mathrm{F}_1 = \frac{\tanh(\sqrt{\mu}|D|)}{\sqrt{\mu}|D|}$.

Replacing $\phi$ by $\widetilde{\phi}_{app}$ in \eqref{Expression of Vbar} we get another useful approximation of $\overline{V}$ denoted $\widetilde{\overline{V}}_{\appr}$:
\begin{align}\label{VbarTildeApp expression}
   \widetilde{\overline{V}}_{\appr}
    = \nabla\psi + \frac{1}{h} \nabla(h^3(\frac{\tanh(\sqrt{\mu}|D|)}{\sqrt{\mu}|D|}-1)\psi).
\end{align}
Let $\mathrm{F}_2$ be the Fourier multiplier such that
\begin{align}\label{DefinitionF_2}
    \frac{\tanh(\sqrt{\mu}|D|)}{\sqrt{\mu}|D|}-1 = -\frac{\mu}{3}|D|^2 \mathrm{F}_2,
\end{align}
then we write
\begin{align}\label{second approximation of V bar}
    \widetilde{\overline{V}}_{app} = \nabla\psi + \frac{\mu}{3h}\nabla(h^3 \mathrm{F}_2 \Delta\psi) = \overline{V} + O(\mu^2\epsilon). 
\end{align}
\end{proof}

\noindent \underline{Step 3}: We construct approximations of $\nabla\psi$ in terms of $\overline{V}$ at order $O(\mu^2\epsilon)$.
\begin{mypp}
    Let $\mathrm{F_3} = \mathrm{F_2}\circ\mathrm{F_1}^{-1}$ be a Fourier multiplier where $\mathrm{F}_1$ and $\mathrm{F}_2$ are defined in proposition \ref{overlineV formal approximations}. Then we have the formal expansion:
    \begin{align*}
            \nabla\psi= \overline{V} - \frac{\mu}{3h}\nabla(h^3 \mathrm{F}_3 \nabla \cdot \overline{V}) + O(\mu^2\epsilon).
    \end{align*}
\end{mypp}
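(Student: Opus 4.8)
The plan is to invert, modulo the required accuracy, the second relation of Proposition~\ref{overlineV formal approximations}. That proposition gives, formally,
\[
\overline{V} = \nabla\psi + \frac{\mu}{3h}\nabla\!\big(h^3 \mathrm{F}_2 \Delta\psi\big) + O(\mu^2\epsilon),
\qquad\text{hence}\qquad
\nabla\psi = \overline{V} - \frac{\mu}{3h}\nabla\!\big(h^3 \mathrm{F}_2 \Delta\psi\big) + O(\mu^2\epsilon).
\]
The right-hand side still contains $\psi$ through the correction term, but that term carries a prefactor $\mu$; so it is enough to determine $\Delta\psi$ modulo $O(\mu\epsilon)$ in terms of $\overline{V}$, and the resulting error, after multiplication by $\mu$, will be absorbed into $O(\mu^2\epsilon)$.

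First I would use the leading-order expansion $\overline{V} = \mathrm{F}_1\nabla\psi + O(\mu\epsilon)$, which is read off from the first identity of Proposition~\ref{overlineV formal approximations} since the correction there is $O(\mu\epsilon)$. The symbol $\mathrm{F}_1(\xi) = \tanh(\sqrt\mu|\xi|)/(\sqrt\mu|\xi|)$ is a smooth, even, strictly positive function taking values in $(0,1]$, and its reciprocal $1/\mathrm{F}_1(\xi) = \sqrt\mu|\xi|/\tanh(\sqrt\mu|\xi|)$ is smooth (it is a smooth function of $|\xi|^2$), bounded below by $1$, and grows linearly in $|\xi|$; hence $\mathrm{F}_1$ is an invertible Fourier multiplier whose inverse $\mathrm{F}_1^{-1}$ is of order $1$. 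Applying $\mathrm{F}_1^{-1}$ gives $\nabla\psi = \mathrm{F}_1^{-1}\overline{V} + O(\mu\epsilon)$, and taking the divergence (which commutes with any Fourier multiplier) yields
\[
\Delta\psi = \mathrm{F}_1^{-1}\,\nabla\cdot\overline{V} + O(\mu\epsilon).
\]

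Then I would substitute this into the correction term. Since $\mathrm{F}_2$ is a bounded Fourier multiplier and, by definition, $\mathrm{F}_3 = \mathrm{F}_2\circ\mathrm{F}_1^{-1}$, we get $\mathrm{F}_2\Delta\psi = \mathrm{F}_3\,\nabla\cdot\overline{V} + O(\mu\epsilon)$, so
\[
\frac{\mu}{3h}\nabla\!\big(h^3 \mathrm{F}_2 \Delta\psi\big) = \frac{\mu}{3h}\nabla\!\big(h^3 \mathrm{F}_3\,\nabla\cdot\overline{V}\big) + O(\mu^2\epsilon),
\]
the remainder absorbing $\tfrac{\mu}{3h}\nabla(h^3\cdot O(\mu\epsilon))$ — multiplication by the smooth coefficients $h^3$ and $1/h$ and one gradient only shift the Sobolev index, not the $\mu^2\epsilon$ scaling. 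Combining this with the first display yields $\nabla\psi = \overline{V} - \frac{\mu}{3h}\nabla(h^3 \mathrm{F}_3\,\nabla\cdot\overline{V}) + O(\mu^2\epsilon)$, which is the claim.

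There is no real obstacle at the formal level — the argument is pure algebra together with the two expansions already established. The one point to keep in mind is that $\mathrm{F}_1^{-1}$ is genuinely unbounded: it behaves like $\Lambda$ at high frequency, so inverting $\mathrm{F}_1$ costs one derivative, and this loss propagates through every subsequent error term. This is precisely why, in the rigorous counterpart (fourth estimate of Proposition~\ref{Proposition Approximation Vbar estimates}), one has to pay one extra derivative compared with the other three estimates (the factors $M(s+4)$ and $|\nabla\psi|_{H^{s+4}}$). The careful Sobolev bookkeeping making all the $O(\cdot)$ statements above precise is deferred to Subsection~\ref{Rigorous expansions}.
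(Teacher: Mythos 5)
Your proposal is correct and follows essentially the same route as the paper: the paper's (very terse) proof is exactly the substitution of the leading-order relation $\overline{V}=\mathrm{F}_1\nabla\psi+O(\mu\epsilon)$, i.e. $\Delta\psi=\mathrm{F}_1^{-1}\nabla\cdot\overline{V}+O(\mu\epsilon)$, into the $O(\mu)$ correction term of the second expansion of Proposition \ref{overlineV formal approximations}, which is what you do. Your added remarks on the unboundedness of $\mathrm{F}_1^{-1}$ and the resulting derivative loss correctly anticipate the extra regularity ($M(s+4)$, via the bound $F_1^{-1}(\sqrt{\mu}|\xi|)\leq 1+\sqrt{\mu}|\xi|$) used in the rigorous version in Subsection \ref{Rigorous expansions}.
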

\begin{proof}
Using \eqref{first approximation of bar V} in \eqref{second approximation of V bar} we obtain
\begin{align*}
    \nabla\psi= \overline{V} - \frac{\mu}{3h}\nabla(h^3 \mathrm{F}_2 \mathrm{F}_1^{-1} \nabla \cdot \overline{V}) + O(\mu^2\epsilon).
\end{align*}
Let $\mathrm{F}_3$ be the Fourier multiplier defined by $\mathrm{F}_3 \colonequals \mathrm{F}_2 \mathrm{F}_1^{-1}$. Then
\begin{align}\label{approximation of phi in terms of V bar}
    \nabla\psi= \overline{V} - \frac{\mu}{3h}\nabla(h^3 \mathrm{F}_3 \nabla \cdot \overline{V}) + O(\mu^2\epsilon).
\end{align}
\end{proof}
\begin{myrem}
From \eqref{first approximation of bar V}, \eqref{second approximation of V bar} and \eqref{approximation of phi in terms of V bar} we obtain formally the approximations displayed in proposition \ref{Proposition Approximation Vbar estimates}. \\
Again the rigorous proof is given in the following subsection.
\end{myrem}

\begin{myrem}
    The simplifications given by \eqref{Fourier Multipliers Expansions} allowed us to write simple approximations. Omitting this step of simplification in \underline{Step 1}, we introduced the beginning of an iterative process which allows to construct approximations of order $O(\mu^k\epsilon)$ for any $k \in \mathbb{N}^*$.
\end{myrem}

\label{Formal construction}

\subsection{Rigorous expansions}
\noindent In this subsection we prove rigorously the estimations of proposition \ref{Proposition Approximation Vbar estimates}.

\noindent For convenience we rewrite proposition \ref{Proposition Approximation Vbar estimates} using \eqref{first approximation of bar V}, \eqref{second approximation of V bar} and \eqref{approximation of phi in terms of V bar}.

\begin{mypp}\label{Proposition Approximation Vbar estimates with VbarApprox}
    Let $s \geq 0$, and $\zeta \in H^{s+4}(\mathbb{R}^d)$ be such that \ref{FundamentalHypothesis} is satisfied. Let $\psi \in \dot{H}^{s+5}(\mathbb{R}^d)$, and $\overline{V}$ be as in \eqref{VerticallyAveragedHorizontalVelocity}. Let $\overline{V}_{\appr}$ be as in \eqref{first approximation of bar V} and $\widetilde{\overline{V}}_{\appr}$ be as in \eqref{second approximation of V bar}. Let also $\mathrm{F}_1 \colonequals \frac{\tanh{(\sqrt{\mu}|D|)}}{\sqrt{\mu}|D|}$, $\mathrm{F}_2 \colonequals \frac{3}{\mu|D|}(1- \frac{\tanh{(\sqrt{\mu}|D|)}}{\sqrt{\mu}|D|})$, and $\mathrm{F}_3 = \mathrm{F}_2\mathrm{F}_1^{-1}$ be three Fourier multipliers.\\
    The following estimates hold:
    \begin{align}\label{AsyptoticDevelopmentVbar}
        \begin{cases}
            |\overline{V}-\mathrm{F}_1\nabla\psi|_{H^{s}} \leq \mu\epsilon M(s+3)|\nabla\psi|_{H^{s+2}},\\
            |\overline{V}-\overline{V}_{\appr}|_{H^s} \leq \mu^2\epsilon M(s+3)|\nabla\psi|_{H^{s+4}}, \\
            |\overline{V}-\widetilde{\overline{V}}_{\appr}|_{H^{s}} \leq \mu^2\epsilon M(s+3) |\nabla\psi|_{H^{s+4}},\\
            |\overline{V}-\nabla\psi -\frac{\mu}{3h}\nabla(h^3 \mathrm{F}_3 \nabla \cdot \overline{V})|_{H^s} \leq \mu^2\epsilon M(s+4) |\nabla\psi|_{H^{s+4}}.
        \end{cases}
    \end{align}
\end{mypp}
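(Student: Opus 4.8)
The strategy is to turn the formal relations of Lemma \ref{Formal expansions of phi} and Proposition \ref{overlineV formal approximations} into quantitative bounds by using that $\phi_0=\mathrm{F}_0\psi$, $\phi_{\appr}$ and $\widetilde{\phi}_{\appr}$ are \emph{explicit} near-solutions of the straightened Laplace problem \eqref{Laplace Problem in the flat strip}. First I would estimate $\phi$ minus each of these approximate potentials in the energy norm $\lVert\Lambda^s\nabla^{\mu}\cdot\rVert_2$ by invoking the elliptic estimate for \eqref{Laplace Problem in the flat strip} (Lemma \ref{lemma 3.43}), whose coercivity hypothesis on $P(\Sigma_t)$ is furnished by the non-cavitation assumption \eqref{FundamentalHypothesis}; then I would propagate these bounds to $\overline{V}$ through the averaging functional \eqref{Expression of Vbar}, which is linear and continuous in the norms at hand. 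Carrying this out with $\phi_0$, $\phi_{\appr}$ and $\widetilde{\phi}_{\appr}$ produces, after identifying the resulting quantities with the ones in the statement, the first, second and third estimates; the fourth is then deduced from the first and third.

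The crux is the residual estimate. For $w=\phi-\phi_{\appr}$ the boundary conditions are homogeneous ($w|_{z=0}=0$, $\partial_z w|_{z=-1}=0$), since the corrections in $\phi_{\appr}$ vanish at $z=0$ and have vanishing $z$-derivative at $z=-1$, so Lemma \ref{lemma 3.43} reduces everything to bounding $h\nabla^{\mu}\cdot P(\Sigma_t)\nabla^{\mu}\phi_{\appr}$ in the appropriate divergence-form dual norm. Using the splitting \eqref{Decomposition straightened Laplacian}, $h\nabla^{\mu}\cdot P\nabla^{\mu}=(\partial_z^2+\mu\Delta)+\mu\epsilon A(\nabla,\partial_z)$, together with $(\partial_z^2+\mu\Delta)\mathrm{F}_0\psi=0$ and the defining relation $\partial_z^2\phi_1=-A[\mathrm{F}_0\psi]$ of \eqref{Laplace problem for phi1}, the $O(1)$ and $O(\mu\epsilon)$ contributions cancel and one is left with $\mu\epsilon\,\partial_z^2\bigl(\phi_1+\zeta(1+h)(\tfrac{z^2}{2}+z)\Delta\psi\bigr)$ plus terms carrying an explicit extra $\mu$ or $\epsilon$; the first of these is $O(\mu^2\epsilon)$ precisely because $\phi_1$ differs from its simplified form by $O(\mu)$, which is the content of the quantitative Fourier-multiplier expansions of Proposition \ref{FourierMultiplierApproximations} (the rigorous form of \eqref{Fourier Multipliers Expansions}). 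Estimating everything uniformly in $z\in[-1,0]$, with $z$-integrations handled by Minkowski's inequality and nonlinear terms by tame product estimates, yields the bound $O(\mu^2\epsilon)$ and the regularity indices stated. For $\widetilde{\phi}_{\appr}$ I would avoid a second residual computation and instead use the algebraic identity $\widetilde{\phi}_{\appr}-\phi_{\appr}=\epsilon\zeta(1+h)\bigl[(\mathrm{F}_0-1)\psi+\mu(\tfrac{z^2}{2}+z)\Delta\psi\bigr]$, whose bracket is $O(\mu^2)$ by \eqref{Fourier Multipliers Expansions}, so that $\widetilde{\phi}_{\appr}-\phi_{\appr}=O(\mu^2\epsilon)$ in energy norm.

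Deducing the four bounds is then mostly routine. For the first, substituting $\mathrm{F}_0\psi$ into \eqref{Expression of Vbar} gives exactly $\mathrm{F}_1\nabla\psi-\epsilon\frac{\nabla\zeta}{h}(\psi-\mathrm{F}_1\psi)$, whose last term is $O(\mu\epsilon)$ since $|1-\mathrm{F}_1(\xi)|\lesssim\mu|\xi|^2$; combined with $\phi=\mathrm{F}_0\psi+O(\mu\epsilon)$ this gives the claimed bound. For the second and third, substituting $\phi_{\appr}$ into \eqref{Expression of Vbar} reproduces the quantity $\overline{V}_{\appr,0}$ of Proposition \ref{overlineV formal approximations}, which equals $\overline{V}_{\appr}+O(\mu^2\epsilon)$ by $\psi-\mathrm{F}_1\psi=-\tfrac{\mu}{3}\Delta\psi+O(\mu^2)$ and the identity $1+\epsilon\zeta(1+h)=h^2$, while substituting $\widetilde{\phi}_{\appr}$ reproduces $\widetilde{\overline{V}}_{\appr}$ exactly. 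The fourth follows by subtraction: the third bound reads $\overline{V}=\nabla\psi+\frac{\mu}{3h}\nabla(h^3\mathrm{F}_2\Delta\psi)+O(\mu^2\epsilon)$, and since $\mathrm{F}_3=\mathrm{F}_2\mathrm{F}_1^{-1}$ the residual discrepancy is $-\frac{\mu}{3h}\nabla\bigl(h^3\mathrm{F}_3\nabla\cdot(\overline{V}-\mathrm{F}_1\nabla\psi)\bigr)$, which the first bound controls by $\mu\cdot\mu\epsilon$; because $\mathrm{F}_3\nabla\cdot$ (equivalently $\mathrm{F}_1^{-1}$) is less regularizing than the other multipliers involved, one must apply the first bound one derivative higher, which is the source of $M(s+4)$ rather than $M(s+3)$.

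I expect the main obstacle to be the residual estimate of the second paragraph: arranging $h\nabla^{\mu}\cdot P(\Sigma_t)\nabla^{\mu}\phi_{\appr}$ so that every surviving term visibly carries a factor $\mu^2\epsilon$ in a norm the elliptic estimate can absorb, and in particular quantifying the simplification error $\phi_1+\zeta(1+h)(\tfrac{z^2}{2}+z)\Delta\psi$ through the rigorous Fourier-multiplier asymptotics while keeping the transversal parameter $z$ uniformly under control. The remainder is the routine but delicate bookkeeping of the derivatives lost in \eqref{Expression of Vbar}, in the corrections $\frac{\mu}{3h}\nabla(h^3\cdots)$, and in $\mathrm{F}_1^{-1}$, so that the regularity indices come out exactly as stated.
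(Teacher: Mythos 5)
Your proposal is correct and follows essentially the same route as the paper: explicit approximate potentials whose residuals in the straightened Laplace problem are controlled via Lemma \ref{lemma 3.43} (the key cancellation being $\partial_z^2$ of the corrector against the leading part $\zeta(1+h)\Delta\psi$ of $A[\mathrm{F}_0\psi]$, quantified by Proposition \ref{FourierMultiplierApproximations}), the Poincar\'e/Jensen step to transfer the energy bounds to $\overline{V}$, and the fourth estimate obtained from the first and third with the loss to $M(s+4)$ coming from $\mathrm{F}_1^{-1}$. The only (harmless) deviation is that you deduce the $\widetilde{\phi}_{\appr}$ bound from the $\phi_{\appr}$ one via the identity $h^2-1=\epsilon\zeta(1+h)$, whereas the paper computes the residual of $\widetilde{\phi}_{\appr}$ directly and treats $\phi_{\appr}$ "similarly".
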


\indent 

\noindent Every steps in the previous subsection can be justified rigorously. But we won't follow the same path. We will mainly use the fact that we have explicit candidates for each approximations. It will give us sharper estimations.

\noindent \underline{Step 1}: The first step is to write and prove a rigorous version of lemma \ref{Formal expansions of phi}.
\begin{mypp}
    Let $t_0 > d/2$, and $s \geq 0$. Let also $\phi_{\appr}$ and $\widetilde{\phi}_{\appr}$ be defined in lemma \ref{Formal expansions of phi}. We have the following estimates:
    \begin{align*}
        \begin{cases}
            ||\Lambda^s\nabla^{\mu}(\phi-\widetilde{\phi}_{\appr})||_2 \leq \mu^2\epsilon M(s+2)|\nabla\psi|_{H^{s+3}}, \\
            ||\Lambda^s\nabla^{\mu}(\phi-\phi_{\appr})||_2 \leq \mu^2\epsilon M(s+2)|\nabla\psi|_{H^{s+3}}.
        \end{cases}
    \end{align*}
\end{mypp}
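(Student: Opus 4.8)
The plan is to prove the two $H^1$-type estimates on the velocity potential by the standard energy/variational method for the straightened elliptic problem \eqref{Laplace Problem in the flat strip}, exploiting the fact that $\phi_{\appr}$ and $\widetilde{\phi}_{\appr}$ are \emph{explicit} quasi-solutions rather than redoing the multiscale expansion. First I would record the coercivity estimate: writing $\mathfrak{a}(u,v) = \int_{\mathcal{S}} \nabla^{\mu}u \cdot P(\Sigma_t) \nabla^{\mu}v$, the matrix $P(\Sigma_t)$ is symmetric positive definite uniformly in $(\epsilon,\mu)\in\mathcal{A}$ with bounds controlled by $M_0$ (this is the classical ellipticity estimate, e.g. Lemma 2.xx / Chapter 2 of \cite{WWP}), so $\mathfrak{a}(u,u) \gtrsim \frac{1}{M_0}\|\nabla^{\mu}u\|_2^2$ for $u$ vanishing at $z=0$. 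For $s=0$ the argument is then: set $r \colonequals \phi - \phi_{\appr}$ (resp. $\phi - \widetilde{\phi}_{\appr}$), which vanishes at $z=0$ and whose normal derivative at $z=-1$ vanishes up to the error already controlled; since $\nabla^{\mu}\cdot P(\Sigma_t)\nabla^{\mu}\phi = 0$, testing against $r$ gives $\mathfrak{a}(r,r) = -\mathfrak{a}(\phi_{\appr},r)$ (resp. with $\widetilde\phi_{\appr}$), and one integrates by parts to put $\mathfrak{a}(\phi_{\appr},r) = \int_{\mathcal{S}} (\nabla^{\mu}\cdot P(\Sigma_t)\nabla^{\mu}\phi_{\appr})\, r + \text{boundary term at } z=-1$. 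The point of Step 1 of the formal construction is precisely that $\phi_{\appr}$ was built so that $h\,\nabla^{\mu}\cdot P(\Sigma_t)\nabla^{\mu}\phi_{\appr}$, via the decomposition \eqref{Decomposition straightened Laplacian} and the simplifications \eqref{Fourier Multipliers Expansions}, equals a remainder of order $O(\mu^2\epsilon)$; using Cauchy--Schwarz (or Poincaré in $z$, since $r|_{z=0}=0$, to absorb $\|r\|_2 \lesssim \|\partial_z r\|_2 \leq \|\nabla^{\mu}r\|_2$) one closes $\|\nabla^{\mu}r\|_2 \leq \mu^2\epsilon\, M(2)\,|\nabla\psi|_{H^{3}}$.

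The main work is therefore the \textbf{explicit remainder estimate}: one must verify that $h\,\nabla^{\mu}\cdot P(\Sigma_t)\nabla^{\mu}\phi_{\appr} = (\partial_z^2 + \mu\Delta)(\phi_0 + \mu\epsilon\phi_1) + \mu\epsilon A(\nabla,\partial_z)[\phi_0+\mu\epsilon\phi_1]$, where $\phi_1 = -\zeta(1+h)(\tfrac{z^2}{2}+z)\Delta\psi$ is the \emph{simplified} profile, not the exact solution of \eqref{Laplace problem for phi1}. Since $\phi_0$ exactly solves $(\partial_z^2+\mu\Delta)\phi_0 = 0$ and $\phi_1$ only solves $\partial_z^2\phi_1 = -A(\nabla,\partial_z)[\phi_0]$ \emph{up to} $O(\mu)$ — this is where \eqref{Fourier Multipliers Expansions} enters, since $\phi_1$ was obtained from the true antiderivative of $A[\phi_0]$ by dropping $O(\mu)$ corrections to the Fourier multipliers — the leftover terms are: (i) $\mu^2\epsilon\Delta\phi_1$ (genuinely $O(\mu^2\epsilon)$ after $\phi_1$ is $z$-polynomial times $\zeta$-dependent coefficients), (ii) $\mu\epsilon\big(\partial_z^2\phi_1 + A(\nabla,\partial_z)[\phi_0]\big)$, which is $\mu\epsilon \times O(\mu) = O(\mu^2\epsilon)$ by the multiplier expansions, and (iii) $\mu^2\epsilon^2 A(\nabla,\partial_z)[\phi_1]$, also $O(\mu^2\epsilon)$. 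Each of these has to be bounded in $\|\Lambda^s\,\cdot\,\|_{L^2(\mathcal{S})}$ by $\mu^2\epsilon\,M(s+2)|\nabla\psi|_{H^{s+3}}$; the product estimates in $H^s$ (Moser-type, with $\zeta\in H^{t_0+1}$ or higher for the coefficients) and the rigorous versions of \eqref{Fourier Multipliers Expansions} from Proposition \ref{FourierMultiplierApproximations} do exactly this. The derivative count (why $s+2$ on $\zeta$ and $s+3$ on $\nabla\psi$) comes from: two extra $z$- or $X$-derivatives land on $\phi_0$ through $(\partial_z^2+\mu\Delta)$ and $A$, and $\phi_0 = \mathrm{F}_0\psi$ with $\mathrm{F}_0$ an order-zero multiplier, so $\|\Lambda^s\nabla^{\mu}\mathrm{F}_0\psi\|_2 \lesssim |\nabla\psi|_{H^s}$, plus one derivative lost to $\Delta$ in $\phi_1$.

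For $\widetilde{\phi}_{\appr} = \psi + h^2(\mathrm{F}_0-1)\psi$ I would argue identically: $\widetilde{\phi}_{\appr} - \phi_{\appr} = h^2(\mathrm{F}_0-1)\psi + \mu\epsilon\zeta(1+h)(\tfrac{z^2}{2}+z)\Delta\psi - \mathrm{F}_0\psi + \psi$, and by the first expansion in \eqref{Fourier Multipliers Expansions}, $\frac{1-\mathrm{F}_0}{\mu|D|^2} = -\tfrac{z^2}{2}-z + O(\mu)$, so $(\mathrm{F}_0-1)\psi = \mu(\tfrac{z^2}{2}+z)\Delta\psi + O(\mu^2)|\nabla\psi|$; substituting and using $h^2 = (1+\epsilon\zeta)^2 = 1 + \epsilon(2\zeta + \epsilon\zeta^2)$ and $h^2 \cdot(-\text{something}) $ one checks the difference is $O(\mu^2\epsilon)$ directly in $\|\Lambda^s\nabla^{\mu}\cdot\|_2$, so the estimate for $\widetilde\phi_{\appr}$ follows from the one for $\phi_{\appr}$ plus a triangle inequality — alternatively test the elliptic problem against $\phi - \widetilde{\phi}_{\appr}$ directly, which is cleaner. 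The step I expect to be the genuine obstacle is the bookkeeping in (ii) above: making the heuristic ``$\partial_z^2\phi_1 + A[\phi_0] = O(\mu)$'' into a rigorous $H^s(\mathcal{S})$ bound requires carefully tracking that the discarded pieces all carry the factor $\tanh(\sqrt\mu|D|)/(\sqrt\mu|D|) - 1$ or $\frac{z+1}{\sqrt\mu|D|}\frac{\sinh((z+1)\sqrt\mu|D|)}{\cosh(\sqrt\mu|D|)} - (z+1)^2$, each of which Proposition \ref{FourierMultiplierApproximations} controls by $\mu$ times an order-$(-2)$ (hence bounded after composing with one $|D|^2$) multiplier — uniformly in $z\in[-1,0]$ — so that no spurious power of $|D|$ survives and the Sobolev indices come out as claimed. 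Everything else is routine product estimates and integration by parts in the strip.
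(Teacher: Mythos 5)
Your proposal is correct and follows essentially the same route as the paper: compute the residual $h\nabla^{\mu}\cdot P(\Sigma_t)\nabla^{\mu}\phi_{\appr}$ (resp.\ $\widetilde{\phi}_{\appr}$) explicitly, bound it by $\mu^2\epsilon M(s+2)|\nabla\psi|_{H^{s+3}}$ via the product estimates and the symbol bounds on $\mathrm{F}_0$, and transfer this to $\|\Lambda^s\nabla^{\mu}(\phi-\phi_{\appr})\|_2$ through the elliptic estimate for the homogeneous boundary value problem satisfied by the difference (your energy/coercivity argument is precisely the content of lemma \ref{lemma 3.43}, which the paper invokes directly and which you should cite for $s>0$ since commuting $\Lambda^s$ with the variable-coefficient operator is not free). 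The only presentational difference is that the paper details the residual computation for $\widetilde{\phi}_{\appr}$ using the identity $\widetilde{\phi}_{\appr}=\mathrm{F}_0\psi+(h^2-1)(\mathrm{F}_0-1)\psi$ and treats $\phi_{\appr}$ as ``similar,'' whereas you do the reverse.
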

\begin{proof}
We begin by computing the straightened Laplacian of $\widetilde{\phi}_{\appr}$, $h\nabla^{\mu}\cdot P(\Sigma_t)\nabla^{\mu}\widetilde{\phi}_{\appr}$ (see definition \ref{Definitions Fondamentales} for the expression of $P(\Sigma_t)$). Let $\mathrm{F}_0$ be the Fourier multiplier $\frac{\cosh{((z+1)\sqrt{\mu}|D|)}}{\cosh{(\sqrt{\mu}|D|)}}$ for any $z \in [-1,0]$. Recalling $\widetilde{\phi}_{\appr} = \psi + h^2(\mathrm{F}_0-1)\psi = \mathrm{F}_0\psi + (h^2-1)(\mathrm{F}_0-1)\psi$ and  $\partial_z^2 \phi_0 + \mu\Delta\phi_0 = 0$ (see \eqref{Laplace problem for phi0}), we get
\begin{align*}
    h\nabla^{\mu}\cdot P(\Sigma_t)\nabla^{\mu}\widetilde{\phi}_{\appr} = &\mu(h^2-1)(\mathrm{F}_0-1)\Delta\psi + \mu\Delta((h^2-1)(\mathrm{F}_0-1)\psi) \\
    + &\mu\epsilon A(\nabla,\partial_z) (\mathrm{F}_0-1)\psi + \mu\epsilon A(\nabla,\partial_z)(h^2-1)(\mathrm{F}_0-1)\psi
\end{align*}
(see \eqref{definition operator A} for the definition of operator A).
We estimate it thanks to product estimates \ref{product estimate} and the following estimations on $\mathrm{F}_0$ (where $a \lesssim b$ means there exists a constant $C > 0$ independent of $\mu$ such that $a \leq C b$)
\begin{align*}
        ||\Lambda^s(\mathrm{F}_0 - 1)\psi||_2 \lesssim \mu|\nabla\psi|_{H^{s+1}}, \ \
        ||\Lambda^s\partial_z \mathrm{F}_0\psi||_2 \lesssim \mu|\nabla\psi|_{H^{s+1}}, \ \
        ||\Lambda^s\partial_z^2 \mathrm{F}_0\psi||_2 \lesssim \mu|\nabla\psi|_{H^{s+1}},
\end{align*}
stemming from the existence of $C>0$ such that for any $z\in [-1,0]$, $\xi \in\mathbb{R}^d$
\begin{align*}
    |F_0(z,\xi)-1| + |\partial_z F_0(z,\xi)| + |\partial_z^2F_0(z,\xi)|\leq C|\xi|^2.
\end{align*}
We get
\begin{align}\label{Estimation Straightened Laplacian phitildeappr}
    ||\Lambda^s h\nabla^{\mu}\cdot P(\Sigma_t)\nabla^{\mu}\widetilde{\phi}_{\appr}||_2 \leq \mu^2\epsilon M(s+2)|\nabla\psi|_{H^{s+3}}.
\end{align}
Now let's define the function $\widetilde{u} \colonequals \phi - \widetilde{\phi}_{\appr}$. It solves the following elliptic problem
\begin{align*}
    \begin{cases}
        h\nabla^{\mu}\cdot P(\Sigma)\nabla^{\mu} \tilde{u}=-\mu^2\epsilon R, \\
        \tilde{u}|_{z=0}=0, \ \ \partial_z \tilde{u}|_{z=-1}=0,
    \end{cases}
\end{align*}
where $R = \frac{1}{\mu^2\epsilon} h\nabla^{\mu}\cdot P(\Sigma_t)\nabla^{\mu}\widetilde{\phi}_{\appr}$.
So \eqref{Estimation Straightened Laplacian phitildeappr} gives a control on the remainder and we get from lemma \ref{lemma 3.43} one of the wanted estimations: 
\begin{align}\label{Estimation on utilde}
    ||\Lambda^s\nabla^{\mu}(\phi-\widetilde{\phi}_{\appr})||_2 = ||\Lambda^s \nabla^{\mu} \widetilde{u}||_2 \leq \mu^2\epsilon M(s+1) ||\Lambda^s R||_2 \leq \mu^2\epsilon M(s+2)|\nabla\psi|_{H^{s+3}}.
\end{align}

\noindent Proceeding similarly as for \eqref{Estimation on utilde} we get the estimates on $\phi_{app}$ defined by \eqref{first expression of phi_app}
\begin{align}\label{Estimation on u}
     ||\Lambda^s\nabla^{\mu}(\phi-\phi_{\appr})||_2 \leq \mu^2\epsilon M(s+2)|\nabla\psi|_{H^{s+3}}.
\end{align}
\end{proof}
\noindent \underline{Step 2}: From the estimates on $\widetilde{\phi}_{\appr}$ and $\phi_{\appr}$ we get the ones on the error made when approximating $\overline{V}$ by $\widetilde{\overline{V}}_{\appr}$ or $\overline{V}_{\appr}$ using 
\begin{align}\label{Definition of Vbar and Vbartildeapp}
    \begin{cases}
        \overline{V} = \frac{1}{h} \int_{-1}^{0} [h\nabla\phi - (z\nabla h + \epsilon\nabla\zeta)\partial_z\phi] \dz, \\
        \widetilde{\overline{V}}_{\appr} = \frac{1}{h} \int_{-1}^{0} [h\nabla\widetilde{\phi}_{\appr} - (z\nabla h + \epsilon\nabla\zeta)\partial_z\widetilde{\phi}_{\appr}] \dz,\\
        \overline{V}_{\appr} = \frac{1}{h} \int_{-1}^{0} [h\nabla\phi_{\appr} - (z\nabla h + \epsilon\nabla\zeta)\partial_z\phi_{\appr}] \dz.
    \end{cases}
\end{align}
Indeed for any $u$ sufficiently regular and vanishing at $z=0$ we have, thanks to Jensen inequality and Poincaré inequality (page 40 of \cite{WWP}):
\begin{align*}
    |\int_{-1}^{0} u \dz|_{H^s}^2 = \int_{\mathbb{R}^d} |\Lambda^s \int_{-1}^0 u \dz|^2 \dX \leq \int_{\mathbb{R}^d} (\int_{-1}^{0} |\Lambda^s u| \dz)^2 \dX \leq &\int_{-1}^{0} \int_{\mathbb{R}^d}|\Lambda^s u|^2 \dX \dz \\
    \leq &||\Lambda^s \partial_z u||^2_2 \leq ||\Lambda^s \nabla^{\mu}u||^2_2.
\end{align*}
Applying this last inequality to $\overline{V}-\widetilde{\overline{V}}_{\appr}$ and $\overline{V}-\overline{V}_{\appr}$ gives the desired estimations of proposition \ref{Proposition Approximation Vbar estimates with VbarApprox}
\begin{align}\label{Error estimation Vbartilde}
    \begin{cases}
    |\overline{V}-\widetilde{\overline{V}}_{\appr}|_{H^s} \leq M(s+1) ||\Lambda^{s+1}\nabla^{\mu}(\phi-\widetilde{\phi}_{\appr})||_2 \leq \mu^2\epsilon M(s+3)|\nabla\psi|_{H^{s+4}},\\
     |\overline{V}-\overline{V}_{\appr}|_{H^s} \leq M(s+1) ||\Lambda^{s+1}\nabla^{\mu}(\phi-\phi_{\appr})||_2 \leq \mu^2\epsilon M(s+3)|\nabla\psi|_{H^{s+4}}.
    \end{cases}
\end{align}

\indent 

\noindent \underline{Step 3}: We now prove the error estimates of proposition \ref{Proposition Approximation Vbar estimates with VbarApprox} on the approximation of $\nabla\psi$ by $\overline{V}$, i.e.
\begin{align*}
    |\overline{V}-\nabla\psi -\frac{\mu}{3h}\nabla(h^3 \mathrm{F}_3 \nabla \cdot \overline{V})|_{H^s} \leq \mu^2\epsilon M(s+4) |\nabla\psi|_{H^{s+4}}.
\end{align*}
The first thing we need is an estimation on $\phi-\phi_0$ (see \eqref{phi0}).\\
The straightened laplacian of $\phi_0$ is $\mu\epsilon A(\nabla,\partial_z)\mathrm{F}_0\psi$. So by the same reasoning as above (see \eqref{Estimation on utilde}) we get
\begin{align*}
    ||\Lambda^s\nabla^{\mu}(\phi-\phi_0)||_2 \leq M(s+1) ||\Lambda^s \mu\epsilon A(\nabla,\partial_z)\mathrm{F}_0 \psi||_2 \leq \mu\epsilon M(s+2)|\nabla\psi|_{H^{s+1}},
\end{align*}
(Because $\forall z \in [-1,0]$, $\forall \xi \in \mathbb{R}^d$, $\mathrm{F}_0(z,\sqrt{\mu}|\xi|) \leq 1$). \\
And if we define $\overline{V}_0 \colonequals \frac{1}{h} \int_{-1}^{0} [h\nabla\phi_0 - \epsilon(z+1)\nabla\zeta\partial_z\phi_0] \dz$ then using again Poincaré inequality we have
\begin{align}\label{Estimation Vbar moins Vbar0}
    |\overline{V}-\overline{V}_0|_{H^s} \leq M(s+1) ||\Lambda^{s+1}\nabla^{\mu}(\phi-\phi_0)||_2 \leq \mu\epsilon M(s+3)|\nabla\psi|_{H^{s+2}}.
\end{align}
Then remarking the following equality
\begin{align*}
    \mathrm{F}_1\nabla\psi = \frac{1}{h}\int_{-1}^{0}h\nabla\phi_0 \dz = \overline{V}_{0} + \frac{1}{h} \int_{-1}^0 \epsilon(z+1)\nabla\zeta\partial_z \phi_0 \dz,
\end{align*}
from the previous estimation on $\overline{V}-\overline{V}_0$ \eqref{Estimation Vbar moins Vbar0}, direct computations, product estimates \ref{product estimate} and quotient estimates \ref{Quotient estimate} we get
\begin{align*}
    |\overline{V}-\mathrm{F}_1\nabla\psi|_{H^{s}} \leq &|\overline{V} -\overline{V}_0|_{H^{s}} + |\frac{1}{h} \int_{-1}^0 \epsilon(z+1)\nabla\zeta\partial_z \phi_0 \dz|_{H^{s}} \\
    \leq &\mu\epsilon M(s+3) |\nabla\psi|_{H^{s+2}} + |\frac{\epsilon\nabla\zeta}{h}\int_{-1}^0(z+1)\partial_z \mathrm{F}_0\psi|_{H^{s}} \\
    \leq &\mu\epsilon M(s+3) |\nabla\psi|_{H^{s+2}}.
\end{align*}
This proves one of the inequality of proposition \ref{Proposition Approximation Vbar estimates with VbarApprox}. \\
Now using the error estimates on $\widetilde{\overline{V}}_{\appr}$ \eqref{Error estimation Vbartilde} and the upper bound $F_2(\sqrt{\mu}|\xi|) \leq \frac{1}{1+\frac{\mu|\xi|^2}{3}}$ we get
\begin{align*}
    &|\overline{V}-\nabla\psi -\frac{\mu}{3h}\nabla(h^3 \mathrm{F}_2 \nabla \cdot \mathrm{F}_1^{-1} \overline{V})|_{H^s} \\
    \leq &|\overline{V} - \nabla\psi -\frac{\mu}{3h}\nabla(h^3 \mathrm{F}_2 \nabla\cdot\nabla\psi)|_{H^s} + |\frac{\mu}{3h} \nabla(h^3 \mathrm{F}_2\nabla\cdot(\mathrm{F}_1^{-1}\overline{V}-\nabla\psi))|_{H^s} \\
    \leq &\mu^2\epsilon M(s+3)|\nabla\psi|_{H^{s+4}} + \mu M_0 |\mathrm{F}_1^{-1}\overline{V}-\nabla\psi|_{H^{s}}.
\end{align*}
By using the upper bound $F_1^{-1}(\sqrt{\mu}|\xi|) \leq 1 + \sqrt{\mu}|\xi|$ we get
\begin{align*}
    |\mathrm{F}_1^{-1}\overline{V}-\nabla\psi|_{H^{s}} \leq |\overline{V}-\mathrm{F}_1\nabla\psi|_{H^{s+1}}.
\end{align*}
And at the end we proved 
\begin{align}\label{approximation of nabla psi by Vbar}
    |\overline{V}-\nabla\psi -\frac{\mu}{3h}\nabla(h^3 F_2 \nabla \cdot F_1^{-1} \overline{V})|_{H^s} \leq \mu^2\epsilon M(s+4) |\nabla\psi|_{H^{s+4}}
\end{align}
This conclude the proof of proposition \ref{Proposition Approximation Vbar estimates with VbarApprox}.\\
\label{Rigorous expansions}

\section{Derivation and consistency of a first full dispersion Green-Naghdi system}
\label{Derivation and consistency of a first full dispersion Green-Naghdi system}

\subsection{Formal Derivation}
Let $H$ be the Hamiltonian of the Zakharov/Craig-Sulem's formulation of the water waves problem \eqref{WaterWavesEquations}:
\begin{align}
    H = \frac{1}{2}\int_{\mathbb{R}^d} \zeta^2 dX + \frac{1}{2\mu} \int_{\mathbb{R}^d} \psi \mathcal{G}^{\mu}\psi \dX,
\end{align}
where $\zeta$ is the surface elevation, $\psi$ is the velocity potential at the surface and $\mathcal{G}^{\mu}$ is the Dirichlet-to-Neumann operator. Let us recall that the expression of the Hamilton equations derived from an Hamiltonian, here $H(\zeta,\psi)$, is
\begin{align*}
    \begin{cases}
        \partial_t \zeta = \delta_{\psi} H,\\
        \partial_t \psi = -\delta_{\zeta} H,
    \end{cases}
\end{align*}
where $\delta_{\zeta}$ and $\delta_{\psi}$ are functional derivatives.\\
Using the definition of $\mathcal{G}^{\mu}$ in term of $\overline{V}$ \eqref{Dirichlet-to-NeumannOperator} and a formal integration by parts, we get
\begin{align}
    H = \frac{1}{2} \int_{\mathbb{R}^d} \zeta^2 \dX + \frac{1}{2} \int_{\mathbb{R}^d} h\overline{V} \cdot \nabla\psi \dX.
\end{align}
Replacing $\overline{V}$ by one of its approximation $\widetilde{\overline{V}}_{\appr}$ \eqref{second approximation of V bar}, we obtain an approximation of the Hamiltonian at order $O(\mu^2\epsilon)$, denoted $H_{\appr}$
\begin{align}\label{First approximated hamiltonian}
    H_{\appr} = \frac{1}{2} \int_{\mathbb{R}^d} \zeta^2 \dX + \frac{1}{2} \int_{\mathbb{R}^d} h \nabla \psi \cdot \nabla \psi \dX
    + \frac{\mu}{6} \int_{\mathbb{R}^d} \nabla(h^3\mathrm{F}_2 [\nabla\cdot\nabla\psi]) \cdot \nabla\psi \dX.
\end{align}
Now let's differentiate this approximated Hamiltonian in the sense of functional derivatives with respect to $\psi$ and $\zeta$, we get 
\begin{align}
    \begin{cases} 
        \delta_{\psi}H_{\appr}=-\nabla\cdot(h\nabla\psi)-\frac{\mu}{6}(\Delta(h^3\mathrm{F}_2[\Delta\psi]) + \Delta(\mathrm{F}_2[h^3\Delta\psi]),\\
        \delta_{\zeta}H_{\appr} = \zeta + \frac{\epsilon}{2}|\nabla\psi|^2 -\frac{\mu\epsilon}{2}h^2\mathrm{F}_2[\Delta\psi]\Delta\psi.
    \end{cases}
\end{align}
We can now write down the Hamilton equations on the approximated Hamiltonian $H_{\appr}$  
\begin{align}\label{first full dispersion model (in psi)}
    \begin{cases} \partial_t \zeta = -\nabla\cdot(h\nabla\psi) - \frac{\mu}{6}\big(\Delta(\mathrm{F}_2[h^3\Delta\psi]) + \Delta(h^3\mathrm{F}_2[\Delta\psi])\big), \\
    \partial_t \psi =  -\zeta - \frac{\epsilon}{2}|\nabla\psi|^2 +\frac{\mu\epsilon}{2}h^2\mathrm{F}_2[\Delta\psi]\Delta\psi.
    \end{cases}
\end{align}

\begin{myrem}
\begin{itemize}
    \item This system is the full dispersion equivalent of a Green-Naghdi system with variables $(\zeta,\psi)$ (set $\mathrm{F}_2 = I_d$ to get the latter), see \cite{CraigGroves94} and \cite{Whitham67}. This last one is never studied because of its ill-posedness at the linear level. But for \eqref{first full dispersion model (in psi)} the ill-posedness is not clear. Indeed, by construction when linearizing this system around the rest state, we obtain the same system as the linearized water waves equations, that is
        \begin{align*}
            \begin{cases}
                \partial_t\zeta + \mathrm{F}_1\Delta\psi = 0, \\
                \partial_t\psi + \zeta = 0,
            \end{cases}    
        \end{align*}
    which is well-posed in Sobolev spaces.
    \item The system \eqref{first full dispersion model (in psi)} is Hamiltonian by construction. Hence smooth solutions preserve energy $H_{\appr}$ in addition to mass $\int_{\mathbb{R}^d}\zeta$ and momentum $\int_{\mathbb{R}^d} \zeta\nabla\psi$. 
\end{itemize}
\end{myrem}
\label{Formal Derivation}

\subsection{Consistency with respect to the water waves system}
We now prove proposition \ref{Proposition consistency first full dispersion GN system}, on the consistency at order $O(\mu^2\epsilon)$ with respect to the water waves system (see definition \ref{Definition consistency}) of the first full dispersion system of Green-Naghdi type derived in the previous subsection \eqref{first full dispersion model (in psi)}. I recall the proposition here.
\begin{mypp}
    Let $\mathrm{F}_1$ and $\mathrm{F}_2$ be the Fourier multipliers defined in proposition \ref{Proposition Approximation Vbar estimates}. The water waves equations are consistent at order $O(\mu^2\epsilon)$ in the shallow water regime $\mathcal{A}$ (see definition \ref{Definition consistency}) with the following full dispersion Hamiltonian Green-Naghdi system 
    \begin{align}
    \begin{cases} \partial_t \zeta +\nabla\cdot(h\nabla\psi) + \frac{\mu}{6}\big(\Delta(\mathrm{F}_2[h^3\Delta\psi]) + \Delta(h^3\mathrm{F}_2[\Delta\psi])\big)=0, \\
    \partial_t \psi  +\zeta + \frac{\epsilon}{2}|\nabla\psi|^2 -\frac{\mu\epsilon}{2}h^2\mathrm{F}_2[\Delta\psi]\Delta\psi=0,
    \end{cases}
\end{align}
with $n=4$.
\end{mypp}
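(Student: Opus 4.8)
The plan is to insert the true water waves solution $(\zeta,\psi)$ into the left-hand sides of system \eqref{FirstFullDispersionModel(InPsi)} and show that what remains is a residual of size $O(\mu^2\epsilon)$ in the sense of definition \ref{Definition consistency}, with loss of $n=4$ derivatives. The two equations are handled separately. For the second (Bernoulli-type) equation, I would start from the second water waves equation in \eqref{WaterWavesEquations} and compare it term by term with $\zeta + \frac{\epsilon}{2}|\nabla\psi|^2 - \frac{\mu\epsilon}{2}h^2(\mathrm{F}_2[\Delta\psi])\Delta\psi$. The only nontrivial term is the last one of the Bernoulli law in \eqref{WaterWavesEquations}, namely $\frac{\epsilon}{\mu}\frac{(\mathcal{G}^{\mu}\psi + \epsilon\mu\nabla\zeta\cdot\nabla\psi)^2}{2(1+\epsilon^2\mu|\nabla\zeta|^2)}$. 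Using $\mathcal{G}^{\mu}\psi = -\mu\nabla\cdot(h\overline V)$ from \eqref{Dirichlet-to-NeumannOperator}, this equals $\frac{\epsilon\mu}{2}\frac{(\nabla\cdot(h\overline V) - \epsilon\nabla\zeta\cdot\nabla\psi)^2}{1+\epsilon^2\mu|\nabla\zeta|^2}$; I would then substitute the approximation $\overline V = \nabla\psi + \frac{\mu}{3h}\nabla(h^3\mathrm{F}_2\Delta\psi) + O(\mu^2\epsilon)$ from the third estimate of proposition \ref{Proposition Approximation Vbar estimates}, expand, and observe that $\nabla\cdot(h\nabla\psi) = h\Delta\psi + \epsilon\nabla\zeta\cdot\nabla\psi$, so that to leading order the numerator is $(h\Delta\psi)^2 + O(\mu)$, whence the whole term is $\frac{\mu\epsilon}{2}h^2(\Delta\psi)^2 + O(\mu^2\epsilon)$. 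Replacing one factor $\Delta\psi$ by $\mathrm{F}_2[\Delta\psi]$ costs only another $O(\mu)$ since $\mathrm{F}_2 = I_d + O(\mu)$, and this matches exactly the term appearing in \eqref{FirstFullDispersionModel(InPsi)}. The error bookkeeping uses the product and quotient estimates (lemmas referenced as \ref{product estimate}, \ref{Quotient estimate}) together with the boundedness of the Fourier multipliers, and produces a remainder controlled by $N(s+4)$.

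For the first (mass conservation) equation, I would rewrite the first water waves equation as $\partial_t\zeta + \nabla\cdot(h\overline V) = 0$, which is exact, and then it suffices to estimate $\nabla\cdot(h\overline V) - \nabla\cdot(h\nabla\psi) - \frac{\mu}{6}(\Delta(\mathrm{F}_2[h^3\Delta\psi]) + \Delta(h^3\mathrm{F}_2[\Delta\psi]))$. By the third estimate of proposition \ref{Proposition Approximation Vbar estimates} we have $h\overline V = h\nabla\psi + \frac{\mu}{3}\nabla(h^3\mathrm{F}_2\Delta\psi) + O(\mu^2\epsilon)$ in $H^{s+1}$, so applying $\nabla\cdot$ (losing one derivative) gives $\nabla\cdot(h\overline V) = \nabla\cdot(h\nabla\psi) + \frac{\mu}{3}\Delta(h^3\mathrm{F}_2\Delta\psi) + O(\mu^2\epsilon)$. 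It then remains to see that $\frac{\mu}{3}\Delta(h^3\mathrm{F}_2[\Delta\psi])$ and $\frac{\mu}{6}(\Delta(\mathrm{F}_2[h^3\Delta\psi]) + \Delta(h^3\mathrm{F}_2[\Delta\psi]))$ differ by $O(\mu^2\epsilon)$: indeed their difference is $\frac{\mu}{6}\Delta(\mathrm{F}_2[h^3\Delta\psi] - h^3\mathrm{F}_2[\Delta\psi])$, and the commutator $[\mathrm{F}_2, h^3]$ applied to $\Delta\psi$ is $O(\mu\epsilon)$ because $\mathrm{F}_2 = I_d + O(\mu)$ and $h^3 - 1 = O(\epsilon)$, so the commutator vanishes to this order. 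This symmetrization of the dispersive term is precisely what makes the system Hamiltonian, and it costs nothing in consistency order.

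The main obstacle is purely bookkeeping: tracking the exact number of derivatives lost at each step (the Dirichlet–Neumann estimate costs $3$, applying $\nabla\cdot$ or $\Delta$ costs one or two more, and the product/commutator estimates cost none but require $\zeta$ in the right Sobolev space) so that the final remainder is controlled by $N(s+4)$ and no more, matching the claimed $n=4$. I would also need to be careful that the solution $(\zeta,\psi)$ has enough regularity for all intermediate quantities to make sense — this is guaranteed by theorem 4.16 of \cite{WWP} together with the hypothesis that $(\zeta,\psi) \in C([0,T/\epsilon]; H^{s+n}\times\dot H^{s+n+1})$ in definition \ref{Definition consistency} — and that the non-cavitation condition \eqref{FundamentalHypothesis} is propagated, which lets me use the quotient estimates for the factors $1/h$. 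No genuinely new analytic input is needed beyond proposition \ref{Proposition Approximation Vbar estimates}; the proof is an exercise in careful substitution and Sobolev product estimates.
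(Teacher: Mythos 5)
Your proposal is correct and follows essentially the same route as the paper: the mass equation is handled via the exact identity $\partial_t\zeta+\nabla\cdot(h\overline V)=0$, the third estimate of proposition \ref{Proposition Approximation Vbar estimates}, and the observation that the symmetrization $\frac12(\Delta(h^3\mathrm{F}_2[\Delta\psi])+\Delta(\mathrm{F}_2[h^3\Delta\psi]))$ differs from $\Delta(h^3\mathrm{F}_2[\Delta\psi])$ by a commutator $[\mathrm{F}_2,h^3]$ of size $O(\mu\epsilon)$; the Bernoulli equation is handled by stripping the denominator, using $\mathcal{G}^\mu\psi=-\mu\nabla\cdot(h\overline V)$ and $\overline V=\nabla\psi+O(\mu)$ to reduce the quadratic term to $\frac{\mu\epsilon}{2}h^2(\Delta\psi)^2$, and then inserting $\mathrm{F}_2=I_d+O(\mu)$. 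The only (harmless) divergence is that for the second equation the paper needs only the cruder estimate $|\overline V-\nabla\psi|_{H^s}\le\mu M(s+2)|\nabla\psi|_{H^{s+2}}$ rather than the full $O(\mu^2\epsilon)$ expansion you invoke.
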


\begin{proof}
Let $\zeta$ and $\psi$ be the solutions of the water waves system \eqref{WaterWavesEquations}. Using the notations of definition \ref{Definition consistency} we have in our case 
\begin{align*}
    \begin{cases}
        \mathcal{N}_{(A)}^1(\zeta,\psi) := \nabla\cdot(h\nabla\psi) + \frac{\mu}{6}\big(\Delta(\mathrm{F}_2[h^3\Delta\psi]) + \Delta(h^3\mathrm{F}_2[\Delta\psi])\big), \\
        \mathcal{N}_{(A)}^2(\zeta,\psi) := \zeta + \frac{\epsilon}{2}|\nabla\psi|^2 - \frac{\mu\epsilon}{2}h^2\mathrm{F}_2[\Delta\psi]\Delta\psi.
    \end{cases}
\end{align*}
So we need to prove 
\begin{align}\label{Consistency estimate first system}
    \begin{cases}
        |\partial_t\zeta + \nabla\cdot(h\nabla\psi) + \frac{\mu}{6}\big(\Delta(\mathrm{F}_2[h^3\Delta\psi]) + \Delta(h^3\mathrm{F}_2[\Delta\psi])\big)|_{H^s} \leq \mu^2\epsilon N(s+4),\\
        |\partial_t\psi + \zeta + \frac{\epsilon}{2}|\nabla\psi|^2 - \frac{\mu\epsilon}{2}h^2\mathrm{F}_2[\Delta\psi]\Delta\psi|_{H^s} \leq \mu^2\epsilon N(s+4). 
    \end{cases}
\end{align}

\indent

\noindent \underline{Step 1}: Let's prove the first estimate of \eqref{Consistency estimate first system}.\\
Using the definition of the Dirichlet-to-Neumann operator $\mathcal{G}^{\mu}$ in term of the vertically averaged horizontal velocity $\overline{V}$, we know that the water waves solutions $(\zeta,\psi)$ satisfy 
\begin{align*}
    \partial_t \zeta + \nabla \cdot (h\overline{V}) = 0.
\end{align*}
Moreover we found an approximation of $\overline{V}$ of order $O(\mu^2\epsilon)$. I recall it here
\begin{align*}
    \widetilde{\overline{V}}_{\appr} = \nabla \psi + \frac{\mu}{3h}\nabla(h^3\mathrm{F}_2\Delta\psi)
\end{align*}
where $\mathrm{F}_2$ is a Fourier multiplier defined as $\mathrm{F}_2 = \frac{3}{\mu|D|^2}(1 - \frac{\tanh{(\sqrt{\mu}|D|)}}{\sqrt{\mu}|D|})$. For which we got the following estimations (see proposition \ref{Proposition Approximation Vbar estimates with VbarApprox})
\begin{align*}
    |\overline{V}-\widetilde{\overline{V}}_{\appr}|_{H^s} \leq \mu^2 \epsilon M(s+3)|\nabla\psi|_{H^{s+4}}.
\end{align*}
So we have 
\begin{align*}
    &|\partial_t\zeta +\nabla\cdot(h\widetilde{\overline{V}}_{\appr})|_{H^s}
    \leq |\partial_t\zeta +\nabla\cdot(h\overline{V})|_{H^s}+|\nabla\cdot(h(\overline{V}-\widetilde{\overline{V}}_{\appr}))|_{H^s} \\
    \leq &M(s+1)|\overline{V}-\widetilde{\overline{V}}_{\appr}|_{H^{s+1}}
    \leq \mu^2\epsilon M(s+4)|\nabla\psi|_{H^{s+5}}.
\end{align*}
Hence
\begin{align*}
    |\partial_t\zeta + \nabla\cdot(h\nabla\psi)+\frac{\mu}{3}\Delta(h^3\mathrm{F}_2[\Delta\psi])|_{H^s} \leq \mu^2\epsilon M(s+4)|\nabla\psi|_{H^{s+5}}.
\end{align*}
To prove the first estimation in \eqref{Consistency estimate first system} it only remains to prove that there exists $k, l \in \mathbb{N}$ with $k, l \leq 4$ such that
\begin{align*}
    |\Delta(h^3\mathrm{F}_2[\Delta\psi])-(\frac{1}{2}\Delta(h^3\mathrm{F}_2[\Delta\psi])+\frac{1}{2}\Delta(\mathrm{F}_2[h^3\Delta\psi]))|_{H^s} \leq \mu\epsilon M(s+k)|\nabla\psi|_{H^{s+l}}.
\end{align*}
Seeing that
\begin{align*}
    &\Delta(h^3\mathrm{F}_2[\Delta\psi])-(\frac{1}{2}\Delta(h^3\mathrm{F}_2[\Delta\psi])+\frac{1}{2}\Delta(\mathrm{F}_2[h^3\Delta\psi])) \\
    = &\frac{1}{2}\Delta((h^3-1)(\mathrm{F}_2-1)[\Delta\psi]) - \frac{1}{2}\Delta((\mathrm{F}_2-1)[(h^3-1)\Delta\psi]),
\end{align*}
we only need to use the estimates on $\mathrm{F}_2$ in proposition \ref{FourierMultiplierApproximations}, the product estimates \ref{product estimate} and the fact that $|h^3-1|_{H^{s+4}}\leq \epsilon M(s+4)$ to get
\begin{align*}
    &|\Delta((h-1)^3(\mathrm{F}_2-1)[\Delta\psi]) - \Delta((\mathrm{F}_2-1)[(h^3-1)\Delta\psi])|_{H^s} \\
    \leq &|\Delta((h-1)^3(\mathrm{F}_2-1)[\Delta\psi])|_{H^s} + |\Delta((\mathrm{F}_2-1)[(h^3-1)\Delta\psi])|_{H^s} \leq \mu\epsilon M(s+4)|\nabla\psi|_{H^{s+5}}.
\end{align*}
So
\begin{align*}
    |\partial_t\zeta + \nabla\cdot(h\nabla\psi) + \frac{\mu}{6}\big(\Delta(\mathrm{F}_2[h^3\Delta\psi]) + \Delta(h^3\mathrm{F}_2[\Delta\psi])\big)|_{H^s} \leq \mu^2\epsilon M(s+4) |\nabla\psi|_{H^{s+5}}.
\end{align*}

\indent

\noindent \underline{Step 2}: We now prove the second estimate of \eqref{Consistency estimate first system}.\\
We know that the solutions of the water waves system $(\zeta,\psi)$ satisfy
\begin{align*}
    \partial_t \psi + \zeta + \frac{\epsilon}{2}|\nabla\psi|^2-\frac{\mu\epsilon}{2}\frac{(\frac{1}{\mu}\mathcal{G}^{\mu}\psi + \epsilon\mu\nabla\zeta\cdot\nabla\psi)^2}{1+\epsilon^2\mu|\nabla\zeta|^2} = 0.
\end{align*}
\noindent Using quotient estimates \ref{Quotient estimate}, product estimates \ref{product estimate} and proposition \ref{Theorem 3.15} we get
\begin{align*}
    &|\frac{(\frac{1}{\mu}\mathcal{G}^{\mu}\psi + \epsilon\nabla\zeta\cdot\nabla\psi)^2}{1+\epsilon^2\mu|\nabla\zeta|^2}-(\frac{1}{\mu}\mathcal{G}^{\mu}\psi + \epsilon\nabla\zeta\cdot\nabla\psi)^2|_{H^s} \\
    = &|\frac{\mu\epsilon^2|\nabla\zeta|^2(\frac{1}{\mu}\mathcal{G}^{\mu}\psi + \epsilon\nabla\zeta\cdot\nabla\psi)^2}{1+\epsilon^2\mu|\nabla\zeta|^2}|_{H^s} \\
    \leq &\mu\epsilon^2 M(s+1) |(\frac{1}{\mu}\mathcal{G}^{\mu}\psi + \epsilon\nabla\zeta\cdot\nabla\psi)^2|_{H^s} \\
    \leq &\mu\epsilon^2 M(s+1)|\frac{1}{\mu}\mathcal{G}^{\mu}\psi + \epsilon\nabla\zeta\cdot\nabla\psi|_{H^{s+2}}^2 \\
    \leq &\mu\epsilon^2 M(s+1)(|\frac{1}{\mu}\mathcal{G}^{\mu}\psi|_{H^{s+2}} + \epsilon |\zeta|_{H^{s+3}}|\nabla\psi|_{H^{s+2}})^2\\
    \leq &\mu\epsilon^2 M(s+1) (M(s+4)|\nabla\psi|_{H^{s+3}})^2\\
    \leq &\mu\epsilon^2 C(M(s+4),|\nabla\psi|_{H^{s+3}}).
\end{align*}
So up to $O(\mu^2\epsilon)$ terms, we can replace the second equation of \eqref{WaterWavesEquations} by a simpler one, i.e. there exists $R_2 \in H^{s}$ such that $|R_2|_{H^s} \leq C(M(s+5),|\nabla\psi|_{H^{s+5}})$ and
\begin{align}\label{First system second equation simplification 1}
    \partial_t \psi + \zeta + \frac{\epsilon}{2}|\nabla\psi|^2-\frac{\mu\epsilon}{2}(-\nabla\cdot(h\overline{V}) + \epsilon\nabla\zeta\cdot\nabla\psi)^2= \mu^2\epsilon R_2.
\end{align}
Now we need a proposition proved in \cite{WWP} (see proposition 3.37 and remark 3.40).

\begin{mypp}\label{Vbar moins nabla psi estimation}
    Let $t_0 > d/2$, $s \geq 0$, and $\zeta \in H^{\max{(t_0+1,s+2)}}(\mathbb{R}^d)$ be such that \eqref{FundamentalHypothesis} is satisfied. Let $\psi \in \dot{H}^{s+2}(\mathbb{R}^d)$, and $\overline{V}$ be as in \eqref{VerticallyAveragedHorizontalVelocity}. Then we have the following error estimates
    \begin{align*}
        \begin{cases}
            |\overline{V}| \leq M(s+2)|\nabla\psi|_{H^{s+2}},\\
            |\overline{V}-\nabla\psi|_{H^s} \leq \mu M(s+2) |\nabla\psi|_{H^{s+2}}.
        \end{cases}
    \end{align*}
\end{mypp}
\noindent Using proposition \ref{Vbar moins nabla psi estimation} and product estimates \ref{product estimate} we have
\begin{align*}
    &|(-\nabla\cdot(h\overline{V}) + \epsilon\nabla\zeta\cdot\nabla\psi)^2 - (-\nabla\cdot(h\nabla\psi) + \epsilon\nabla\zeta\cdot\nabla\psi)^2|_{H^s} \\
    = & |(-\nabla\cdot(h(\overline{V} - \nabla\psi))(-\nabla\cdot(h\overline{V}) + 2\epsilon\nabla\zeta\cdot\nabla\psi -\nabla\cdot(h\nabla\psi)|_{H^{s}}\\
    \leq &|\nabla\cdot(h(\overline{V} - \nabla\psi)|_{H^{s+1}}|-\nabla\cdot(h\overline{V}) + 2\epsilon\nabla\zeta\cdot\nabla\psi -\nabla\cdot(h\nabla\psi)|_{H^{s+1}}\\
    \leq &\mu C(M(s+4),|\nabla\psi|_{H^{s+4}}).
\end{align*}
Hence up to $O(\mu^2\epsilon)$ terms, we can replace \eqref{First system second equation simplification 1} by a simpler one, i.e. there exists $R_2 \in H^{s}$ such that $|R_2|_{H^s} \leq C(M(s+4),|\nabla\psi|_{H^{s+4}})$ and
\begin{align}\label{First system second equation simplification 2}
     \partial_t \psi + \zeta + \frac{\epsilon}{2}|\nabla\psi|^2-\frac{\mu\epsilon}{2}
     h^2(\Delta\psi)^2= \mu^2\epsilon R_2.
\end{align}
Now it only remains to use the estimates on $F_2$ in proposition \ref{FourierMultiplierApproximations} to get
\begin{align*}
    |h^2\mathrm{F}_2[\Delta\psi]\Delta\psi - h^2(\Delta\psi)^2|_{H^{s}} \leq \mu C(M(s+2),|\nabla\psi|_{H^{s+3}})    
\end{align*}
So there exists $R_2 \in H^s$ such that $|R_2| \leq C(M(s+4),|\nabla\psi|_{H^{s+4}})$ and
\begin{align*}
    \partial_t \psi + \zeta + \frac{\epsilon}{2}|\nabla\psi|^2-\frac{\mu\epsilon}{2}
     h^2\mathrm{F}_2[\Delta\psi]\Delta\psi= \mu^2\epsilon R_2.
\end{align*}
Thus we proved the consistency of the water waves equations \eqref{WaterWavesEquations} at order $O(\mu^2\epsilon)$ in the shallow water regime with the system \eqref{first full dispersion model (in psi)} with $n=4$.
\end{proof}

\label{Consistency with respect to the water waves system}

\section{Derivation and consistency of a second full dispersion Green-Naghdi system}
\label{Derivation and consistency of a second full dispersion Green-Naghdi system}

\subsection{Formal Derivation}
In this subsection we explain formally how to obtain the second full dispersion Green-Naghdi system \eqref{SecondFullDispersionModel(InVbar)} using \eqref{approximation of phi in terms of V bar}. But first let's symmetrize \eqref{approximation of phi in terms of V bar}. It yields 
\begin{align}\label{approximation of grad psi in terms of V bar symmetrised version}
    h\nabla\psi = h\overline{V}-\frac{\mu}{6}(\nabla(h^3\mathrm{F}_3 [\nabla\cdot \overline{V}])+\nabla(\mathrm{F}_3[h^3\nabla\cdot \overline{V}])+ O(\mu^2\epsilon).
\end{align}
We will define two operators $T[h]V = -\frac{1}{6h}(\nabla(h^3\mathrm{F}_3[\nabla \cdot V]) + \nabla(\mathrm{F}_3[h^3\nabla \cdot V]))$ and $I[h]V = h(V + \mu T[h]V)$, such that the previous approximation of $\nabla \psi$ can be written:
\begin{equation}\label{Approximation of h nabla psi with Ih}
    h\nabla\psi = I[h]\overline{V} + O(\mu^2\epsilon)
\end{equation}

\begin{myrem}
The choice of the symmetrization is arbitrary. Here we use the same as for the first full dispersion Green-Naghdi system \eqref{first full dispersion model (in psi)}, for which the symmetrization naturaly comes up when asking the system to be Hamiltonian. See \eqref{Full dispersion Green Nagdhi model article Vincent} for another full-dispersion Green-Naghdi system with a kind of symmetrization already appearing in the litterature \cite{DucheneIsrawiTalhouk16}.
\end{myrem}
\noindent We suppose $I[h]$ invertible and do formally all the computations with this assumption.\\
Let H be the Hamiltonian of the Zakharov/Craig-Sulem formulation:
\begin{align*}
    H &= \frac{1}{2} \int_{\mathbb{R}^d} \zeta^2 dX + \frac{1}{2} \int_{\mathbb{R}^d} h\overline{V} \cdot \nabla\psi dX.
\end{align*}
Using \eqref{Approximation of h nabla psi with Ih} an approximated Hamiltonian would be:
\begin{equation}\label{Second approximated hamiltonian}
    H_{\appr} = \frac{1}{2} \int_{\mathbb{R}^d} \zeta^2 dX + \frac{1}{2} \int_{\mathbb{R}^d} h I[h]^{-1}[h\nabla\psi]\cdot\nabla\psi dX.
\end{equation}
Let's differentiate this Hamiltonian in the sense of functional derivative.\\ 
Through some computations we first get the derivative of $H_{\appr}$ with respect to $\psi$
\begin{equation*}
    \delta_{\psi}H = -\nabla \cdot(h I[h]^{-1}[h\nabla\psi]).
\end{equation*}
To compute the derivative of $H_{\appr}$ with respect to $\zeta$ we use the formula
\begin{align}\label{Inverse formula Ih}
    I[h]\partial_{\zeta}(I[h]^{-1})I[h]V=-\partial_{\zeta}I[h]V.
\end{align}
From \eqref{Inverse formula Ih} and the fact that $I[h]^{-1}$ is a symmetric operator, we can compute the derivative in $\zeta$ of the second term of $H_{\appr}$. We get 
\begin{align*}
    \delta_{\zeta}H = &\zeta + \frac{\epsilon}{2} I[h]^{-1}[h\nabla\psi]\cdot \nabla \psi
    - \frac{\epsilon}{2} I[h]^{-1}[h\nabla\psi] \cdot I[h]^{-1}[h\nabla\psi] \\
    + &\frac{\mu\epsilon}{4}\nabla \cdot(I[h]^{-1}[h\nabla \psi])h^2\mathrm{F}_3[\nabla \cdot (I[h]^{-1}[h\nabla\psi])] 
    + \frac{\mu\epsilon}{4} \nabla \cdot(\mathrm{F}_3[I[h]^{-1}[h\nabla\psi]])h^2\nabla \cdot (I[h]^{-1}[h\nabla \psi]) \\
    + & \frac{\epsilon}{2} I[h]^{-1}[h\nabla\psi]\cdot \nabla\psi.
\end{align*}
Then we define $\overline{V}_{\appr} \colonequals (h(Id+\mu T[h]))^{-1} [h\nabla \psi] = I[h]^{-1}[h\nabla\psi]$. This new quantity approximates $\overline{V}$ at order $O(\mu^2\epsilon)$, i.e. $\overline{V}_{\appr} =  \overline{V}+ O(\mu^2\epsilon)$. The two functional derivatives $\delta_{\psi}H_{\appr}$ and $\delta_{\zeta}H_{\appr}$ allow us to obtain the equations of movement in $\zeta$ and $\overline{V}_{\appr}$. \\
First the conservation of mass:
\begin{align*}
    \partial_t \zeta = \delta_{\psi}H \iff \partial_t \zeta &= -\nabla\cdot (hI[h]^{-1}[h\nabla \psi]) \\
    &= -\nabla \cdot(h\overline{V}_{\appr}).
\end{align*}
And next the conservation of momentum:
\begin{align*}
    &\partial_t \psi = -\delta_{\zeta}H \\
    \iff &\partial_t \psi = -\zeta -\frac{\epsilon}{2}\nabla \psi \cdot I[h]^{-1}[h\nabla\psi] 
    + \frac{\epsilon}{2} I[h]^{-1}[h\nabla \psi]\cdot I[h]^{-1}[h\nabla\psi] \\
    + & \frac{\mu\epsilon}{4} \nabla \cdot (I[h]^{-1}[h\nabla \psi]) h^2 \mathrm{F}_3[\nabla \cdot(I[h]^{-1}[h\nabla\psi])]\\
    +  &\frac{\mu\epsilon}{4} \nabla \cdot (\mathrm{F}_3[I[h]^{-1}[h\nabla\psi]])h^2\nabla \cdot(I[h]^{-1}[h\nabla\psi]) 
    - \frac{\epsilon}{2} I[h]^{-1}[h\nabla\psi]\cdot \nabla \psi.
\end{align*}
Then applying $\nabla$, we obtain the conservation of momentum
\begin{align*}
    \partial_t((Id+\mu T[h])\overline{V}_{\appr}) = &-\nabla \zeta -\frac{\epsilon}{2} \nabla(|\overline{V}_{\appr}|^2) + \frac{\mu\epsilon}{6}\nabla(\frac{1}{h}\overline{V}_{\appr}\cdot \nabla(h^3\mathrm{F}_3[\nabla\cdot\overline{V}_{\appr}]+\mathrm{F}_3[h^3\nabla\cdot\overline{V}_{\appr}])) \\
    + &\frac{\mu\epsilon}{2}\nabla(h^2\mathrm{F}_3[\nabla\cdot\overline{V}_{\appr}]\nabla\cdot\overline{V}_{\appr}).
\end{align*}
Thus we get the second full dispersion Green-Naghdi model \eqref{SecondFullDispersionModel(InVbar)} that we recall here
\begin{align*}
    \begin{cases}
        \partial_t \zeta + \nabla \cdot(h\overline{V}) = 0, \\
        \partial_t((Id+\mu T[h])\overline{V}) +\nabla \zeta &+\frac{\epsilon}{2} \nabla(|\overline{V}|^2) - \frac{\mu\epsilon}{6}\nabla(\frac{1}{h}\overline{V}\cdot \nabla(h^3\mathrm{F}_3[\nabla\cdot\overline{V}]+\mathrm{F}_3[h^3\nabla\cdot\overline{V}])) \\
        &- \frac{\mu\epsilon}{2}\nabla(h^2\mathrm{F}_3[\nabla\cdot\overline{V}]\nabla\cdot\overline{V}) =0.
    \end{cases}
\end{align*}

\label{Formal Derivation 2}

\subsection{Consistency with respect to the water waves equations}
Taking the same notations and definitions as in subsection 3.2, we prove here proposition \ref{Proposition consistency second Full dispersion Gn System}. For the sake of clarity we recall it here.
\begin{mypp}\label{Consistency second full dispersion GN system section 4}
    Let $\mathrm{F}_3$ be the Fourier multiplier defined in proposition \ref{Proposition Approximation Vbar estimates}. Let $T[h]V \colonequals -\frac{1}{6h}(\nabla(h^3\mathrm{F}_3[\nabla \cdot V]) + \nabla(\mathrm{F}_3[h^3\nabla \cdot V]))$. The water waves equations are consistent at order $O(\mu^2\epsilon)$ in the shallow water regime $\mathcal{A}$ with
    \begin{align}\label{SecondFullDispersionModel(InVbar)2}
    \begin{cases}
        \partial_t \zeta + \nabla \cdot(h\overline{V})=0, \\
        \partial_t((Id+\mu T[h])\overline{V}) +\nabla \zeta &+\frac{\epsilon}{2} \nabla(|\overline{V}|^2) - \frac{\mu\epsilon}{6}\nabla(\frac{1}{h}\overline{V}\cdot \nabla(h^3\mathrm{F}_3[\nabla\cdot\overline{V}]+\mathrm{F}_3[h^3\nabla\cdot\overline{V}])) \\
        &- \frac{\mu\epsilon}{2}\nabla(h^2\mathrm{F}_3[\nabla\cdot\overline{V}]\nabla\cdot\overline{V}) =0.
    \end{cases}
\end{align}
with $n=6$.
\end{mypp}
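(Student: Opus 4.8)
The plan is to deduce this from the already-established consistency of the water waves equations with the first full dispersion system \eqref{FirstFullDispersionModel(InPsi)} (Proposition \ref{Proposition consistency first full dispersion GN system}), by carrying out on the equations the change of unknowns $\nabla\psi \leftrightarrow (Id+\mu T[h])\overline{V}$ that underlies the formal derivation of subsection \ref{Formal Derivation 2}. Let $(\zeta,\psi)$ be a solution of \eqref{WaterWavesEquations} with the regularity given by theorem 4.16 in \cite{WWP}, and $\overline{V}$ the associated vertically averaged velocity. The mass equation of \eqref{SecondFullDispersionModel(InVbar)2} needs no estimate: by Definition \ref{Definitions Fondamentales} one has $\mathcal{G}^{\mu}\psi=-\mu\nabla\cdot(h\overline{V})$, so the first water waves equation is exactly $\partial_t\zeta+\nabla\cdot(h\overline{V})=0$, which is the first equation of the model.

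The algebraic heart of the momentum equation is the identity $(Id+\mu T[h])\overline{V}=\nabla\psi+\mu^2\epsilon\,\rho_0$, obtained by symmetrizing the last estimate of Proposition \ref{Proposition Approximation Vbar estimates}: indeed $-\mu T[h]\overline{V}=\tfrac{\mu}{6h}(\nabla(h^3\mathrm{F}_3[\nabla\cdot\overline{V}])+\nabla(\mathrm{F}_3[h^3\nabla\cdot\overline{V}]))$ differs from $\tfrac{\mu}{3h}\nabla(h^3\mathrm{F}_3\nabla\cdot\overline{V})$ by $\tfrac{\mu}{6h}\nabla([h^3-1,\mathrm{F}_3-1]\nabla\cdot\overline{V})$, and since $h^3-1=O(\epsilon)$ while $\mathrm{F}_3-1=O(\mu)$ (Proposition \ref{FourierMultiplierApproximations}), a commutator version of the product estimates \ref{product estimate} shows this term is $O(\mu^2\epsilon)$; adding it to the $O(\mu^2\epsilon)$ remainder of Proposition \ref{Proposition Approximation Vbar estimates} gives the claim. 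I would then differentiate this identity in time. Using the water waves equations to write $\partial_t\zeta=-\nabla\cdot(h\overline{V})$ and $\partial_t\psi$ explicitly, together with the standard control of $\partial_t\overline{V}$ coming from the shape-derivative formula for the averaged-velocity operator and the a priori bounds on the solution on $[0,T/\epsilon]$ (see \cite{WWP}), one checks that $\partial_t\rho_0$ stays bounded in the relevant Sobolev norm, whence $\partial_t((Id+\mu T[h])\overline{V})=\partial_t\nabla\psi+\mu^2\epsilon\,\rho_1$.

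It remains to rewrite $\partial_t\nabla\psi$. Applying $\nabla$ to the second water waves equation and simplifying it exactly as in Step 2 of the proof of Proposition \ref{Proposition consistency first full dispersion GN system} (using $\overline{V}=\nabla\psi+O(\mu)$ from Proposition \ref{Vbar moins nabla psi estimation}, the quotient estimates \ref{Quotient estimate}, the product estimates \ref{product estimate}, and the exact identity $-\nabla\cdot(h\nabla\psi)+\epsilon\nabla\zeta\cdot\nabla\psi=-h\Delta\psi$) gives $\partial_t\nabla\psi=-\nabla\zeta-\tfrac{\epsilon}{2}\nabla|\nabla\psi|^2+\tfrac{\mu\epsilon}{2}\nabla(h^2(\Delta\psi)^2)+\mu^2\epsilon\,\rho_2$. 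Now I substitute $\nabla\psi=(Id+\mu T[h])\overline{V}-\mu^2\epsilon\rho_0$ on the right-hand side: expanding $|\nabla\psi|^2=|\overline{V}|^2+2\mu\,\overline{V}\cdot T[h]\overline{V}+O(\mu^2)$ and recalling the definition of $T[h]$, the term $\tfrac{\epsilon}{2}\nabla|\nabla\psi|^2$ produces $\tfrac{\epsilon}{2}\nabla|\overline{V}|^2-\tfrac{\mu\epsilon}{6}\nabla(\tfrac{1}{h}\overline{V}\cdot\nabla(h^3\mathrm{F}_3[\nabla\cdot\overline{V}]+\mathrm{F}_3[h^3\nabla\cdot\overline{V}]))$ up to $O(\mu^2\epsilon)$; and since $\Delta\psi=\nabla\cdot\overline{V}+O(\mu)$ and $\mathrm{F}_3-1=O(\mu)$ while the prefactor is already $O(\mu\epsilon)$, $\tfrac{\mu\epsilon}{2}\nabla(h^2(\Delta\psi)^2)$ becomes $\tfrac{\mu\epsilon}{2}\nabla(h^2\mathrm{F}_3[\nabla\cdot\overline{V}]\,\nabla\cdot\overline{V})$ up to $O(\mu^2\epsilon)$. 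Collecting the terms yields the momentum equation of \eqref{SecondFullDispersionModel(InVbar)2} with a remainder $\mu^2\epsilon R_3$, and tracking the losses of derivatives through all the steps gives $|R_3|_{H^s}\le N(s+7)$, i.e. $n=6$.

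The step I expect to be the main obstacle is the time differentiation of the relation $(Id+\mu T[h])\overline{V}=\nabla\psi+O(\mu^2\epsilon)$: one has to be sure that applying $\partial_t$ does not spoil the $O(\mu^2\epsilon)$ gain, which forces one to bound $\partial_t\overline{V}$ — equivalently, to invoke shape-derivative estimates for the Dirichlet-to-Neumann operator — and to keep careful track of the associated loss of derivatives. Everything else is routine manipulation with the product, quotient and Fourier-multiplier estimates already at our disposal.
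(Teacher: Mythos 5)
Your overall skeleton coincides with the paper's: the mass equation is exact, the spatial terms of the momentum equation are handled by symmetrizing the last estimate of Proposition \ref{Proposition Approximation Vbar estimates} via a commutator argument (this is exactly Proposition \ref{proposition symmetrized approximation of nabla psi by Vbar} and Step 1 of the paper's proof), and the whole difficulty is concentrated in showing that $\partial_t\bigl((Id+\mu T[h])\overline{V}-\nabla\psi\bigr)=O(\mu^2\epsilon)$. You correctly identify this as the main obstacle, but the way you propose to discharge it does not work as stated, and this is a genuine gap. Knowing that $\partial_t\overline{V}$ is \emph{bounded} in some Sobolev norm (whether by shape-derivative estimates for the Dirichlet-to-Neumann operator or otherwise) only gives $\partial_t\rho_0$ multiplied by $\mu^2\epsilon$ equal to a difference of two $O(1)$ quantities; it does not show that this difference is itself $O(\mu^2\epsilon)$. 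An estimate on a quantity never transfers to its time derivative for free: one must exhibit the \emph{structure} of the remainder $\mu^2\epsilon\rho_0=\overline{V}-\widetilde{\overline{V}}_{\appr}$ and prove that its time derivative is small of the same order.

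This is precisely what the paper's Step 2 supplies and what is missing from your proposal. The remainder is written as an explicit integral in $z$ of $\nabla^{\mu}\widetilde u$ with $\widetilde u=\phi-\widetilde{\phi}_{\appr}$ (see \eqref{Definition of Vbar and Vbartildeapp}), so that controlling $\partial_t\rho_0$ reduces to controlling $\|\Lambda^{s+1}\nabla^{\mu}\partial_t\widetilde u\|_2$. The paper obtains the bound $\|\Lambda^s\nabla^{\mu}\partial_t\widetilde u\|_2\le\mu^2\epsilon N(s+5)$ by differentiating in time the elliptic boundary value problem \eqref{straightened laplacian of utilde} satisfied by $\widetilde u$, decomposing $\partial_t\widetilde u=v_1+v_2$ where $v_1$ solves the problem with source $-\mu^2\epsilon\,\partial_t R$ (controlled by Lemma \ref{lemma 3.43} together with the bounds on $\partial_t\zeta$ and $\partial_t\nabla\psi$ of Lemma \ref{Estimates on partial derivatives of zeta and psi}) and $v_2$ solves the problem with source $-\nabla^{\mu}\cdot(\partial_t P(\Sigma_t)\nabla^{\mu}\widetilde u)$ (controlled by Lemma \ref{lemme 2.38}); this yields Lemma \ref{Inequalities on partial t approximation of Vbar by nabla psi}, which is the real engine of the proof, and a further commutator step (Step 3) transfers the result to the symmetrized operator $T[h]$. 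Without this argument, or a fully worked-out substitute based on expansions of the shape derivatives of $\overline{V}[\epsilon\zeta]\psi$ at precision $O(\mu^2\epsilon)$ (which would be substantial new work, not available from the estimates already at your disposal), your proof is incomplete at its critical point.
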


\begin{myrem}
    The first equation of $\eqref{SecondFullDispersionModel(InVbar)2}$ is exact. There's nothing to prove for this one. All the work is on the second equation.
\end{myrem}

\begin{proof}
Let $\zeta$ and $\psi$ (so $\overline{V}$ through \eqref{VerticallyAveragedHorizontalVelocity}) be the solutions of the water waves system \eqref{WaterWavesEquations}. Using the notations of definition \ref{Definition consistency} we have in our case 
\begin{multline}\label{operator second full dispersion Green Naghdi system}
    \mathcal{N}_{(A')}^3(\zeta,\overline{V}) := \mu\partial_t(T[h]\overline{V}) +\nabla \zeta +\frac{\epsilon}{2} \nabla(|\overline{V}|^2) - \frac{\mu\epsilon}{6}\nabla(\frac{1}{h}\overline{V}\cdot \nabla(h^3\mathrm{F}_3[\nabla\cdot\overline{V}]+\mathrm{F}_3[h^3\nabla\cdot\overline{V}]))\\
    - \frac{\mu\epsilon}{2}\nabla(h^2\mathrm{F}_3[\nabla\cdot\overline{V}]\nabla\cdot\overline{V}).
\end{multline}
So we need to prove 
\begin{multline}\label{Consistency estimate second system}
        |\partial_t\overline{V} + \nabla\cdot(h\nabla\psi) + \mu\partial_t(T[h]\overline{V}) +\nabla \zeta +\frac{\epsilon}{2} \nabla(|\overline{V}|^2) - \frac{\mu\epsilon}{6}\nabla(\frac{1}{h}\overline{V}\cdot \nabla(h^3\mathrm{F}_3[\nabla\cdot\overline{V}]+\mathrm{F}_3[h^3\nabla\cdot\overline{V}]))\\ - \frac{\mu\epsilon}{2}\nabla(h^2\mathrm{F}_3[\nabla\cdot\overline{V}]\nabla\cdot\overline{V})|_{H^s} \leq \mu^2\epsilon N(s+7)
\end{multline}

\indent 

\noindent \underline{Step 1}: Here we focus on the terms of the second equation of \eqref{SecondFullDispersionModel(InVbar)2} which are not differentiated in time and prove that there exists $R_3 \in H^s(\mathbb{R}^d)$ such that $|R_3|_{H^s} \leq N(s+6)$ and 
\begin{multline}\label{What we want to prove step 1}
    \partial_t \nabla\psi + \nabla\zeta + \frac{\epsilon}{2}\nabla(|\overline{V}|^2)-\frac{\mu\epsilon}{6}\nabla(\frac{1}{h}\overline{V}\cdot\nabla(h^3\mathrm{F}_3[\nabla\cdot\overline{V}]+\mathrm{F}_3[h^3\nabla\cdot\overline{V}])) \\
    - \frac{\mu\epsilon}{2}\nabla(h^2\mathrm{F}_3[\nabla\cdot\overline{V}]\nabla\cdot\overline{V}) = \mu^2\epsilon R_3
\end{multline}
Taking $(\zeta,\psi)$ solutions of the water waves system \eqref{WaterWavesEquations} we proved in subsection 3.2 (see \eqref{First system second equation simplification 2}) that there exists a remainder $R_2 \in H^s(\mathbb{R}^d)$ such that $|R_2|_{H^s} \leq N(s+4)$ and
\begin{align}\label{First full dispersion GN system simplified}
    \partial_t \psi + \zeta + \frac{\epsilon}{2}|\nabla\psi|^2 - \frac{\mu\epsilon}{2}h^2(\Delta\psi)^2 = \mu^2\epsilon R_2.
\end{align}
Using proposition \ref{FourierMultiplierApproximations} for $\mathrm{F}_3$, also taking the gradient of equation \eqref{First full dispersion GN system simplified}, using proposition \ref{Vbar moins nabla psi estimation}, product estimate \ref{product estimate} and the boundedness of $\mathrm{F}_3$ we get the existence of $R_3 \in H^s(\mathbb{R}^d)$ such that $|R_3|_{H^s} \leq N(s+5)$ and
\begin{align*}
     \partial_t \nabla\psi + \nabla\zeta + \frac{\epsilon}{2}\nabla(|\nabla\psi|^2) - \frac{\mu\epsilon}{2}\nabla(h^2\mathrm{F}_3[\nabla\cdot\overline{V}]\nabla\cdot\overline{V}) = \mu^2\epsilon R_3.
\end{align*}

\noindent In order to go further we need a symmetrized version of \eqref{approximation of nabla psi by Vbar}.
\begin{mypp}\label{proposition symmetrized approximation of nabla psi by Vbar}
    Let $s \geq 0$, and $\zeta \in H^{s+5}(\mathbb{R}^d)$ be such that \eqref{FundamentalHypothesis} is satisfied. Let $\psi \in \dot{H}^{s+5}(\mathbb{R}^d)$, and $\overline{V}$ be as in \eqref{VerticallyAveragedHorizontalVelocity}. Let also $\mathrm{F}_3$ be the Fourier multiplier defined in proposition \ref{Proposition Approximation Vbar estimates}.\\
    The following estimate hold:
    \begin{align*}
        |\overline{V}-\nabla\psi - \frac{\mu}{6h}\nabla(h^3\mathrm{F}_3[\nabla\cdot\overline{V}]+\mathrm{F}_3[h^3\nabla\cdot\overline{V}])|_{H^s} \leq \mu^2\epsilon M(s+5)|\nabla\psi|_{H^{s+5}}.
    \end{align*}
\end{mypp}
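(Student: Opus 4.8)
The plan is to deduce this symmetrized estimate from the non-symmetrized one already established in proposition \ref{Proposition Approximation Vbar estimates with VbarApprox}, namely
\[
    |\overline{V}-\nabla\psi -\frac{\mu}{3h}\nabla(h^3 \mathrm{F}_3 [\nabla \cdot \overline{V}])|_{H^s} \leq \mu^2\epsilon M(s+4) |\nabla\psi|_{H^{s+4}},
\]
by showing that replacing $\frac{\mu}{3h}\nabla(h^3\mathrm{F}_3[\nabla\cdot\overline{V}])$ by the symmetric average $\frac{\mu}{6h}\nabla(h^3\mathrm{F}_3[\nabla\cdot\overline{V}]+\mathrm{F}_3[h^3\nabla\cdot\overline{V}])$ only perturbs the left-hand side by a term of size $O(\mu^2\epsilon)$. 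Since these two differ precisely by
\[
    D \colonequals \frac{\mu}{6h}\nabla\big(h^3\mathrm{F}_3[\nabla\cdot\overline{V}] - \mathrm{F}_3[h^3\nabla\cdot\overline{V}]\big),
\]
the whole proposition reduces, by the triangle inequality, to the bound $|D|_{H^s} \leq \mu^2\epsilon M(s+5)|\nabla\psi|_{H^{s+5}}$.

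The key step is an algebraic rewriting of the bracket appearing in $D$. Writing $h^3 = 1 + (h^3-1)$ and using the linearity of the Fourier multiplier $\mathrm{F}_3$, the contributions of the constant $1$ cancel and, after adding and subtracting $(h^3-1)\nabla\cdot\overline{V}$, so do the contributions of the symbol $1$ of $\mathrm{F}_3$, leaving the double-commutator form
\[
    h^3\mathrm{F}_3[\nabla\cdot\overline{V}] - \mathrm{F}_3[h^3\nabla\cdot\overline{V}] = (h^3-1)(\mathrm{F}_3-1)[\nabla\cdot\overline{V}] - (\mathrm{F}_3-1)\big[(h^3-1)\nabla\cdot\overline{V}\big].
\]
Each term now exhibits a factor $h^3-1$, with $|h^3-1|_{H^{\sigma}} \leq \epsilon M(\sigma)$, together with a factor $\mathrm{F}_3-1$ which, by the symbol bound $|F_3(\xi)-1|\lesssim \mu|\xi|^2$ supplied by proposition \ref{FourierMultiplierApproximations}, gains a power of $\mu$ at the cost of two derivatives, i.e.\ $\|\Lambda^{\sigma}(\mathrm{F}_3-1)f\|_2 \lesssim \mu\,|f|_{H^{\sigma+2}}$.

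Then I would estimate $D$ in $H^s$ by first peeling off the outer $\frac1h\nabla$ using the quotient estimates \ref{Quotient estimate} (the $1/h$ factor costing an $M(\cdot)$ constant, the gradient one derivative), and then applying the product estimates \ref{product estimate} to each of the two terms of the bracket in $H^{s+1}$: the factor $\mathrm{F}_3-1$ yields the gain $\mu$ and costs two derivatives, $h^3-1$ yields the gain $\epsilon$, and the remaining quantity $\nabla\cdot\overline{V}$ is controlled by proposition \ref{Vbar moins nabla psi estimation} (using, if convenient, the identity $\mathcal{G}^{\mu}\psi = -\mu\nabla\cdot(h\overline{V})$ so as not to spend an extra derivative on $\overline{V}$ for the divergence). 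This produces $|D|_{H^s} \leq \mu^2\epsilon\,M(s+5)\,|\nabla\psi|_{H^{s+5}}$, and combining with the displayed non-symmetrized estimate gives the proposition. The only genuinely delicate point is the derivative bookkeeping: one must check that the regularity lost across these successive product and quotient estimates and in the application of proposition \ref{Vbar moins nabla psi estimation} does not exceed the $s+5$ (and $\zeta\in H^{s+5}$) claimed, and that all constants are uniform in $\mu\in[0,\mu_{\max}]$; the symmetrization costs exactly one derivative more than the non-symmetrized estimate \eqref{approximation of nabla psi by Vbar}.
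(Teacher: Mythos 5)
Your proof is correct and its skeleton is the paper's: triangle inequality against the non-symmetrized estimate \eqref{approximation of nabla psi by Vbar}, followed by rewriting the symmetrization defect as $(h^3-1)(\mathrm{F}_3-1)[\nabla\cdot\overline{V}]-(\mathrm{F}_3-1)[(h^3-1)\nabla\cdot\overline{V}]$ with the outer $\frac{\mu}{6h}\nabla$ peeled off by quotient estimates. Where you diverge is the final estimation of this bracket. The paper treats it as a genuine commutator $[\mathrm{F}_3-1,h^3-1]\nabla\cdot\overline{V}$ and invokes Proposition \ref{Commutator estimates} with $\mathrm{F}_3-1$ of order $2$ and $\mathcal{N}^2(\mathrm{F}_3-1)\lesssim\mu$; the point of doing so is that a commutator with an order-$\sigma$ multiplier only costs $H^{\sigma-1}$ on the other factor, so the paper lands on $|\nabla\cdot\overline{V}|_{H^{s+2}}\leq|\overline{V}|_{H^{s+3}}\leq M(s+5)|\nabla\psi|_{H^{s+5}}$. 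Your term-by-term product estimate forgoes that gain: $|(\mathrm{F}_3-1)f|_{H^{s+1}}\lesssim\mu|f|_{H^{s+3}}$ forces you to control $|\nabla\cdot\overline{V}|_{H^{s+3}}$, and going through $|\overline{V}|_{H^{s+4}}$ and Proposition \ref{Vbar moins nabla psi estimation} would give $|\nabla\psi|_{H^{s+6}}$, one derivative over budget. So the parenthetical ``if convenient'' is in fact load-bearing: writing $\nabla\cdot\overline{V}=\frac{1}{h}\bigl(\nabla\cdot(h\overline{V})-\epsilon\nabla\zeta\cdot\overline{V}\bigr)=\frac{1}{h}\bigl(-\frac{1}{\mu}\mathcal{G}^{\mu}\psi-\epsilon\nabla\zeta\cdot\overline{V}\bigr)$ and using Proposition \ref{Theorem 3.15} is the one step that brings you back to $M(s+5)|\nabla\psi|_{H^{s+5}}$. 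With that step made explicit your argument closes at the stated regularity; the commutator route simply makes it unnecessary. Either way the mechanism producing the $\mu^2\epsilon$ gain is identical: one $\mu$ from the prefactor, one $\mu$ from $\mathrm{F}_3-1$, one $\epsilon$ from $h^3-1$.
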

\begin{proof}
    Using \eqref{approximation of nabla psi by Vbar} and quotient estimates \ref{Quotient estimate} we get 
    \begin{align*}
        &|\overline{V}-\nabla\psi - \frac{\mu}{6h}\nabla(h^3\mathrm{F}_3[\nabla\cdot\overline{V}]+\mathrm{F}_3[h^3\nabla\cdot\overline{V}])|_{H^s}\\
        \leq &|\overline{V}-\nabla\psi -\frac{\mu}{3h}\nabla(h^3 \mathrm{F}_3 \nabla \cdot \overline{V})|_{H^s} + |\frac{\mu}{3h}\nabla(h^3 \mathrm{F}_3 \nabla \cdot \overline{V})-\frac{\mu}{6h}\nabla(h^3\mathrm{F}_3[\nabla\cdot\overline{V}]+\mathrm{F}_3[h^3\nabla\cdot\overline{V}])|_{H^s}\\
        \leq &\mu^2\epsilon M(s+4) |\nabla\psi|_{H^{s+4}} + \mu M_0 |(h^3-1)(\mathrm{F}_3-1)[\nabla\cdot\overline{V}] - (\mathrm{F}_3-1)[(h^3-1)\nabla\cdot\overline{V}]|_{H^{s+1}}\\
        \leq &\mu^2\epsilon M(s+4) |\nabla\psi|_{H^{s+4}} + \mu M_0 |[\mathrm{F}_3-1;h^3-1]\nabla\cdot\overline{V}|_{H^{s+1}}.
    \end{align*}
    Moreover using commutator estimates \ref{Commutator estimates} with $\mathrm{F}_3-1$ Fourier multiplier of order $2$ and $\mathcal{N}^2(\mathrm{F}_3-1) \lesssim \mu$ (see definition \ref{definition order of a Fourier multiplier}). Using also proposition \ref{FourierMultiplierApproximations} for $\mathrm{F}_3$ and proposition \ref{Vbar moins nabla psi estimation}, we obtain
    \begin{align*}
        &|[\mathrm{F}_3-1;h^3-1]\nabla\cdot\overline{V}|_{H^{s+1}}\\
        \leq &|[(\mathrm{F}_3-1)\Lambda^{s+1},h^3-1]\nabla\cdot\overline{V}|_2 + |[\Lambda^{s+1},h^3-1](\mathrm{F}_3-1)[\nabla\cdot\overline{V}]|_2\\
        \leq &\mu\epsilon|\frac{h^3-1}{\epsilon}|_{H^{s+3}}|\nabla\cdot\overline{V}|_{H^{s+2}} + \mu\epsilon|\frac{h^3-1}{\epsilon}|_{H^{\max{(t_0+1,s+1)}}}|(\mathrm{F}_3-1)\nabla\cdot\overline{V}|_{H^{s}}\\
        \leq &\mu\epsilon M(s+3)|\overline{V}|_{H^{s+3}} \leq \mu\epsilon M(s+5)|\nabla\psi|_{H^{s+5}}.
    \end{align*}
    Here $t_0$ is a real number larger than $d/2$, see remark \ref{remark on t0}.   
\end{proof}
\noindent Having in mind this symmetrized approximation of $\nabla\psi$ by $\overline{V}$ we estimate
\begin{align*}
    &|\frac{\epsilon}{2}\nabla(|\nabla\psi|^2) - (\frac{\epsilon}{2}\nabla(|\overline{V}|^2)-\frac{\mu\epsilon}{6}\nabla(\frac{1}{h}\overline{V}\cdot\nabla(h^3\mathrm{F}_3[\nabla\cdot\overline{V}]+\mathrm{F}_3[h^3\nabla\cdot\overline{V}])))| \\
    \leq &|\frac{\epsilon}{2}\nabla(|\nabla\psi|^2) - \frac{\epsilon}{2}\nabla(|\overline{V}-\frac{\mu}{6h}\nabla(h^3\mathrm{F}_3[\nabla\cdot\overline{V}]+\mathrm{F}_3[h^3\nabla\cdot\overline{V}])|^2)|_{H^s} \\
    + &\frac{\mu^2\epsilon}{2}|\nabla(\frac{1}{6^2 h^2}|\nabla(h^3\mathrm{F}_3[\nabla\cdot\overline{V}]+\mathrm{F}_3[h^3\nabla\cdot\overline{V}])|^2)|_{H^s}\\
    \colonequals &I_1 + I_2.
\end{align*}
Remark that the second term of the last inequality $I_2$ is of order $O(\mu^2\epsilon)$. We will get an estimation of it a bit later.\\
For now let's use proposition \ref{proposition symmetrized approximation of nabla psi by Vbar}, product estimates \ref{product estimate}, and quotient estimates \ref{Quotient estimate} to estimate the first term $I_1$:
\begin{align*}
    &|\frac{\epsilon}{2}\nabla(|\nabla\psi|^2) - \frac{\epsilon}{2}\nabla(|\overline{V}-\frac{\mu}{6h}\nabla(h^3\mathrm{F}_3[\nabla\cdot\overline{V}]+\mathrm{F}_3[h^3\nabla\cdot\overline{V}])|^2)|_{H^s} \\
    \leq &\frac{\epsilon}{2}||\nabla\psi|^2 - |\overline{V}-\frac{\mu}{6h}\nabla(h^3\mathrm{F}_3[\nabla\cdot\overline{V}]+\mathrm{F}_3[h^3\nabla\cdot\overline{V}])|^2|_{H^{s+1}} \\
    \leq &\frac{\epsilon}{2}|\nabla\psi-(\overline{V}-\frac{\mu}{6h}\nabla(h^3\mathrm{F}_3[\nabla\cdot\overline{V}]+\mathrm{F}_3[h^3\nabla\cdot\overline{V}]))|_{H^{s+1}}\\
    \times &|\nabla\psi+\overline{V}-\frac{\mu}{6h}\nabla(h^3\mathrm{F}_3[\nabla\cdot\overline{V}]+\mathrm{F}_3[h^3\nabla\cdot\overline{V}])|_{H^{s+2}} \\
    \leq &\mu^2\epsilon^2 M(s+6)|\nabla\psi|_{H^{s+6}}  |\nabla\psi+\overline{V}-\frac{\mu}{6h}\nabla(h^3\mathrm{F}_3[\nabla\cdot\overline{V}]+\mathrm{F}_3[h^3\nabla\cdot\overline{V}])|_{H^{s+2}}.
\end{align*}
So to finish the second step of this proof we just need to estimate the two quantities allowing us to control $I_1$ and $I_2$:
\begin{align*}
    \begin{cases}
        |\nabla(\frac{1}{6^2 h^2}|\nabla(h^3\mathrm{F}_3[\nabla\cdot\overline{V}]+\mathrm{F}_3[h^3\nabla\cdot\overline{V}])|^2)|_{H^s}, \\
        |\nabla\psi+\overline{V}-\frac{\mu}{6h}\nabla(h^3\mathrm{F}_3[\nabla\cdot\overline{V}]+\mathrm{F}_3[h^3\nabla\cdot\overline{V}])|_{H^{s+2}}.
    \end{cases}
\end{align*}
For the first one we can use quotient estimate \ref{Quotient estimate} and product estimates \ref{product estimate} to show that
\begin{align*}
    |\nabla(\frac{1}{6^2 h^2}|\nabla(h^3\mathrm{F}_3[\nabla\cdot\overline{V}]+\mathrm{F}_3[h^3\nabla\cdot\overline{V}])|^2)|_{H^s} \leq &M(s+1)||\nabla(h^3\mathrm{F}_3[\nabla\cdot\overline{V}]+\mathrm{F}_3[h^3\nabla\cdot\overline{V}])|^2|_{H^{s+1}} \\
    \leq &M(s+1)|\nabla(h^3\mathrm{F}_3[\nabla\cdot\overline{V}]+\mathrm{F}_3[h^3\nabla\cdot\overline{V}])|_{H^{s+2}}^2.
\end{align*}
And for the second one we can use proposition \ref{Vbar moins nabla psi estimation}, quotient estimate \ref{Quotient estimate} and proposition \ref{Vbar moins nabla psi estimation} to get
\begin{align*}
    &|\nabla\psi+\overline{V}-\frac{\mu}{6h}\nabla(h^3\mathrm{F}_3[\nabla\cdot\overline{V}]+\mathrm{F}_3[h^3\nabla\cdot\overline{V}])|_{H^{s+2}} \\
    \leq & M(s+4)|\nabla\psi|_{H^{s+4}} +\mu M(s+2)|\nabla(h^3\mathrm{F}_3[\nabla\cdot\overline{V}]+\mathrm{F}_3[h^3\nabla\cdot\overline{V}])|_{H^{s+2}}.
\end{align*}
Hence for both quantities it only remains to estimate $|\nabla(h^3\mathrm{F}_3[\nabla\cdot\overline{V}]+\mathrm{F}_3[h^3\nabla\cdot\overline{V}])|_{H^{s+2}}$. Let's do it.\\
Remark that the following inequality on $F_3$ (see proposition \ref{Proposition Approximation Vbar estimates}) holds:
\begin{align}\label{Inequality F3}
    |F_3(\xi)| = |\frac{3}{\mu|\xi|^2}(\frac{\sqrt{\mu}|\xi|}{\tanh{(\sqrt{\mu}|\xi|)}} - 1)| \leq \frac{1}{1+\frac{\sqrt{\mu}|\xi|}{3}}.
\end{align}
Using product estimates \ref{product estimate}, \eqref{Inequality F3} and proposition \ref{Vbar moins nabla psi estimation}, we get
\begin{align*}
    &|\nabla(h^3\mathrm{F}_3[\nabla\cdot\overline{V}]+\mathrm{F}_3[h^3\nabla\cdot\overline{V}])|_{H^{s+2}} \\
    \leq &M(s+3)|\mathrm{F}_3[\nabla\cdot\overline{V}]|_{H^{s+3}}+|\mathrm{F}_3[h^3\nabla\cdot\overline{V}])|_{H^{s+3}} \\
    \leq &M(s+3)|\overline{V}|_{H^{s+3}} \leq M(s+5)|\nabla\psi|_{H^{s+5}}.
\end{align*}
Thus 
\begin{align*}
    &|\frac{\epsilon}{2}\nabla(|\nabla\psi|^2) - (\frac{\epsilon}{2}\nabla(|\overline{V}|^2)-\frac{\mu\epsilon}{6}\nabla(\frac{1}{h}\overline{V}\cdot\nabla(h^3\mathrm{F}_3[\nabla\cdot\overline{V}]+\mathrm{F}_3[h^3\nabla\cdot\overline{V}])))|_{H^s} \\
    \leq &\mu^2\epsilon C(M(s+6),|\nabla\psi|_{H^{s+6}}).
\end{align*}
And we proved \eqref{What we want to prove step 1}.

To prove the consistency of the water waves equations \eqref{WaterWavesEquations} with respect to the second full dispersion Green-Naghdi system \eqref{SecondFullDispersionModel(InVbar)2} at order $O(\mu^2\epsilon)$ it only remains to focus on the term differentiated in time and show that there exists $k\in \mathbb{N}$ with $k \leq 7$ such that
\begin{align*}
    |\partial_t (\nabla\psi-(\overline{V}-\frac{\mu}{6h} \nabla(h^3\mathrm{F}_3[\nabla\cdot\overline{V}]+\mathrm{F}_3[h^3\nabla\cdot\overline{V}])))|_{H^s} \leq \mu^2 \epsilon N(s+k).
\end{align*}
What we will prove is in fact
\begin{align*}
    &|\partial_t (\nabla\psi-(\overline{V}-\frac{\mu}{3h} \nabla(h^3\mathrm{F}_3[\nabla\cdot\overline{V}])))|_{H^s} \\
    + &|\partial_t(\frac{\mu}{3h} \nabla(h^3\mathrm{F}_3[\nabla\cdot\overline{V}]) - \frac{\mu}{6h} \nabla(h^3\mathrm{F}_3[\nabla\cdot\overline{V}]+\mathrm{F}_3[h^3\nabla\cdot\overline{V}]))|_{H^s}
    \leq \mu^2 \epsilon N(s+7).
\end{align*}

\indent 

\noindent \underline{Step 2}: We estimate first
\begin{align}\label{What we want to estimate 1}
    |\partial_t (\nabla\psi-(\overline{V}-\frac{\mu}{3h} \nabla(h^3\mathrm{F}_3[\nabla\cdot\overline{V}])))|_{H^s}. 
\end{align}
In that end we denote $\widetilde{u}=\phi-\widetilde{\phi}_{\appr}$, where $\phi$ is defined in definition \ref{Definitions Fondamentales} and $\widetilde{\phi}_{\appr}$ in \eqref{Second approximation of phi}.\\
\underline{Step 2.1}: Here we find a control on $\widetilde{u}$ and prove
\begin{align}\label{control on u tilde step 2.1}
    ||\Lambda^s\nabla^{\mu}\partial_t \widetilde{u}||_2 \leq \mu^2\epsilon N(s+5).
\end{align}
By definition of $\phi$ and $\widetilde{\phi}_{\appr}$ (see definition \ref{Definitions Fondamentales}) we know that $\widetilde{u}$ solves an elliptic problem: 
\begin{align}\label{straightened laplacian of utilde}
    \begin{cases}
    h\nabla^{\mu}\cdot P(\Sigma_t)\nabla^{\mu}(\widetilde{u}) = -\mu^2\epsilon R,\\
    \widetilde{u}|_{z=0}=0, \ \ \partial_z \widetilde{u}|_{z=-1}=0,
    \end{cases}
\end{align}
where 
\begin{multline}\label{Expression du reste}
    R = \frac{1}{\mu^2\epsilon} \Big[ \mu(h^2-1)(\mathrm{F}_0-1)\Delta\psi + \mu\Delta((h^2-1)(\mathrm{F}_0-1)\psi) \\
    + \mu\epsilon A(\nabla,\partial_z) (\mathrm{F}_0-1)\psi + \mu\epsilon A(\nabla,\partial_z)(h^2-1)(\mathrm{F}_0-1)\psi \Big].
\end{multline}
In add using proposition \ref{FourierMultiplierApproximations} for $\mathrm{F}_0$, and product estimates \ref{product estimate}, we have the following estimation of the remainder $R$:
\begin{align}\label{Estimates on R}
    ||\Lambda^s R||_2 \leq M(s+2) |\nabla\psi|_{H^{s+3}}.
\end{align}
Moreover we can differentiate in time the elliptic equation in \eqref{straightened laplacian of utilde} as follow
\begin{align*}
    &\partial_t( \nabla^{\mu}\cdot P(\Sigma_t)\nabla^{\mu}(\widetilde{u}))= -\mu^2\epsilon \partial_t R \\
    \iff & \nabla^{\mu}\cdot \partial_t(P(\Sigma_t))\nabla^{\mu}\widetilde{u} + \nabla^{\mu}\cdot P(\Sigma_t)\nabla^{\mu} \partial_t \widetilde{u} = -\mu^2\epsilon \partial_t R.
\end{align*}
where here $R = \eqref{Expression du reste}/h$ (I use the same notation for both remainders, thanks to quotient estimates \ref{Quotient estimate}, the previous estimation holds).\\
It invites us to denote $v:=\partial_t \widetilde{u}$ and decompose it into $v \colonequals v_1 + v_2$ where $v_1$ satisfy one elliptic problem
\begin{align}\label{first elliptic problem for v_1}
    \begin{cases}
        \nabla^{\mu}\cdot P(\Sigma_t) \nabla^{\mu} v_1 = -\mu^2\epsilon \partial_t R, \\
        v_1|_{z=0}=0, \ \ \partial_z v_1|_{z=-1}=0,
    \end{cases}
\end{align}
and $v_2$ satisfy, for $g\colonequals \partial_t P(\Sigma_t) \nabla^{\mu} \widetilde{u}$ (see definition \ref{Definitions Fondamentales} for the expression of $P(\Sigma_t)$), a second elliptic problem
\begin{align}\label{Second elliptic problem for v2}
    \begin{cases}
        \nabla^{\mu}\cdot P(\Sigma_t) \nabla^{\mu} v_2 = -\nabla^{\mu}\cdot g, \\
        v_2|_{z=0}=0, \ \ v_2|_{z=-1}=-e_z\cdot g|_{z=-1}.
    \end{cases}
\end{align}
Thanks to the lemma \ref{lemma 3.43} we have a control on $v_1$ given by
\begin{align}\label{Estimates on v_1}
    ||\Lambda^s \nabla^{\mu}v_1||_2 \leq \mu^2\epsilon M(s+1) ||\Lambda^s\partial_t R||_2.
\end{align}
And having an explicit form of $R$ we can easily find an estimation of $\partial_t R$ using quotient estimates \ref{Quotient estimate} and product estimates \ref{product estimate}:
\begin{align}\label{Estimates on partial t R}
    ||\Lambda^s\partial_t R||_2 \leq &C(M(s+2),|\partial_t \zeta|_{H^{s+3}})|(\nabla\psi,\partial_t \nabla\psi)|_{H^{s+3}}.
\end{align}
Then using the water waves equations \eqref{WaterWavesEquations} we obtain estimates on the partial derivatives in time of $\zeta$ and $\nabla\psi$.

\begin{mylem}\label{Estimates on partial derivatives of zeta and psi}
    Let $s \geq 0$, and $\zeta \in H^{s+4}(\mathbb{R}^d)$ . Let $\psi \in \dot{H}^{s+4}(\mathbb{R}^d)$.\\
    The two estimations hold:
    \begin{align*}
        \begin{cases}
             |\partial_t \zeta|_{H^{s+2}} \leq N(s+4),\\
             |\partial_t \nabla\psi|_{H^{s+2}} \leq N(s+4).
        \end{cases}
    \end{align*}
\end{mylem}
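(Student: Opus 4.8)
The idea is to read both bounds directly off the two equations of the water waves system \eqref{WaterWavesEquations}, using the continuity of the Dirichlet-to-Neumann operator (proposition \ref{Theorem 3.15}) together with the product estimates \ref{product estimate} and quotient estimates \ref{Quotient estimate}. Throughout, I use that $M$ is non-decreasing and that $s+2\geq 2$ so all indices appearing below exceed $t_0>d/2$, which legitimizes the product estimates.

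\textbf{First bound.} From the first equation of \eqref{WaterWavesEquations}, $\partial_t\zeta=\tfrac1\mu\mathcal G^\mu\psi$. I would apply proposition \ref{Theorem 3.15} directly to get a bound of the form $\tfrac1\mu|\mathcal G^\mu\psi|_{H^{s+2}}\leq M(s+3)|\nabla\psi|_{H^{s+3}}$, which is controlled by $N(s+4)$ since $|\nabla\psi|_{H^{s+3}}\leq|\nabla\psi|_{H^{s+4}}$ and $M(s+3)\leq M(s+4)$. (An alternative is to use $\partial_t\zeta=-\nabla\cdot(h\overline V)$ together with the bound on $\overline V$ in proposition \ref{Vbar moins nabla psi estimation} and the product estimates, but the route through $\mathcal G^\mu$ loses less regularity and is cleaner.)

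\textbf{Second bound.} Apply $\nabla$ to the second equation of \eqref{WaterWavesEquations} to obtain
\[
\partial_t\nabla\psi=-\nabla\zeta-\frac{\epsilon}{2}\nabla(|\nabla\psi|^2)+\frac{\epsilon}{\mu}\nabla\!\left(\frac{(\mathcal G^\mu\psi+\epsilon\mu\nabla\zeta\cdot\nabla\psi)^2}{2(1+\epsilon^2\mu|\nabla\zeta|^2)}\right),
\]
and estimate each term in $H^{s+2}$. The first term satisfies $|\nabla\zeta|_{H^{s+2}}\leq|\zeta|_{H^{s+3}}\leq M(s+3)$; the second satisfies $\tfrac{\epsilon}{2}|\nabla(|\nabla\psi|^2)|_{H^{s+2}}\lesssim\epsilon|\nabla\psi|_{H^{s+3}}^2$ by the product estimates. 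For the last term I would first factor the $\mu$ out of the numerator, writing $\mathcal G^\mu\psi+\epsilon\mu\nabla\zeta\cdot\nabla\psi=\mu\bigl(\tfrac1\mu\mathcal G^\mu\psi+\epsilon\nabla\zeta\cdot\nabla\psi\bigr)$, so that the term becomes $\tfrac{\epsilon\mu}{2}\nabla\bigl(\tfrac{(\frac1\mu\mathcal G^\mu\psi+\epsilon\nabla\zeta\cdot\nabla\psi)^2}{1+\epsilon^2\mu|\nabla\zeta|^2}\bigr)$; then the quotient estimate \ref{Quotient estimate} (the denominator is $\geq 1$), the product estimate \ref{product estimate}, and proposition \ref{Theorem 3.15} applied to $\tfrac1\mu\mathcal G^\mu\psi$ give a bound by $\epsilon\mu$ times an $M$-constant times $|\nabla\psi|$ in the relevant Sobolev norm. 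Collecting all three contributions yields $|\partial_t\nabla\psi|_{H^{s+2}}\leq N(s+4)$.

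\textbf{Main obstacle.} Everything reduces to already-established estimates, so there is no conceptual difficulty; the only point requiring care is the Sobolev-index bookkeeping in the last term of the second equation, where the gradient hitting the quadratic nonlinearity combined with the derivative loss in the continuity estimate for $\mathcal G^\mu$ is what forces the highest regularity, and one must check this stays within the assumed $H^{s+4}$ on $\zeta$ and $\dot H^{s+4}$ on $\psi$ (the denominator only entering through $M(\cdot)$-type constants). Managing that index count—and using the harmless $\mu$ prefactors to absorb slack—is the whole content of the proof; the rest is mechanical.
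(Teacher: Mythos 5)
Your overall strategy is exactly the paper's: read the bound on $\partial_t\zeta$ off the first equation of \eqref{WaterWavesEquations} via proposition \ref{Theorem 3.15}, and the bound on $\partial_t\nabla\psi$ off the second equation via the quotient estimate \ref{Quotient estimate}, the product estimate \ref{product estimate} and proposition \ref{Theorem 3.15}. The first bound is fine (the proposition actually yields $M(s+4)|\nabla\psi|_{H^{s+3}}$ rather than your $M(s+3)$, but this is still within $N(s+4)$).

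There is, however, a concrete problem at exactly the spot you flag as ``the whole content of the proof,'' and your proposed normalization does not close the index count. Writing the numerator as $\mu^2\bigl(\tfrac1\mu\mathcal G^\mu\psi+\epsilon\nabla\zeta\cdot\nabla\psi\bigr)^2$ forces you, after the outer $\nabla$, to estimate $\tfrac1\mu\mathcal G^\mu\psi$ in $H^{s+3}$; the only available bound (the first line of proposition \ref{Theorem 3.15}) gives $|\tfrac1\mu\mathcal G^\mu\psi|_{H^{s+3}}\leq M(s+5)|\nabla\psi|_{H^{s+4}}$, which requires $\zeta\in H^{s+5}$ and exceeds the target $N(s+4)$. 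The spare factor $\mu$ you mention cannot repair this: it absorbs powers of $\mu$, not Sobolev derivatives. The paper avoids the issue by distributing the $\mu$ symmetrically, writing the numerator as $\bigl(\tfrac{1}{\sqrt{\mu}}\mathcal G^\mu\psi+\epsilon\sqrt{\mu}\nabla\zeta\cdot\nabla\psi\bigr)^2$ and invoking the \emph{second} estimate of proposition \ref{Theorem 3.15}, namely $|\tfrac{1}{\sqrt{\mu}}\mathcal G^\mu\psi|_{H^{\sigma}}\leq\mu^{1/4}M(\sigma+1)|\nabla\psi|_{H^{\sigma}}$, which loses no derivative on $\psi$ and only one index on $\zeta$; with $\sigma=s+3$ this stays within $M(s+4)$ and $|\nabla\psi|_{H^{s+3}}$, hence within $N(s+4)$. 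Replace your factoring by this one and the rest of your argument goes through.
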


\begin{proof}
    For both estimations the tools are the same. We use the water waves equations \eqref{WaterWavesEquations}, product estimates \ref{product estimate}, quotient estimates \ref{Quotient estimate}, and proposition \ref{Theorem 3.15}.\\
    Let's first prove the inequality on $\partial_t\zeta$. Denoting $s'=s+2$ and $\mathfrak{P}$ the Fourier multiplier defined as $\mathfrak{P}\colonequals \frac{|D|}{(1+\sqrt{\mu}|D|)^{1/2}}$ (I recall that $|D|$ means $|\xi|$ in Fourier space) we have
    \begin{align*}
        |\partial_t \zeta|_{H^{s'}} = |\frac{1}{\mu}\mathcal{G}^{\mu}[\epsilon\zeta]\psi|_{H^{s'}} \leq &N(s'+2).
    \end{align*}
    On the other hand for $\partial_t\nabla\psi$ we get
    \begin{align*}
        |\partial_t \psi|_{H^{s'}} \leq &|\zeta|_{H^{s'}} + ||\nabla\psi|^2|_{H^{s'}} +  |\frac{(\frac{1}{\sqrt{\mu}}\mathcal{G}^{\mu}\psi + \epsilon\sqrt{\mu}\nabla\zeta\cdot\nabla\psi)^2}{1+\epsilon^2\mu|\nabla\zeta|^2}|_{H^{s'}} \\
        \leq &|\zeta|_{H^{s'}} + |\nabla\psi|_{H^{s'}}^2 + C(\mu_{\max},\frac{1}{h_{\min}},||\nabla\zeta|^2|_{H^{s'}})|(\frac{1}{\sqrt{\mu}}\mathcal{G}^{\mu}\psi + \epsilon\sqrt{\mu}\nabla\zeta\cdot\nabla\psi)^2|_{H^{s'}}\\
        \leq &|\zeta|_{H^{s'}} + |\nabla\psi|_{H^{s'}}^2 + C(\mu_{\max},\frac{1}{h_{\min}},|\zeta|_{H^{s'+1}},|\frac{1}{\sqrt{\mu}}\mathcal{G}^{\mu}\psi|_{H^{s'}},
        |\nabla\zeta\cdot\nabla\psi|_{H^{s'}})\\
        \leq &N(s'+1).
    \end{align*}
    Thus
    \begin{align*}
        |\partial_t\nabla\psi|_{H^{s'}} \leq |\partial_t\psi|_{H^{s'+1}} \leq N(s'+2).
    \end{align*}
\end{proof}
\noindent Using the previous lemma, mixed with \eqref{Estimates on v_1} and \eqref{Estimates on partial t R} we get the control on $v_1$:
\begin{align*}
     ||\Lambda^s\nabla^{\mu}v_1||_2 \leq \mu^2\epsilon N(s+5).
\end{align*}

To get the control on $v_2$ we use a classical result for solutions of elliptic problems such as \eqref{Second elliptic problem for v2}. It is the lemma \ref{lemme 2.38}. Using also the fact that $g \colonequals \partial_t P(\Sigma_t)\nabla^{\mu}\widetilde{u}$ (see definition \ref{Definitions Fondamentales} for an expression of $P(\Sigma_t)$) it gives:  
\begin{align*}
    ||\Lambda^s\nabla^{\mu}v_2||_2 \leq &M(s+1)||\Lambda^s g||_2 \\
    ||\Lambda^s h\nabla^{\mu}\cdot P(\Sigma_t)\nabla^{\mu}\widetilde{\phi}_{\appr}||_2 \leq \mu^2\epsilon M(s+2)|\nabla\psi|_{H^{s+3}}.
\end{align*}
But $\widetilde{u}$ solves an elliptic problem for which we can use lemma \ref{lemma 3.43}. Using also lemma \ref{Estimates on partial derivatives of zeta and psi} and \eqref{Estimates on R} we get
\begin{align}\label{control on v2}
    ||\Lambda^s\nabla^{\mu}v_2||_2 \leq &C(M(s+1),|\partial_t\zeta|_{H^{s+3}}) \mu^2\epsilon M(s+1)||\Lambda^s R||_2 \leq N(s+5)
\end{align}
At the end, joining together the control on $v_1$ \eqref{Estimates on v_1} and the one on $v_2$ \eqref{control on v2} we get \eqref{control on u tilde step 2.1}. 

\noindent \underline{Step 2.2}: We can now give the control on \eqref{What we want to estimate 1}.
To do that we will first use \eqref{control on u tilde step 2.1} to prove the following lemma.
\begin{mylem}\label{Inequalities on partial t approximation of Vbar by nabla psi}
     Let $s \geq 0$, and $\zeta \in H^{s+6}(\mathbb{R}^d)$ be such that \eqref{FundamentalHypothesis} is satisfied. Let $\psi \in \dot{H}^{s+7}(\mathbb{R}^d)$, and $\overline{V}$ be as in \eqref{VerticallyAveragedHorizontalVelocity}. Let also $\mathrm{F}_1$ and $\mathrm{F}_2$ be the Fourier multipliers defined in proposition \ref{Proposition Approximation Vbar estimates}.\\
    The following estimates hold:
    \begin{align}\label{Lemme 4.6}
        \begin{cases}
            |\partial_t(\overline{V}-\mathrm{F}_1\nabla\psi)|_{H^s} \leq \mu\epsilon N(s+4),\\
             |\partial_t(\overline{V}-\nabla\psi-\frac{\mu}{3h}\nabla(h^3\mathrm{F}_2[\Delta\psi]))|_{H^s} \leq \mu^2 \epsilon N(s+6).
        \end{cases}
    \end{align}
\end{mylem}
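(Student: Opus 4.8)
The plan is to time-differentiate the identities used to prove Proposition~\ref{Proposition Approximation Vbar estimates with VbarApprox}, replacing each static elliptic estimate by its time-differentiated counterpart. The key inputs will be the control \eqref{control on u tilde step 2.1} on $\partial_t\widetilde{u}=\partial_t(\phi-\widetilde{\phi}_{\appr})$ already established in Step~2.1, a similar control on $\partial_t(\phi-\phi_0)$, and Lemma~\ref{Estimates on partial derivatives of zeta and psi} to absorb the factors $\partial_t\zeta$, $\partial_t\nabla\zeta$, $\partial_t\nabla\psi$ produced whenever $\partial_t$ hits a coefficient.

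\textbf{Second estimate.} By \eqref{second approximation of V bar} and \eqref{Definition of Vbar and Vbartildeapp} one has the exact identity
\[
\overline{V}-\nabla\psi-\frac{\mu}{3h}\nabla(h^3\mathrm{F}_2[\Delta\psi]) \;=\; \overline{V}-\widetilde{\overline{V}}_{\appr} \;=\; \frac{1}{h}\int_{-1}^{0}\big[h\nabla\widetilde{u}-(z\nabla h+\epsilon\nabla\zeta)\partial_z\widetilde{u}\big]\dz .
\]
I would differentiate in $t$ and distribute the derivative. The terms in which $\partial_t$ falls on $\widetilde{u}$ are bounded, after the Poincaré inequality $|\int_{-1}^{0}v\dz|_{H^s}\le\|\Lambda^s\nabla^{\mu}v\|_2$ and the product and quotient estimates~\ref{product estimate},~\ref{Quotient estimate}, by $M(s+1)\|\Lambda^{s+1}\nabla^{\mu}\partial_t\widetilde{u}\|_2$, which is $\le\mu^2\epsilon N(s+6)$ by \eqref{control on u tilde step 2.1}. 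The terms in which $\partial_t$ falls on $1/h$, $h$, $\nabla h$ or $\nabla\zeta$ carry a factor $\partial_t\zeta$ or $\partial_t\nabla\zeta$, bounded by Lemma~\ref{Estimates on partial derivatives of zeta and psi}, times $\nabla^{\mu}\widetilde{u}$ or $\widetilde{u}$, which are already of size $O(\mu^2\epsilon)$ by \eqref{Estimation on utilde}. Summing up gives the claimed bound.

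\textbf{First estimate.} Since the target loses only four derivatives, I would not route through $\widetilde{\overline{V}}_{\appr}$ (whose elliptic source costs three derivatives of $\nabla\psi$) but mimic Step~3 of the rigorous derivation, where $\mathrm{F}_1\nabla\psi=\overline{V}_0+\frac{\epsilon\nabla\zeta}{h}\int_{-1}^{0}(z+1)\partial_z\mathrm{F}_0\psi\dz$ with $\overline{V}_0=\frac{1}{h}\int_{-1}^{0}[h\nabla\phi_0-\epsilon(z+1)\nabla\zeta\partial_z\phi_0]\dz$, and where $\phi-\phi_0$ solves $\nabla^{\mu}\cdot P(\Sigma_t)\nabla^{\mu}(\phi-\phi_0)=-\frac{\mu\epsilon}{h}A(\nabla,\partial_z)\mathrm{F}_0\psi$ with homogeneous boundary data (see \eqref{definition operator A}, \eqref{phi0}). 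I would differentiate this elliptic equation in $t$ and decompose $\partial_t(\phi-\phi_0)=w_1+w_2$ exactly as the decomposition $v=v_1+v_2$ in Step~2.1: $w_1$ solves the same problem with right-hand side $-\mu\epsilon\,\partial_t\big(\frac{1}{h}A(\nabla,\partial_z)\mathrm{F}_0\psi\big)$ and homogeneous boundary data, while $w_2$ solves a $v_2$-type problem with source $\nabla^{\mu}\cdot\big(\partial_t P(\Sigma_t)\nabla^{\mu}(\phi-\phi_0)\big)$ and the corresponding inhomogeneous boundary datum. Invoking Lemma~\ref{lemma 3.43} for $w_1$, Lemma~\ref{lemme 2.38} for $w_2$, Lemma~\ref{Estimates on partial derivatives of zeta and psi}, the static bound $\|\Lambda^s\nabla^{\mu}(\phi-\phi_0)\|_2\le\mu\epsilon M(s+2)|\nabla\psi|_{H^{s+1}}$ and the pointwise bounds $|F_0-1|+|\partial_zF_0|+|\partial_z^2F_0|\lesssim|\xi|^2$ from Step~1, one obtains $\|\Lambda^s\nabla^{\mu}\partial_t(\phi-\phi_0)\|_2\le\mu\epsilon N(s+3)$. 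Together with the Poincaré inequality this controls $\partial_t(\overline{V}-\overline{V}_0)$ at order $\mu\epsilon N(s+4)$, while $\partial_t\big(\frac{\epsilon\nabla\zeta}{h}\int_{-1}^{0}(z+1)\partial_z\mathrm{F}_0\psi\dz\big)$ is handled directly by distributing $\partial_t$, using Lemma~\ref{Estimates on partial derivatives of zeta and psi} and the fact, already used in the rigorous derivation, that $\int_{-1}^{0}(z+1)\partial_z\mathrm{F}_0\dz=1-\mathrm{F}_1$ is a Fourier multiplier of size $O(\mu)$. Summation then yields $|\partial_t(\overline{V}-\mathrm{F}_1\nabla\psi)|_{H^s}\le\mu\epsilon N(s+4)$.

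\textbf{Main obstacle.} The delicate point is the time-differentiated elliptic estimate for $\partial_t(\phi-\phi_0)$: it cannot be obtained merely by writing $\phi-\phi_0=\widetilde{u}+(h^2-1)(\mathrm{F}_0-1)\psi$ and invoking the bound on $\partial_t\widetilde{u}$ without spoiling the sharp derivative count, and its $w_2$-part solves an elliptic problem whose boundary datum $-e_z\cdot\big(\partial_t P(\Sigma_t)\nabla^{\mu}(\phi-\phi_0)\big)\big|_{z=-1}$ is non-zero, so it must be treated with the general elliptic estimate of Lemma~\ref{lemme 2.38} rather than Lemma~\ref{lemma 3.43}. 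Besides this, one must keep careful track of the loss of regularity — two derivatives from Lemma~\ref{Estimates on partial derivatives of zeta and psi}, one further from each Poincaré and product step — so that the remainders land on $N(s+4)$, respectively $N(s+6)$; everything else is a routine application of the product, quotient and Sobolev embedding estimates.
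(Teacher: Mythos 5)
Your proposal is correct and follows essentially the same route as the paper: for the second estimate the paper likewise time-differentiates the integral identity $\overline{V}-\widetilde{\overline{V}}_{\appr}=\frac{1}{h}\int_{-1}^{0}[h\nabla\widetilde{u}-(z\nabla h+\epsilon\nabla\zeta)\partial_z\widetilde{u}]\,\dz$ and combines Poincar\'e with the control \eqref{control on u tilde step 2.1} and Lemma \ref{Estimates on partial derivatives of zeta and psi}, and for the first estimate the paper simply says to repeat Step 2.1 with $\widetilde{u}=\phi-\phi_0$, which is exactly your $w_1+w_2$ argument (you merely spell out the boundary-term and $1-\mathrm{F}_1$ details the paper leaves implicit, and your intermediate bound $\mu\epsilon N(s+3)$ is arguably one derivative optimistic, though this does not affect the stated conclusion).
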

\begin{proof} Let's first prove the second inequality. If we denote $\mu^2\epsilon R = \overline{V}-\widetilde{\overline{V}}_{\appr}$, having in mind $\overline{V}$ and $\widetilde{\overline{V}}_{\appr}$ written as in \eqref{Definition of Vbar and Vbartildeapp}, and the fact that through computations $\widetilde{\overline{V}}_{\appr} =\nabla\psi + \frac{1}{h}\nabla(h^3(\frac{\tanh{(\sqrt{\mu}|D|)}}{\sqrt{\mu}|D|} -1)\psi))$ (see \eqref{VbarTildeApp expression}), we have the following equality
\begin{align*}
    &|\partial_t(\overline{V}-\nabla\psi-\frac{1}{h}\nabla(h^3(\frac{\tanh{(\sqrt{\mu}|D|)}}{\sqrt{\mu}|D|} -1)\psi))|_{H^s} = \mu^2\epsilon |\partial_t R|_{H^s} \\
    = &|\int_{-1}^{0} (\nabla\partial_t\widetilde{u} - \frac{1}{h}(z\nabla h+\epsilon\nabla\zeta)\partial_z \partial_t \widetilde{u}) dz - \int_{-1}^{0} \partial_t(\frac{z\nabla h + \epsilon\nabla\zeta}{h})\partial_z \widetilde{u} dz|_{H^s}.\\
\end{align*}
So Poincaré's inequality (page 40 of \cite{WWP}) mixed up with product and quotient estimates \ref{product estimate}, \ref{Quotient estimate}, leads us to
\begin{align*}
    \mu^2\epsilon|\partial_t R|_{H^s} \leq ||\Lambda^{s+1}\nabla^{\mu}\partial_t\widetilde{u}||_2 \leq \mu^2 \epsilon N(s+6).  
\end{align*}
Moreover we defined $\mathrm{F}_2$ as
\begin{align*}
    (\frac{\tanh{(\sqrt{\mu}|D|)}}{\sqrt{\mu}|D|}-1)\psi = -\frac{\mu}{3}|D|^2 \mathrm{F}_2 \psi.
\end{align*}
Hence we come up with the estimation
\begin{align}\label{Estimation Partial t Vbar minus approximation nabla psi}
    |\partial_t(\overline{V}-\nabla\psi-\frac{\mu}{3h}\nabla(h^3\mathrm{F}_2[\Delta\psi]))|_{H^s} \leq \mu^2 \epsilon N(s+6).
\end{align}
    To prove the first inequality of \eqref{Lemme 4.6} we just need to do the same reasoning from step 2.1 to this point but taking instead $\widetilde{u} = \phi-\phi_0$, where $\phi_0$ is defined in \eqref{phi0}.
\end{proof}

\noindent Having in mind this proposition, we decompose \eqref{What we want to estimate 1} in two parts:
\begin{align*}
    &|\partial_t(\overline{V}-\nabla\psi-\frac{\mu}{3h}\nabla(h^3\mathrm{F}_2 \mathrm{F}_1^{-1}[\nabla\cdot\overline{V}]))|_{H^s} \\
    \leq & |\partial_t(\frac{\mu}{3h}\nabla(h^3\mathrm{F}_3[\nabla\cdot(\overline{V}-\mathrm{F}_1\nabla\psi)]))|_{H^s} + |\partial_t(\overline{V} - \nabla\psi -\frac{\mu}{3h}\nabla(h^3\mathrm{F}_2[\Delta\psi]))|_{H^s}.
\end{align*}
The bound on the second term is given by the second inequality of proposition \ref{Inequalities on partial t approximation of Vbar by nabla psi}.\\
The first term can be decomposed in three parts:
\begin{align*}
    &|\partial_t(\frac{\mu}{3h}\nabla(h^3\mathrm{F}_3[\nabla\cdot(\overline{V}-\mathrm{F}_1\nabla\psi)]))|_{H^s} \\
    \leq &|\frac{\mu\epsilon}{3h^2}\partial_t \zeta \nabla(h^3\mathrm{F}_3[\nabla\cdot(\overline{V}-\mathrm{F}_1\nabla\psi)])|_{H^s} + |\frac{\mu\epsilon}{3h}\nabla(3h^2\partial_t\zeta \mathrm{F}_3[\nabla\cdot(\overline{V}-\mathrm{F}_1\nabla\psi)])|_{H^s} \\
    + &|\frac{\mu}{3h}\nabla(h^3\mathrm{F}_3[\nabla\cdot\partial_t(\overline{V}-\mathrm{F}_1\nabla\psi)])|_{H^s}.
\end{align*}
Each of this terms are bounded using quotient estimates \ref{Quotient estimate}, product estimates \ref{product estimate}, proposition \ref{Vbar moins nabla psi estimation} and the first inequality of proposition \ref{Inequalities on partial t approximation of Vbar by nabla psi}.\\
At the end we get what we wanted to prove in this step 2: 
\begin{align*}
    |\partial_t(\overline{V}-\nabla\psi-\frac{\mu}{3h}\nabla(h^3\mathrm{F}_2 \mathrm{F}_1^{-1}[\nabla\cdot\overline{V}]))|_{H^s} \leq \mu^2 \epsilon N(s+6).
\end{align*}

\indent 

\noindent \underline{Step 3}: The last step is to bound 
\begin{align}\label{What we want to estimate 2}
    |\partial_t(\frac{\mu}{3h} \nabla(h^3\mathrm{F}_3[\nabla\cdot\overline{V}]) - \frac{\mu}{6h} \nabla(h^3\mathrm{F}_3[\nabla\cdot\overline{V}]+\mathrm{F}_3[h^3\nabla\cdot\overline{V}]))|_{H_s}
\end{align}
The main key is commutator estimates \ref{Commutator estimates}.
We decompose \eqref{What we want to estimate 2} into three parts:
\begin{align*}
     &|\partial_t(\frac{\mu}{3h} \nabla(h^3\mathrm{F}_3[\nabla\cdot\overline{V}]) - \frac{\mu}{6h} \nabla(h^3\mathrm{F}_3[\nabla\cdot\overline{V}]+\mathrm{F}_3[h^3\nabla\cdot\overline{V}]))|_{H_s} \\
     \leq &\mu\epsilon |\frac{\partial_t \zeta}{h^2}[\nabla(h^3-1)(\mathrm{F}_3-1)[\nabla\cdot\overline{V}]-(\mathrm{F}_3-1)[h^3-1\nabla\cdot\overline{V}])]|_{H^s} \\
     + &\mu\epsilon |\frac{1}{h}\nabla(\partial_t\zeta h^2 (\mathrm{F}_3-1)[\nabla\cdot\overline{V}]-(\mathrm{F}_3-1)[\partial_t\zeta h^2\nabla\cdot\overline{V}])|_{H^s} \\
     + &\mu\epsilon|\frac{1}{h}\nabla(\frac{h^3-1}{\epsilon}(\mathrm{F}_3-1)[\nabla\cdot\partial_t\overline{V}]-(\mathrm{F}_3-1)[\frac{h^3-1}{\epsilon}\nabla\cdot\partial_t \overline{V}])|_{H^s}\\
     \colonequals &T_1 + T_2 + T_3.
\end{align*}
Using quotient estimates \ref{Quotient estimate}, product estimates \ref{product estimate} and lemma \ref{Estimates on partial derivatives of zeta and psi} we get
\begin{align*}
    \begin{cases}
        T_1 \leq \mu\epsilon M(s+4)|[\mathrm{F}_3-1,h^3-1]\nabla\cdot\overline{V}|_{H^{s+1}},\\
        T_2 \leq \mu\epsilon M_0|[\mathrm{F}_3-1,\partial_t\zeta h^2]\nabla\cdot\overline{V}|_{H^{s+1}},\\
        T_3 \leq \mu\epsilon M_0|[\mathrm{F}_3-1,\frac{h^3-1}{\epsilon}]\nabla\cdot\partial_t\overline{V}|_{H^{s+1}}.
    \end{cases}
\end{align*}
But using the fact that for any $s\geq 0$ the operator $\Lambda^s$ and $\mathrm{F}_3-1$ commute we have
\begin{align*}
    \begin{cases}  
        |[\mathrm{F}_3-1,h^3-1]\nabla\cdot\overline{V}|_{H^{s+1}} &\leq |[(\mathrm{F}_3-1)\Lambda^{s+1},h^3-1]\nabla\cdot\overline{V}|_2 \\ &+|[\Lambda^{s+1},h^3-1](\mathrm{F}_3-1)[\nabla\cdot\overline{V}]|_2,\\
        |[\mathrm{F}_3-1,\partial_t\zeta h^2]\nabla\cdot\overline{V}|_{H^{s+1}} &\leq |[(\mathrm{F}_3-1)\Lambda^{s+1},\partial_t\zeta h^2]\nabla\cdot\overline{V}|_2 \\ &+|[\Lambda^{s+1},\partial_t\zeta h^2](\mathrm{F}_3-1)[\nabla\cdot\overline{V}]|_2,\\
        |[\mathrm{F}_3-1,\frac{h^3-1}{\epsilon}]\nabla\cdot\partial_t\overline{V}|_{H^{s+1}} &\leq |[(\mathrm{F}_3-1)\Lambda^{s+1},\frac{h^3-1}{\epsilon}]\nabla\cdot\partial_t\overline{V}|_2 \\
        &+|[\Lambda^{s+1},\frac{h^3-1}{\epsilon}](\mathrm{F}_3-1)[\nabla\cdot\partial_t\overline{V}]|_2.
    \end{cases}
\end{align*}
So using commutator estimates \ref{Commutator estimates} with $\mathrm{F}_3-1$ of order $2$ and $\mathcal{N}^2(\mathrm{F}_3-1) \lesssim \mu$ (see definition \ref{definition order of a Fourier multiplier} for the definition of $\mathcal{N}^2(\mathrm{F}_3-1)$), product estimates \ref{product estimate}, lemma \ref{Estimates on partial derivatives of zeta and psi}, proposition \ref{Vbar moins nabla psi estimation} and the first inequality in proposition \ref{Inequalities on partial t approximation of Vbar by nabla psi} we obtain
\begin{align*}
    \begin{cases}
        T_1 \leq \mu\epsilon M(s+4)(\mu|h^3-1|_{H^{s+3}}|\nabla\cdot\overline{V}|_{H^{s+2}} + |h^3-1|_{H^{\max{(t_0+1,s+1)}}}|(\mathrm{F}_3-1)[\nabla\cdot\overline{V}]|_{H^s}), \\
        T_2 \leq \mu\epsilon M(s+4)(\mu|\partial_t\zeta h^2|_{H^{s+3}}|\nabla\cdot\overline{V}|_{H^{s+2}} + |\partial_t\zeta h^2|_{H^{\max{(t_0+1,s+1)}}}|(\mathrm{F}_3-1)[\nabla\cdot\overline{V}]|_{H^s}),\\
        T_3 \leq \mu\epsilon M_0(\mu|\frac{h^3-1}{\epsilon}|_{H^{s+3}}|\nabla\cdot\partial_t\overline{V}|_{H^{s+2}} + |\nabla\frac{h^3-1}{\epsilon}|_{H^{\max{(t_0+1,s+1)}}}|(\mathrm{F}_3-1)[\nabla\cdot\partial_t\overline{V}]|_{H^s}).
    \end{cases}
\end{align*}
Here $t_0$ is a real number larger than $t_0$, see remark \ref{remark on t0}.

\noindent Hence using product estimates \ref{product estimate}, lemma \ref{Estimates on partial derivatives of zeta and psi}, proposition \ref{Vbar moins nabla psi estimation} and the first inequality in proposition \ref{Inequalities on partial t approximation of Vbar by nabla psi} we end up with
\begin{align*}
    \begin{cases}
        T_1 \leq \mu^2\epsilon M(s+5) |\nabla\psi|_{H^{s+5}} \leq N(s+5),\\
        T_2 \leq \mu^2\epsilon M(s+5) |\nabla\psi|_{H^{s+5}} \leq N(s+5),\\
        T_3 \leq \mu^2\epsilon N(s+7).
    \end{cases}
\end{align*}

It finishes the step 3 and the proof of proposition \ref{Consistency second full dispersion GN system section 4}, i.e. we proved the consistency of the water waves at order $O(\mu^2\epsilon)$ in the shallow water regime with the full dispersion Green-Naghdi system \eqref{SecondFullDispersionModel(InVbar)2} (with $n=6$).
\end{proof}

\begin{myrem}
    The $n=6$ regularity asked for deriving \eqref{SecondFullDispersionModel(InVbar)2} appeared only in the last step of the proof when we wanted to pass from a non-symmetric system to a symmetric one. Only $n=5$ is asked for the solutions of the water waves equations \eqref{WaterWavesEquations} to prove the consistency at order $O(\mu^2\epsilon)$ with respect to the system
    \begin{align}\label{Non symmetric system}
        \begin{cases}  
        \partial_t \zeta + \nabla \cdot(h\overline{V})=0, \\
        \partial_t(\overline{V}-\frac{\mu}{3h}\nabla(h^3\mathrm{F}_3[\nabla\cdot\overline{V}]) ) +\nabla \zeta &+\frac{\epsilon}{2} \nabla(|\overline{V}|^2) - \frac{\mu\epsilon}{3}\nabla(\frac{1}{h}\overline{V}\cdot \nabla(h^3\mathrm{F}_3[\nabla\cdot\overline{V}])) \\
        &- \frac{\mu\epsilon}{2}\nabla(h^2\mathrm{F}_3[\nabla\cdot\overline{V}]\nabla\cdot\overline{V}) =0.
        \end{cases}
    \end{align}
    However system \eqref{Non symmetric system} does not have a Hamiltonian formulation.
\end{myrem}

\label{Consistency with respect to the water waves equations}

\section{Consistency of other full dispersion models appearing in the literature}
\label{Consistency of the full dispersion models appearing in the literature}

\subsection{Full dispersion Green-Naghdi system}
In \eqref{approximation of grad psi in terms of V bar symmetrised version} we chose a kind of symmetrization which were naturally induced by an analogy with the one appearing in the first full dispersion Green-Naghdi system \eqref{first full dispersion model (in psi)} we derived in this paper. Another kind of symmetrization appears in the literature for a full dispersion Green-Naghdi system, see \cite{DucheneIsrawiTalhouk16}. They introduced a two-layer full dispersion Green-Naghdi system with surface tension in order to be able to study high-frequency Kevin-Helmholtz instabilities. In dimension $d=2$, without surface tension, their system for a one-layer fluid is
\begin{align}\label{Full dispersion Green Nagdhi model article Vincent}
     \begin{cases}  
        \partial_t \zeta + \nabla \cdot(h\overline{V})=0, \\
        \partial_t(\overline{V}-\frac{\mu}{3h}\nabla(\sqrt{\mathrm{F}_3}h^3\sqrt{\mathrm{F}_3}[\nabla\cdot\overline{V}]) ) +\nabla \zeta &+\frac{\epsilon}{2} \nabla(|\overline{V}|^2) - \frac{\mu\epsilon}{3}\nabla(\frac{1}{h}\overline{V}\cdot \nabla(\sqrt{\mathrm{F}_3}h^3\sqrt{\mathrm{F}_3}[\nabla\cdot\overline{V}])) \\
        &- \frac{\mu\epsilon}{2}\nabla(h^2\mathrm{F}_3[\nabla\cdot\overline{V}]\nabla\cdot\overline{V}) =0.
    \end{cases}
\end{align}
\begin{mypp}
    Let $\mathrm{F}_3$ be the Fourier multiplier defined in proposition \ref{Proposition Approximation Vbar estimates}. The water waves equations are consistent at order $O(\mu^2\epsilon)$ in the shallow water regime $\mathcal{A}$ with the system \eqref{Full dispersion Green Nagdhi model article Vincent}.
\end{mypp}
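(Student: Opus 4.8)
The plan is to reduce everything to the consistency result already established for the non-symmetric system \eqref{Non symmetric system}. Recall from the remark following proposition \ref{Consistency second full dispersion GN system section 4} that the water waves equations \eqref{WaterWavesEquations} are consistent at order $O(\mu^2\epsilon)$ in the shallow water regime $\mathcal{A}$ with \eqref{Non symmetric system} (for solutions of \eqref{WaterWavesEquations} with $n=5$). Comparing \eqref{Full dispersion Green Nagdhi model article Vincent} with \eqref{Non symmetric system} term by term: the first equations coincide and are exact, the nonlinear term $-\frac{\mu\epsilon}{2}\nabla(h^2\mathrm{F}_3[\nabla\cdot\overline{V}]\nabla\cdot\overline{V})$ is common to both, and the only discrepancies are that the operator $h^3\mathrm{F}_3[\nabla\cdot\,\cdot\,]$ appearing inside the time-differentiated dispersive term $\frac{\mu}{3h}\nabla(\cdot)$ and inside the $O(\mu\epsilon)$ nonlinear dispersive term $\frac{\mu\epsilon}{3}\nabla(\frac1h\overline{V}\cdot\nabla(\cdot))$ is replaced by the symmetrized operator $\sqrt{\mathrm{F}_3}h^3\sqrt{\mathrm{F}_3}[\nabla\cdot\,\cdot\,]$. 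So it suffices to show that, on solutions $(\zeta,\psi)$ of \eqref{WaterWavesEquations}, replacing one symmetrization by the other changes the left-hand side of the second equation only by a remainder of order $O(\mu^2\epsilon)$ in the sense of definition \ref{Definition consistency}.

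The algebraic key is the identity $h^3\mathrm{F}_3[W]-\sqrt{\mathrm{F}_3}h^3\sqrt{\mathrm{F}_3}[W]=-[\,\sqrt{\mathrm{F}_3}-1\,,\,h^3-1\,]\,\sqrt{\mathrm{F}_3}[W]$, valid for any sufficiently regular $W$: expanding $\mathrm{F}_3=(1+(\sqrt{\mathrm{F}_3}-1))^2$ and $h^3=1+(h^3-1)$ and cancelling every term that does not involve both $\sqrt{\mathrm{F}_3}-1$ and $h^3-1$ leaves exactly this commutator. Thus the difference of the two symmetrizations is a single commutator of precisely the type handled in \underline{Step 3} of the proof of proposition \ref{Consistency second full dispersion GN system section 4}, where the multiplier was $\mathrm{F}_3-1$ and here it is $\sqrt{\mathrm{F}_3}-1$ (the extra $\sqrt{\mathrm{F}_3}$ on the argument is bounded and commutes with $\Lambda^s$, hence harmless). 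As there, I would note that $\sqrt{\mathrm{F}_3}$ is a smooth, bounded, positive Fourier multiplier (since $\mathrm{F}_1,\mathrm{F}_2>0$, hence $\mathrm{F}_3>0$), that it commutes with $\Lambda^s$, and that from the Taylor expansion of $\mathrm{F}_3$ at the origin one gets $|\sqrt{\mathrm{F}_3}(\xi)-1|\lesssim\mu|\xi|^2$; hence $\sqrt{\mathrm{F}_3}-1$ is a Fourier multiplier of order $2$ with $\mathcal{N}^2(\sqrt{\mathrm{F}_3}-1)\lesssim\mu$ in the sense of definition \ref{definition order of a Fourier multiplier}.

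For the non-time-differentiated dispersive term I would estimate $|h^3\mathrm{F}_3[\nabla\cdot\overline{V}]-\sqrt{\mathrm{F}_3}h^3\sqrt{\mathrm{F}_3}[\nabla\cdot\overline{V}]|_{H^{s+2}}$ by applying commutator estimates \ref{Commutator estimates} to the above identity with $W=\nabla\cdot\overline{V}$, together with $|h^3-1|_{H^{s+k}}\le\epsilon M(s+k)$, product estimates \ref{product estimate}, quotient estimates \ref{Quotient estimate}, and proposition \ref{Vbar moins nabla psi estimation} to trade $\overline{V}$ for $\nabla\psi$; this gives a bound $\lesssim\mu\epsilon M(\cdot)|\nabla\psi|_{H^{\cdot}}$, and the outer prefactor $\mu\epsilon$ (with the harmless factor $\frac1h\overline{V}\cdot\nabla(\cdot)$) makes this term $O(\mu^2\epsilon^2)=O(\mu^2\epsilon)$. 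For the time-differentiated term $\partial_t\big(\frac{\mu}{3h}\nabla([\sqrt{\mathrm{F}_3}-1,h^3-1]\sqrt{\mathrm{F}_3}[\nabla\cdot\overline{V}])\big)$ I would distribute $\partial_t$ exactly as in \underline{Step 3} of proposition \ref{Consistency second full dispersion GN system section 4}: when it falls on $\frac1h$ or on $h^3-1$ it produces $\partial_t h=\epsilon\partial_t\zeta$, which keeps the power $\epsilon$ in front of a commutator that is itself $O(\mu\epsilon)$; when it falls on $\sqrt{\mathrm{F}_3}[\nabla\cdot\overline{V}]$ it produces $\sqrt{\mathrm{F}_3}[\nabla\cdot\partial_t\overline{V}]$ against a commutator still carrying $O(\mu\epsilon)$. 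In each case $\partial_t\zeta$ and $\partial_t\nabla\psi$ are controlled by lemma \ref{Estimates on partial derivatives of zeta and psi}, and $\partial_t\overline{V}$ is controlled by writing $\partial_t\overline{V}=\partial_t(\overline{V}-\mathrm{F}_1\nabla\psi)+\mathrm{F}_1\partial_t\nabla\psi$ and using the first inequality of proposition \ref{Inequalities on partial t approximation of Vbar by nabla psi}; one then closes with commutator estimates \ref{Commutator estimates} and product/quotient estimates \ref{product estimate}, \ref{Quotient estimate}, obtaining an $O(\mu^2\epsilon)$ remainder, say with $n=7$.

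The main obstacle is, as in \underline{Step 3} of the proof of proposition \ref{Consistency second full dispersion GN system section 4}, purely a matter of bookkeeping: making sure the factor $\epsilon$ is never lost when commuting $\sqrt{\mathrm{F}_3}-1$ past $h^3-1$, when dividing by $h$, or when applying the time derivative. Once the identity above recasts the difference of symmetrizations as one commutator, this is routine and uses only results already proved in the paper. Alternatively one could bypass \eqref{Non symmetric system} and rerun the argument of proposition \ref{Consistency second full dispersion GN system section 4} directly with $\sqrt{\mathrm{F}_3}h^3\sqrt{\mathrm{F}_3}$ in place of $\frac12(h^3\mathrm{F}_3+\mathrm{F}_3 h^3)$, but routing through \eqref{Non symmetric system} isolates the only new computation.
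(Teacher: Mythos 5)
Your proposal is correct and takes essentially the same route as the paper: the paper's (avowedly formal) proof likewise reduces the statement to the consistency result already established in section 4 and absorbs the change of symmetrization into commutator terms controlled by the product, quotient and $\mathrm{F}_3$ estimates. The only difference is in the bookkeeping: the paper compares $\sqrt{\mathrm{F}_3}h^3\sqrt{\mathrm{F}_3}$ with the symmetric operator of \eqref{SecondFullDispersionModel(InVbar)} via the identity \eqref{Change of symmetrization}, whereas you compare it with the non-symmetric operator of \eqref{Non symmetric system} via the single commutator $-[\sqrt{\mathrm{F}_3}-1,h^3-1]\sqrt{\mathrm{F}_3}$ — a correct identity that makes the $O(\mu\epsilon)$ gain (hence $O(\mu^2\epsilon)$ after the outer $\mu$) slightly more transparent, and your sketch of the rigorous estimates is consistent with Step 3 of the proof of proposition \ref{Proposition consistency second Full dispersion Gn System}.
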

\begin{proof} I will only do a formal proof. The rigorous one would use the same tools as the proof of proposition \ref{Proposition consistency second Full dispersion Gn System} (see subsection \ref{Consistency with respect to the water waves equations}).

It is easy to see that
\begin{align}\label{Change of symmetrization}
    2\sqrt{\mathrm{F}_3}[h^3\sqrt{\mathrm{F}_3}[V]] = h^3\mathrm{F}_3[V] + \mathrm{F}_3[h^3 V] + O(\mu).
\end{align}
It only remains to use proposition \ref{Proposition consistency second Full dispersion Gn System} together with product estimates \ref{product estimate}, quotient estimates \ref{Quotient estimate} and the estimates on $\mathrm{F}_3$ of proposition \ref{FourierMultiplierApproximations} to get the result.
\end{proof}

The difference between \eqref{SecondFullDispersionModel(InVbar)} and \eqref{Full dispersion Green Nagdhi model article Vincent} in the mathematical point of view is of importance. Indeed the operator
\begin{align*}
    h(I_d - \frac{\mu}{3h}\nabla(\sqrt{\mathrm{F}_3}h^3\sqrt{\mathrm{F}_3}[\nabla\cdot\circ]))
\end{align*}
is invertible because one can decompose it in the following way:
\begin{align}\label{Symmetric decomposition}
    hI_d + \mu (\frac{h}{\sqrt{3}}\sqrt{\mathrm{F}_3}\nabla\cdot)^* h (\frac{h}{\sqrt{3}}\sqrt{\mathrm{F}_3}\nabla\cdot),
\end{align}
giving the positiveness of the operator and the coercivity of the bilinear form associated with. The Lax-Milgram theorem conclude \cite{DucheneIsrawiTalhouk16}.

However we don't have a similar decomposition as \eqref{Symmetric decomposition} for the operator $h(I_d - \frac{\mu}{6h}\nabla(h^3 \mathrm{F}_3[\nabla\cdot\circ] + \mathrm{F}_3[h^3\nabla\cdot\circ]))$. To ensure the invertibility it seems that we need an additionnal hypothesis on the smallness of $\epsilon\zeta$.

\label{Full dispersion Green-Naghdi system}

\subsection{Full dispersion Boussinesq systems}
In the literature several Whitham-Boussinesq systems (or full dispersion Boussinesq systems) are introduced, see \cite{Carter18, Dinvay19, Dinvay20, DinvayDutykhKalisch19, DinvayNilsson19, DinvaySelbergTesfahun19, HurPandey16, NilssonWang19, VargasMaganaPanayotaros16}. We pay a particular attention to the one studied in \cite{DinvaySelbergTesfahun19} for which they proved a local well-posedness result in dimension $d=2$ and a global well-posedness result for small data in dimension $d=1$. We recall it for $d=2$
\begin{align}\label{Full Dispersion Boussinesq system Dinvay}
    \begin{cases}
        \partial_t \zeta + \mathrm{F}_1\Delta\psi + \epsilon \mathrm{F}_1\nabla\cdot(\zeta \mathrm{F}_1\nabla\psi)=0,\\
        \partial_t\nabla\psi + \nabla\zeta + \frac{\epsilon}{2}\nabla(\mathrm{F}_1|\nabla\psi|)^2 = 0.
    \end{cases}
\end{align}
\begin{mypp}\label{Consistency Full Dispersion Dinvay system}
The water waves equations \eqref{WaterWavesEquations} are consistent at order $O(\mu\epsilon)$ in the shallow water regime $\mathcal{A}$ with the following system
\begin{align}
    \begin{cases}
        \partial_t \zeta + \mathrm{F}_1\Delta\psi + \epsilon \mathrm{F}_1\nabla\cdot(\zeta \mathrm{F}_1\nabla\psi)=0,\\
        \partial_t\psi + \zeta + \frac{\epsilon}{2}(\mathrm{F}_1|\nabla\psi|)^2 = 0.
    \end{cases}
\end{align}
\end{mypp}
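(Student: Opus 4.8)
The plan is to read both residual bounds off directly from proposition~\ref{Proposition Approximation Vbar estimates}, using two facts about a solution $(\zeta,\psi)$ of the water waves equations \eqref{WaterWavesEquations}: first, the exact identity $\partial_t\zeta=-\nabla\cdot(h\overline V)$ coming from \eqref{Dirichlet-to-NeumannOperator}--\eqref{VerticallyAveragedHorizontalVelocity}; second, that in the second equation of \eqref{WaterWavesEquations} the nonlinear term carries the prefactor $\epsilon/\mu$ in front of $\big(\mathcal{G}^{\mu}\psi+\epsilon\mu\nabla\zeta\cdot\nabla\psi\big)^2$, which becomes a prefactor of size $\mu\epsilon$ once one substitutes $\mathcal{G}^{\mu}\psi=-\mu\nabla\cdot(h\overline V)$. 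The only structural inputs on $\mathrm{F}_1$ that I would use are that it commutes with $\nabla$ and $\nabla\cdot$ (being a Fourier multiplier) and that $\mathrm{F}_1-\mathrm{Id}=O(\mu)$ with a loss of two derivatives, which is precisely \eqref{DefinitionF_2} together with the boundedness of $\mathrm{F}_2$ (proposition~\ref{FourierMultiplierApproximations}). I would prove the scalar form stated above; the two--component form \eqref{Full Dispersion Boussinesq system Dinvay} then follows by applying $\nabla$ to the second equation.

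\textbf{First equation.} Substituting $\partial_t\zeta=-\nabla\cdot\overline V-\epsilon\nabla\cdot(\zeta\overline V)$ into $\partial_t\zeta+\mathrm{F}_1\Delta\psi+\epsilon\mathrm{F}_1\nabla\cdot(\zeta\mathrm{F}_1\nabla\psi)$ and using $\nabla\cdot(\mathrm{F}_1\nabla\psi)=\mathrm{F}_1\Delta\psi$, I would rewrite this quantity as
\[
-\nabla\cdot\big(\overline V-\mathrm{F}_1\nabla\psi\big)\;-\;\epsilon\,\nabla\cdot\big(\zeta\,(\overline V-\mathrm{F}_1\nabla\psi)\big)\;+\;\epsilon\,(\mathrm{F}_1-\mathrm{Id})\,\nabla\cdot(\zeta\,\mathrm{F}_1\nabla\psi).
\]
The first term is estimated in $H^s$ by the first line of \eqref{AsymptoticDevelopmentVbar} (applied at regularity $s+1$) and is $O(\mu\epsilon)$; the second is $\epsilon$ times a similar quantity, hence $O(\mu\epsilon^2)$, after the product estimates \ref{product estimate}; the third is $\epsilon$ applied to $(\mathrm{F}_1-\mathrm{Id})$ of a quantity bounded in $H^{s+2}$ by $M(s+2)|\nabla\psi|_{H^{s+2}}$, hence $O(\mu\epsilon)$ by the two--derivative--loss expansion of $\mathrm{F}_1$. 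This gives the first residual, bounded by $N(s+n)$ with $n$ small.

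\textbf{Second equation.} Writing $\mathcal{G}^{\mu}\psi+\epsilon\mu\nabla\zeta\cdot\nabla\psi=\mu\big(-\nabla\cdot(h\overline V)+\epsilon\nabla\zeta\cdot\nabla\psi\big)$ turns the nonlinear term of the second line of \eqref{WaterWavesEquations} into
\[
\frac{\mu\epsilon}{2}\,\frac{\big(-\nabla\cdot(h\overline V)+\epsilon\nabla\zeta\cdot\nabla\psi\big)^2}{1+\epsilon^2\mu|\nabla\zeta|^2},
\]
which is $O(\mu\epsilon)$ in $H^s$: the denominator is $\geq1$ (quotient estimates \ref{Quotient estimate}), $\overline V$ is bounded in $H^{s+1}$ by proposition~\ref{Vbar moins nabla psi estimation}, and one squares and multiplies with the product estimates \ref{product estimate}. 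Hence, up to an $O(\mu\epsilon)$ remainder, the second water waves equation reads $\partial_t\psi+\zeta+\frac{\epsilon}{2}|\nabla\psi|^2=0$, and it remains to compare $|\nabla\psi|^2$ with $|\mathrm{F}_1\nabla\psi|^2$. Their difference is $\big((\mathrm{F}_1-\mathrm{Id})\nabla\psi\big)\cdot(\mathrm{F}_1\nabla\psi+\nabla\psi)$, which is $O(\mu)$ in $H^s$ again by the expansion of $\mathrm{F}_1$ and the product estimates \ref{product estimate}; multiplying by $\epsilon/2$ yields the second residual, of size $O(\mu\epsilon)$, bounded by $N(s+n)$ (here one also uses lemma~\ref{Estimates on partial derivatives of zeta and psi} to absorb $\partial_t\psi$).

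\textbf{On the value of $n$ and the difficulty.} Collecting the derivative losses above fixes the integer $n$ of definition~\ref{Definition consistency}; the worst loss comes from the two--derivative cost of $\mathrm{F}_1-\mathrm{Id}$ combined with one divergence, so $n$ can be taken small (a direct count gives $n\le5$). I do not anticipate a genuine obstacle: once proposition~\ref{Proposition Approximation Vbar estimates} is available this statement is essentially a corollary of its first line, the new ingredient being only that the Whitham correction $\mathrm{F}_1$ is an $O(\mu)$ perturbation of the identity. The two points to handle with care are the bookkeeping of Sobolev indices (applying \eqref{AsymptoticDevelopmentVbar} and the $\overline V$ bounds at a slightly raised regularity to compensate the loss from $\mathrm{F}_1-\mathrm{Id}$) and the clean treatment of the rational nonlinearity $(\,\cdot\,)^2/(1+\epsilon^2\mu|\nabla\zeta|^2)$ via the quotient estimates; both are routine given the machinery already set up in the previous sections.
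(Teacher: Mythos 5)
Your argument is correct, and it reaches the conclusion by a more direct (and more rigorous) route than the paper. The paper's own proof is explicitly only formal: it starts from the already-established $O(\mu^2\epsilon)$ consistency with the full dispersion Green--Naghdi system \eqref{FirstFullDispersionModel(InPsi)}, discards its $O(\mu\epsilon)$ terms, uses the identity $\Delta\psi+\tfrac{\mu}{3}\Delta\mathrm{F}_2\Delta\psi=\mathrm{F}_1\Delta\psi$ to resum the dispersive part into $\mathrm{F}_1\Delta\psi$, and finally reinserts the extra $\mathrm{F}_1$'s into the quadratic terms modulo $O(\mu\epsilon)$, deferring all estimates to ``the same tools as proposition \ref{Proposition consistency first full dispersion GN system}''. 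You instead bypass the Green--Naghdi intermediary for the first equation: the exact identity $\partial_t\zeta=-\nabla\cdot(h\overline V)$ plus the first line of \eqref{AsymptoticDevelopmentVbar} (which is precisely an $O(\mu\epsilon)$ statement, so no discarding step is needed) gives the residual in one algebraically exact decomposition, with the only genuine input on the Whitham correction being $\mathrm{F}_1-\mathrm{Id}=O(\mu)$ at the cost of two derivatives. For the second equation your treatment coincides with Step 2 of the proof of proposition \ref{Proposition consistency first full dispersion GN system}, which is what the paper implicitly reuses. What your route buys is a self-contained quantitative proof with explicit Sobolev bookkeeping (and a concrete $n$), directly from proposition \ref{Proposition Approximation Vbar estimates}; what the paper's route buys is brevity, since it recycles a proposition already proved at higher order. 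One minor point: the paper's notation $(\mathrm{F}_1|\nabla\psi|)^2$ is ambiguous, and you read it as $|\mathrm{F}_1\nabla\psi|^2$; this is harmless, since either reading differs from $|\nabla\psi|^2$ by $O(\mu)$ via proposition \ref{FourierMultiplierApproximations} and the product estimates, which is all your argument uses.
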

\begin{proof}
Again I will only do a formal proof. The rigorous one would use the same tools as the one of proposition \ref{Proposition consistency first full dispersion GN system}.

To do so let's use the fact that we proved the consistency of the water waves equations at order $O(\mu^2\epsilon)$ with system \eqref{FirstFullDispersionModel(InPsi)}, we discard all the terms of order $O(\mu\epsilon)$ of the latter. We obtain a formal consistency of the water waves system at order $O(\mu\epsilon)$ with the system
\begin{align*}
    \begin{cases} 
        \partial_t \zeta +\mathrm{F}_1\Delta\psi + \epsilon\nabla\cdot(\zeta\nabla\psi) = 0, \\
        \partial_t \psi  +\zeta + \frac{\epsilon}{2}|\nabla\psi|^2 = 0,
    \end{cases}
\end{align*}
(We used the identity $\Delta \psi + \frac{\mu}{3} \Delta \mathrm{F}_2 \Delta \psi = \mathrm{F}_1 \Delta \psi$).\\
It only remains to have in mind product estimates \ref{product estimate}, and the estimates on $\mathrm{F}_1$ of proposition \ref{FourierMultiplierApproximations} to see that taking $(\zeta,\psi)$ solutions of the water waves system \eqref{WaterWavesEquations}, one has 
\begin{align*}
    \begin{cases}
        \partial_t \zeta + \mathrm{F}_1\Delta\psi + \epsilon \mathrm{F}_1\nabla\cdot(\zeta \mathrm{F}_1\nabla\psi)=O(\mu\epsilon),\\
        \partial_t\psi + \zeta + \frac{\epsilon}{2}(\mathrm{F}_1|\nabla\psi|)^2 = O(\mu\epsilon).
    \end{cases}
\end{align*}
This conclude the formal demonstration.
\end{proof}

\begin{myrem}
    \begin{itemize}
        \item We could easily adapt definition \ref{Definition consistency} to match with system \eqref{Full Dispersion Boussinesq system Dinvay}. And say that the water waves equations \eqref{WaterWavesEquations} are consistent with this Whitham-Boussinesq system at order $O(\mu\epsilon)$.
        \item Using the same tools we could prove the consistency at order $O(\mu\epsilon)$ of the water waves equations \eqref{WaterWavesEquations} with the other Whitham-Boussinesq systems of the literature. 
        \item As the proof of proposition \ref{Consistency Full Dispersion Dinvay system} makes clear, the water waves equations are consistent at order $O(\mu\epsilon)$ with every systems 
         \begin{align*}
            \begin{cases}
                \partial_t\zeta + \mathrm{F_1}\Delta\psi + \epsilon\mathrm{G}_1\nabla\cdot(\zeta\mathrm{G}_2\nabla\psi) = 0,\\
                \partial_t\nabla\psi + \nabla\zeta + \frac{\epsilon}{2}\mathrm{G}_3\nabla(|\mathrm{G}_4\nabla\psi|^2) = 0.
            \end{cases}
        \end{align*}
         where the Fourier multipliers $\mathrm{G}_1$, $\mathrm{G}_2$, $\mathrm{G}_3$ and $\mathrm{G}_4$ are approximations of identity of the type $\mathrm{G}_i = 1 + O(\mu)$. However, the well-posedness properties of the system will depend on the characteristics of the Fourier multipliers and in particular the order of their symbol (definition \ref{definition order of a Fourier multiplier}). We postpone the study of the well-posedness of such systems to a future work.
    \end{itemize}
\end{myrem}

\label{Full dispersion Boussinesq systems}

\appendix
\section{Technical tools}
\begin{mypp}\label{product estimate}(Product estimates)
    \begin{enumerate}
        \item Let $t_0 > d/2$, $s\geq-t_0$ and $f \in H^s\cap H^{t_0}(\mathbb{R}^d), g\in H^s(\mathbb{R}^d)$. Then $fg \in H^s(\mathbb{R}^d)$ and 
        \begin{align*}
            |fg|_{H^s} \lesssim |f|_{H^{\max{(t_0,s)}}}|g|_{H^s}
        \end{align*}
        \item Let $s_1, s_2 \in \mathbb{R}$ be such that $s_1+s_2 \geq 0$. Then for all $s\leq s_j$ $(j = 1,2)$ and $s < s_1 + s_2 - d/2$, and all $f\in H^{s_1}(\mathbb{R}^d), g \in H^{s_2}(\mathbb{R}^d)$, one has $fg \in H^s(\mathbb{R}^d)$ and 
        \begin{align*}
            |fg|_{H^s} \lesssim |f|_{H^{s_1}}|g|_{H^{s_2}}
        \end{align*}
    \end{enumerate}
\end{mypp}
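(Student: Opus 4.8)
The plan is to prove both inequalities by the standard Littlewood--Paley / Bony paraproduct method; these are the classical Moser-- and Sobolev-multiplication estimates, and if one does not simply invoke them from \cite{WWP}, a self-contained argument runs as follows. Fix a Littlewood--Paley decomposition $\mathrm{Id}=\sum_j\Delta_j$, set $S_k=\sum_{j\le k}\Delta_j$, and use $|u|_{H^s}^2\sim\sum_q 2^{2qs}\|\Delta_q u\|_2^2$. Decompose the product into the three pieces of Bony's paraproduct,
\[
  fg=\sum_{j\le k-3}\Delta_j f\,\Delta_k g+\sum_{k\le j-3}\Delta_j f\,\Delta_k g+\sum_{|j-k|\le 2}\Delta_j f\,\Delta_k g=:\Pi_{\mathrm{lh}}+\Pi_{\mathrm{hl}}+\Pi_{\mathrm r}.
\]
The key structural fact is that a term $\Delta_j f\,\Delta_k g$ with $j\le k-3$ (resp.\ $k\le j-3$) has its Fourier transform supported in an annulus of size $\sim 2^{k}$ (resp.\ $\sim 2^{j}$), whereas a resonant term $\Delta_j f\,\Delta_k g$ with $|j-k|\le2$ is only supported in a \emph{ball} of radius $\sim 2^{\max(j,k)}$; this dichotomy is exactly what dictates the admissible range of $s$.

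First I would dispatch the two unbalanced paraproducts using their frequency localisation. For $\Pi_{\mathrm{lh}}$ one has $\|\Lambda^s\Pi_{\mathrm{lh}}\|_2^2\lesssim\sum_k 2^{2ks}\|S_{k-3}f\|_{L^\infty}^2\|\Delta_k g\|_2^2$, which in part 1 is $\lesssim\|f\|_{L^\infty}^2|g|_{H^s}^2\lesssim|f|_{H^{t_0}}^2|g|_{H^s}^2$ since $t_0>d/2$, valid for \emph{every} real $s$; in part 2 one instead bounds $\|S_{k-3}f\|_{L^\infty}\lesssim 2^{k(d/2-s_1)_+}|f|_{H^{s_1}}$, which converts the sum into $|f|_{H^{s_1}}|g|_{H^{s_2}}$ under the hypotheses $s\le s_2$ and $s<s_1+s_2-d/2$. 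The high--low term $\Pi_{\mathrm{hl}}$ is precisely where the exponent $\max(t_0,s)$ of part 1 is forced: if $s>d/2$ one uses $H^s\hookrightarrow L^\infty$ for $g$ and lands on $|f|_{H^s}|g|_{H^s}$; if $s\le d/2$ one instead estimates $\|S_{j-3}g\|_{L^\infty}\lesssim 2^{j(d/2-s)}|g|_{H^s}$ (with a harmless logarithmic fix at $s=d/2$), which costs the top frequency $2^{j(d/2-s)}$ and therefore yields $|f|_{H^{d/2}}|g|_{H^s}\le|f|_{H^{t_0}}|g|_{H^s}$; in all cases the contribution is $\lesssim|f|_{H^{\max(t_0,s)}}|g|_{H^s}$, and the symmetric computation for part 2 uses $s\le s_1$.

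The delicate term is the resonant part $\Pi_{\mathrm r}=\sum_j R_j$ with $R_j=\Delta_j f\,\widetilde\Delta_j g$ Fourier-supported in a ball of radius $\sim 2^j$, so that $\Lambda^s$ is not localised and one must resum a Littlewood--Paley series whose convergence at $q\to-\infty$ is the true constraint. Bernstein's inequality gives the natural bound $\|R_j\|_2\lesssim 2^{j(d/2-s_1-s_2)}a_jb_j$ with $(a_j),(b_j)\in\ell^2$ of norms $|f|_{H^{s_1}},|g|_{H^{s_2}}$, and summing ball-supported pieces into $H^s$ then produces $|\Pi_{\mathrm r}|_{H^s}\lesssim|f|_{H^{s_1}}|g|_{H^{s_2}}$ exactly under $s\le s_j$ and $s<s_1+s_2-d/2$, the latter endpoint being genuinely excluded. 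For part 1 the resonant term is immediate when $s\ge0$ (put $f\in L^\infty$, $g\in H^s$), and when $-t_0\le s<0$ I would bypass the low-frequency obstruction by duality: $|fg|_{H^s}=\sup_{|\varphi|_{H^{-s}}\le1}|\langle g,f\varphi\rangle|\le|g|_{H^s}\,\sup|f\varphi|_{H^{-s}}$ — legitimate because $s\ge-t_0$ makes the pairing well defined — and since $0\le-s\le t_0$, the already-established case $s'=-s\ge0$ of part 1 gives $|f\varphi|_{H^{-s}}\lesssim|f|_{H^{\max(t_0,-s)}}|\varphi|_{H^{-s}}=|f|_{H^{t_0}}|\varphi|_{H^{-s}}$, closing the argument. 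I expect the main obstacle to be exactly this resonant interaction and its interplay with the regularity thresholds: securing the \emph{tame} exponent $\max(t_0,s)$ in part 1 rather than the lossy $|f|_{H^{t_0}}|g|_{H^{t_0}}$, and obtaining the sharp strict inequality $s<s_1+s_2-d/2$ in part 2, while the endpoint configurations ($s=d/2$ in part 1, $s=s_j$ or $s=s_1+s_2-d/2$ in part 2) all require the usual $\ell^2$-summation refinements.
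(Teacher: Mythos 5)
The paper offers no proof of its own here—it simply cites Appendix B.1 of \cite{WWP}—and your Bony paraproduct argument is exactly the standard proof of these tame product estimates found there, with the hypotheses correctly traced to the terms that need them: $t_0>d/2$ for the two unbalanced paraproducts, $s\leq s_j$ and the strict inequality $s<s_1+s_2-d/2$ for the resonant part, and $s\geq -t_0$ for the duality step handling negative $s$ in part 1. The outline is correct; the only points deserving an extra line in a full write-up are the $s=0$ endpoint of part 1 (where the resonant convolution kernel $2^{ks}\mathbf{1}_{k\leq N_0}$ is not summable, but the whole estimate is immediate from H\"older and $H^{t_0}\hookrightarrow L^\infty$) and the place where $s_1+s_2\geq 0$ is actually used, namely to bound $2^{-k(s_1+s_2)}$ uniformly for $k\geq -N_0$ when resumming the ball-supported resonant blocks.
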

\begin{proof}
See Appendix B.1 in \cite{WWP}.
\end{proof}

\begin{mypp}\label{Quotient estimate}(Quotient estimates)
    Let $t_0>d/2, s\geq-t_0$ and $c_0>0$. Also let $f \in H^s(\mathbb{R}^d)$ and $g\in H^s\cap H^{t_0}(\mathbb{R}^d)$ be such that for all $X \in \mathbb{R}^d$, one has $1 + g(X) \geq c_0$. Then $\frac{f}{1+g}$ belongs to $H^s(\mathbb{R}^d)$ and 
    \begin{align*}
        |\frac{f}{1+g}|_{H^s} \leq C(\frac{1}{c_0},|g|_{H^{\max{(t_0,s)}}})|f|_{H^s}
    \end{align*}
\end{mypp}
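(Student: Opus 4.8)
The plan is to reduce the quotient estimate to the product estimate of Proposition~\ref{product estimate} together with a Moser-type composition estimate, treating separately the ``good'' Sobolev index $\sigma \colonequals \max(t_0,s) \geq t_0 > d/2$ and the (possibly negative) index $s$.

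First I would write
\[
\frac{f}{1+g} = f + f\left(\frac{1}{1+g}-1\right),
\]
so that by part (1) of Proposition~\ref{product estimate}, which applies precisely because $s \geq -t_0$ and the second factor will be shown to lie in $H^{\sigma}$, it suffices to establish
\[
\left|\frac{1}{1+g}-1\right|_{H^{\sigma}} \leq C\!\left(\tfrac{1}{c_0},\,|g|_{H^{\sigma}}\right).
\]
Observe that the hypotheses give $g \in H^{s}\cap H^{t_0}(\mathbb{R}^d) = H^{\sigma}(\mathbb{R}^d)$, and that the embedding $H^{t_0}(\mathbb{R}^d) \hookrightarrow L^{\infty}(\mathbb{R}^d)$ (valid since $t_0 > d/2$) together with $1+g \geq c_0 > 0$ shows that $\tfrac{1}{1+g}$ is a well-defined bounded function, with $g$ taking values in a fixed compact subset $K \subset (-1,\infty)$ depending only on $c_0$ and on $|g|_{L^\infty} \leq C|g|_{H^{\sigma}}$.

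Next I would pick a function $F \in C^{\infty}(\mathbb{R})$ with $F(0)=0$, bounded together with all its derivatives, that coincides with $x \mapsto \tfrac{1}{1+x}-1$ on a neighbourhood of $K$; its $C^k$-norms then depend only on $\tfrac{1}{c_0}$ and $|g|_{L^\infty}$. Since $g(\mathbb{R}^d) \subset K$ we have $\tfrac{1}{1+g}-1 = F(g)$, and because $\sigma > d/2$ the standard composition estimate for Nemytskii operators (see, e.g., Appendix~B of \cite{WWP}) gives $|F(g)|_{H^{\sigma}} \leq C(|g|_{L^\infty},F)\,|g|_{H^{\sigma}} \leq C(\tfrac{1}{c_0},|g|_{H^{\sigma}})$. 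Substituting into the decomposition above and using the product estimate yields
\[
\left|\frac{f}{1+g}\right|_{H^{s}} \leq |f|_{H^{s}} + C\!\left(\left|\tfrac{1}{1+g}-1\right|_{H^{\sigma}}\right)|f|_{H^{s}} \leq C\!\left(\tfrac{1}{c_0},\,|g|_{H^{\max(t_0,s)}}\right)|f|_{H^{s}},
\]
as claimed.

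The point to watch is that the composition (Nemytskii) estimate must only be invoked at the index $\sigma = \max(t_0,s) > d/2$, where it holds; the low, possibly negative index $s$ is absorbed entirely by the product estimate, whose hypothesis $s \geq -t_0$ is exactly what is available. The remaining ingredient, namely the construction of the global smooth truncation $F$ with $C^k$-bounds controlled by $\tfrac{1}{c_0}$ and $|g|_{L^\infty}$, is routine.
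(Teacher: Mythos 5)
Your argument is correct. The paper itself supplies no proof here—it simply refers to Appendix B.1 of \cite{WWP}—and your decomposition $\frac{f}{1+g}=f+f\bigl(\frac{1}{1+g}-1\bigr)$, followed by the product estimate at the low index $s\geq -t_0$ and a Moser-type composition estimate at the high index $\max(t_0,s)>d/2$, is essentially the standard argument given in that reference.
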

\begin{proof}
    See Appendix B.1 in \cite{WWP}.
\end{proof}

\begin{mylem}\label{lemma 3.43}
    Let $P(\Sigma_t)$ be as in definition \ref{Definitions Fondamentales}. Let $h \in L_z^2H^s_X((-1,0)\times\mathbb{R}^d)$ and $u \in L_z^2H^{s+1}_X \cap H^1_z H^s_X((-1,0)\times\mathbb{R}^d)$ $(s\geq 0)$ solve the boundary value problem
    \begin{align*}
        \begin{cases}
            \nabla^{\mu}\cdot P(\Sigma_t) \nabla^{\mu} u = h,\\
            u|_{z=0} = 0, \ \ \partial_z u|_{z=-1} = 0.
        \end{cases}
    \end{align*}
    Then one has 
    \begin{align*}
        ||\Lambda^s\nabla^{\mu} u||_2 \leq M(s+1)||\Lambda^s h||_2.
    \end{align*}
\end{mylem}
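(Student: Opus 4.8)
The plan is to combine a coercivity estimate for $P(\Sigma_t)$ with a tangential--derivative energy argument and a bootstrap in $s$. First I would record the uniform coercivity of $P(\Sigma_t)$: there is a constant $c=C(M_0,|\zeta|_{H^{t_0+1}})\le M(1)$ with $P(\Sigma_t)\Theta\cdot\Theta\ge c^{-1}|\Theta|^2$ for every $\Theta\in\mathbb{R}^{d+1}$ and $z\in[-1,0]$. This is a pointwise algebraic fact: writing $\Theta=(\xi,\tau)$ with $\xi\in\mathbb{R}^d$ and $\tau\in\mathbb{R}$, one has the identity
\begin{equation*}
 P(\Sigma_t)\Theta\cdot\Theta=(1+\epsilon\zeta)\,\big|\,\xi-\tfrac{\sqrt{\mu}\,\epsilon(z+1)}{1+\epsilon\zeta}\nabla\zeta\,\tau\,\big|^{2}+\tfrac{\tau^{2}}{1+\epsilon\zeta};
\end{equation*}
the corresponding symmetric matrix has determinant $(1+\epsilon\zeta)^{d-1}$ and entries bounded in terms of $1/h_{\min}$, $\mu_{\max}$ and $|\nabla\zeta|_{L^\infty}\lesssim|\zeta|_{H^{t_0+1}}$, so its smallest eigenvalue is bounded below by a constant of the announced form.

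For the base case $s=0$ I would test the weak formulation of the boundary value problem against $u$ itself. The boundary contribution at $z=0$ vanishes because $u|_{z=0}=0$, and the one at $z=-1$ vanishes because the conormal derivative $e_z\cdot P(\Sigma_t)\nabla^{\mu}u$ reduces there to $(1+\epsilon\zeta)^{-1}\partial_z u|_{z=-1}=0$; hence $\int_{\mathcal{S}}P(\Sigma_t)\nabla^{\mu}u\cdot\nabla^{\mu}u=-\int_{\mathcal{S}}h\,u$. Coercivity, Cauchy--Schwarz and the Poincaré inequality $\|u\|_2\le\|\partial_z u\|_2\le\|\nabla^{\mu}u\|_2$ (valid since $u|_{z=0}=0$) then give $\|\nabla^{\mu}u\|_2\le c\,\|h\|_2\le M(1)\,\|h\|_2$.

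For $s>0$ I would apply $\Lambda^{s}$ to the equation; since $\Lambda^s$ acts only in $X$ it commutes with $\nabla^{\mu}$ and with the traces, so $v:=\Lambda^s u$ solves, weakly, $\nabla^{\mu}\cdot\big(P(\Sigma_t)\nabla^{\mu}v+[\Lambda^s,P(\Sigma_t)]\nabla^{\mu}u\big)=\Lambda^s h$ with $v|_{z=0}=0$ and homogeneous natural condition at $z=-1$. Testing against $v$ as above yields $\|\Lambda^s\nabla^{\mu}u\|_2\le c\big(\|\Lambda^s h\|_2+\|[\Lambda^s,P(\Sigma_t)]\nabla^{\mu}u\|_2\big)$. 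Since the entries of $P(\Sigma_t)$ are smooth functions of $\epsilon\zeta$ and $\sqrt{\mu}\,\epsilon(z+1)\nabla\zeta$, commutator estimates (see Proposition \ref{Commutator estimates}) bound $\|[\Lambda^s,P(\Sigma_t)]\nabla^{\mu}u\|_2\le M(s+1)\,\|\Lambda^{\max(s-1,0)}\nabla^{\mu}u\|_2$; inserting the induction hypothesis $\|\Lambda^{\max(s-1,0)}\nabla^{\mu}u\|_2\le M(s)\|\Lambda^{\max(s-1,0)}h\|_2\le M(s)\|\Lambda^s h\|_2$ (running the induction over the integer part of $s$, from the base case) closes the estimate and produces $\|\Lambda^s\nabla^{\mu}u\|_2\le M(s+1)\|\Lambda^s h\|_2$.

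The hard part will be the coercivity step, in particular keeping track of which Sobolev norm of $\zeta$ the constant $c$ --- hence the final $M(s+1)$ --- truly depends on; the displayed identity makes this transparent and shows that $M(1)$ already suffices there. Everything else is routine: the commutation of $\Lambda^s$ through the weak formulation, the preservation of the homogeneous natural boundary condition at $z=-1$, and the monotonicity $\|\Lambda^{\max(s-1,0)}h\|_2\le\|\Lambda^s h\|_2$ are all standard at the stated regularity $u\in L^2_zH^{s+1}_X\cap H^1_zH^s_X$.
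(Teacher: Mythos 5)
Your proposal is correct, and it reconstructs essentially the argument behind the result the paper simply cites (Lemma 3.43 of \cite{WWP}): pointwise coercivity of $P(\Sigma_t)$ (your completed-square identity and the determinant computation $\det P(\Sigma_t)=(1+\epsilon\zeta)^{d-1}$ both check out), the energy identity with vanishing boundary terms at $z=0$ and $z=-1$, Poincar\'e in $z$, and then conjugation by $\Lambda^s$ with the commutator $[\Lambda^s,P(\Sigma_t)]$ absorbed by induction on the integer part of $s$. The only point left implicit is the usual regularization needed to justify testing against $\Lambda^{2s}u$ at the stated regularity, which is standard.
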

\begin{proof}
    See lemma 3.43 in \cite{WWP}.
\end{proof}

\begin{mypp}\label{FourierMultiplierApproximations}
Let $s\geq 0$, $z\in(-1,0)$ and $\psi$ such that $\nabla\psi \in H^{s+1}(\mathbb{R}^d)$, then we have the following estimations
\begin{align*}
     \begin{cases}
        |(\frac{1-\mathrm{F}_0}{\mu|D|^2} +\frac{z^2}{2} +z)\psi|_{H^s}\lesssim \mu|\nabla\psi|_{H^{s+1}} \ \ , \ \ |(1-(z+1)^2 \mathrm{F}_0 +z^2+2z)\psi|_{H^s} \lesssim \mu|\nabla\psi|_{H^{s+1}} \\
        |(\frac{\tanh(\sqrt{\mu}|D|)}{\sqrt{\mu}|D|} -1)\psi|_{H^s} \lesssim \mu|\nabla\psi|_{H^{s+1}} \ \ , \ \ |(\frac{z+1}{\sqrt{\mu}|D|} \frac{\sinh((z+1)\sqrt{\mu}|D|)}{\cosh(\sqrt{\mu}|D|)} - (z+1)^2)\psi|_{H^s} \lesssim \mu|\nabla\psi|_{H^{s+1}} \\
        |(\mathrm{F}_2 -1)\psi|_{H^s} \lesssim \mu |\nabla\psi|_{H^{s+1}} \ \ , \ \ |(\mathrm{F}_3-1)\psi|_{H^s} \lesssim \mu |\nabla\psi|_{H^{s+1}}
    \end{cases}
\end{align*}
An estimation of order $O(\mu^2)$ for $\frac{\tanh(\sqrt{\mu}|D|)}{\sqrt{\mu}|D|}$ will also be useful. If $\nabla\psi \in H^{s+3}$ then
\begin{align*}
    |(\frac{\tanh(\sqrt{\mu}|D|)}{\sqrt{\mu}|D|} -1 + \frac{1}{3}\mu|D|^2)\psi|_{H^s} \lesssim \mu^2|\nabla\psi|_{H^{s+3}}
\end{align*}
\end{mypp}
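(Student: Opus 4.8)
The plan is to reduce each of the estimates to a pointwise bound on the corresponding Fourier symbol and then conclude by Plancherel's theorem. All the multipliers occurring in the statement have radial symbols depending on $\xi$ only through $y\colonequals\sqrt{\mu}|\xi|$ (and, for the $\mathrm{F}_0$-type ones, on the parameter $a\colonequals z+1\in[0,1]$), and each such symbol is an even entire function of $y$. The reduction itself is elementary: if a Fourier multiplier $\mathrm{G}(D)$ has symbol $G$ with $|G(\xi)|\leq C\mu^{k}|\xi|^{2k}$ for all $\xi$, uniformly in $\mu\in(0,\mu_{\max}]$, then for $\psi$ with $\nabla\psi\in H^{s+2k-1}$ one has, by Plancherel,
\begin{align*}
|\mathrm{G}(D)\psi|_{H^s}^2 &= \int_{\mathbb{R}^d}|G(\xi)|^2(1+|\xi|^2)^{s}|\widehat{\psi}(\xi)|^2\,d\xi \\
&\leq C^2\mu^{2k}\int_{\mathbb{R}^d}|\xi|^{4k}(1+|\xi|^2)^{s}|\widehat{\psi}(\xi)|^2\,d\xi \leq C^2\mu^{2k}|\nabla\psi|_{H^{s+2k-1}}^2,
\end{align*}
the last step using $|\xi|^{4k}=|\xi|^2\,|\xi|^{4k-2}\leq|\xi|^2(1+|\xi|^2)^{2k-1}$. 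With $k=1$ this yields all the $O(\mu)$ estimates and with $k=2$ the $O(\mu^2)$ one, so it only remains to produce the pointwise symbol bounds.

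For the six $O(\mu)$ estimates I would use the following fact: an even, smooth function $g$ of $y$ that vanishes at $y=0$ and is bounded on $[0,\infty)$ (uniformly in $a\in[0,1]$) satisfies $|g(y)|\leq Cy^2$ for all $y\geq0$ — near $0$ by Taylor expansion (evenness forces $\partial_y g(0)=0$), and for $y\geq1$ because $g$ is bounded and $y^2\geq1$. Once the explicit lower-order polynomials written in the statement are subtracted, each symbol is exactly of this type. For example the symbol of $\frac{1-\mathrm{F}_0}{\mu|D|^2}+\frac{z^2}{2}+z$ equals $\frac{1}{y^2}\big(1-\frac{\cosh(ay)}{\cosh y}\big)+\frac{a^2-1}{2}$; it is bounded because $0\leq\frac{\cosh(ay)}{\cosh y}\leq1$, and it vanishes to order $y^2$ at the origin since $1-\frac{\cosh(ay)}{\cosh y}=\frac{(1-a^2)y^2}{2}+O(y^4)$ cancels the added constant. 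The symbols of $1-(z+1)^2\mathrm{F}_0+z^2+2z$ (equal to $a^2\big(1-\frac{\cosh(ay)}{\cosh y}\big)$), of $\frac{\tanh y}{y}-1$, of $\frac{z+1}{\sqrt{\mu}|D|}\frac{\sinh((z+1)\sqrt{\mu}|D|)}{\cosh(\sqrt{\mu}|D|)}-(z+1)^2$ (equal to $a^2\frac{\tanh(ay)}{ay}\frac{\cosh(ay)}{\cosh y}-a^2$, with the product lying in $[0,a^2]$), and of $\mathrm{F}_2-1=\frac{3}{y^2}\big(1-\frac{\tanh y}{y}\big)-1$, are handled identically, using $0\leq\frac{\tanh y}{y}\leq1$. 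For $\mathrm{F}_3-1=\frac{\mathrm{F}_2-\mathrm{F}_1}{\mathrm{F}_1}$ one writes $\mathrm{F}_2-\mathrm{F}_1=(\mathrm{F}_2-1)-(\mathrm{F}_1-1)=O(y^2)$ near the origin, and uses the lower bound $\mathrm{F}_1(\xi)=\frac{\tanh y}{y}\geq(1+y)^{-1}$ (already employed in the paper) together with crude bounds on $\mathrm{F}_1,\mathrm{F}_2$ for $y\geq1$ to get boundedness of $\mathrm{F}_3-1$; hence $|\mathrm{F}_3(\xi)-1|\leq C\mu|\xi|^2$.

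The last estimate needs the sharper bound $\big|\frac{\tanh y}{y}-1+\frac{y^2}{3}\big|\leq Cy^4$. Near $y=0$ this holds because the function is even and, by construction, its power series starts at order $y^4$ (from $\frac{\tanh y}{y}=1-\frac{y^2}{3}+\frac{2y^4}{15}-\cdots$); for $y\geq1$ one bounds crudely $\big|\frac{\tanh y}{y}-1\big|\leq1\leq y^4$ and $\frac{y^2}{3}\leq\frac{y^4}{3}$. Thus $|G(\xi)|\leq C\mu^2|\xi|^4$ and the reduction lemma with $k=2$ concludes.

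No step is a genuine obstacle — the argument is entirely elementary. The only points that deserve attention are the uniformity of all constants in both $\mu\leq\mu_{\max}$ and the transversal variable $z$ (that is, $a\in[0,1]$), secured by the parametrized boundedness above, and the large-frequency regime $y\geq1$, where the Taylor estimates are useless and one must instead exploit the simple uniform bounds $0\leq\frac{\cosh(ay)}{\cosh y}\leq1$, $0\leq\frac{\tanh y}{y}\leq1$, $0\leq a^2\frac{\tanh(ay)}{ay}\frac{\cosh(ay)}{\cosh y}\leq a^2$. Alternatively, one may simply quote the corresponding symbol estimates from \cite{WWP}.
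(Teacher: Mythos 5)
Your proposal is correct and follows essentially the same route as the paper: reduce each estimate to a pointwise bound $|G(\sqrt{\mu}|\xi|)|\lesssim \mu^k|\xi|^{2k}$ on the radial symbol and conclude by Plancherel, the symbol bound coming from a Taylor expansion at the origin (where evenness kills the first-order term) combined with a crude global bound. The only, inessential, difference is that the paper obtains the quadratic bound from the Taylor--Lagrange formula assuming $G''$ bounded on all of $[0,\infty)$, whereas you split into $y\leq 1$ and $y\geq 1$ and only use boundedness of $G$ itself at high frequency; you also carry out explicitly the symbol verifications (including the $\mathrm{F}_3$ case via the lower bound on $\mathrm{F}_1$) that the paper leaves to the reader.
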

\begin{proof}
All the proves are similar and the main key is the Taylor-Lagrange formula. All these estimations except the last one are on the same form where $G$ is a smooth function on $(0,+\infty)$, continuous in $0$.
\begin{align*}
    |(G(\sqrt{\mu}|D|)-G(0))\psi|_{H^s} \leq \mu |\nabla\psi|_{H^{s+1}} \iff |(G(\sqrt{\mu}|\xi|)-G(0))\hat{\psi}|_{H^s} \leq \mu |\nabla\psi|_{H^{s+1}}
\end{align*}
For the last one it would be 
\begin{align*}
    &|(G(\sqrt{\mu}|D|)-G(0)-\mu|D|^2 G''(0))\psi|_{H^s} \leq \mu^2 |\nabla\psi|_{H^{s+3}} \\
    \iff &|(G(\sqrt{\mu}|\xi|)-G(0)-\mu|\xi|^2 G''(0))\hat{\psi}|_2 \leq \mu^2 |\nabla\psi|_{H^{s+3}}
\end{align*}
If we succeed in proving that the second derivative of $G$ is bounded in $[0,+\infty)$ , and that $G'(0)=0$ then we can use the Taylor-Lagrange formula stating that for all $x\in [0,+\infty)$ there exists $\theta \in [0,1]$ such that
\begin{align*}
    G(x)-G(0) = \frac{x^2}{2}G''(\theta x)
\end{align*}
then the boundedness of $G''$ allow us to write
\begin{align*}
    |G(x)-G(0)| \leq |G''|_{\infty}x^2
\end{align*}
Replacing $x$ by $\sqrt{\mu}|\xi|$ in the last inequality we obtain
\begin{align*}
    |(G(\sqrt{\mu}|\xi|)-G(0))\hat{\psi}|_2 \leq \mu||\xi|^2\hat{\psi}|_2 \leq \mu|\nabla\psi|_{H^1}
\end{align*}
Thus it is sufficient to prove the boundedness in $C^2([0,+\infty))$ of $G$ and the fact that $G'(0)=0$ for the following functions:
\begin{align*}
    \begin{cases}
        G_1(x) = \frac{1}{x^2}(1-\frac{\cosh{((z+1)x)}}{\cosh{(x)}}), \ \ G_2(x) =  1-(z+1)^2\frac{\cosh{((z+1)x)}}{\cosh{(x)}} \\
        G_3(x) = \frac{\tanh{(x)}}{x}, \ \ G_4(x) = \frac{z+1}{x}\frac{\sinh{((z+1)x)}}{\cosh{(x)}} \\
        G_5(x) = \frac{3}{x\tanh{(x)}} - \frac{3}{x^2}, \ \ G_6(x) = \frac{3}{\mu}(1-\frac{\tanh{(x)}}{x})
    \end{cases}
\end{align*}
The end of the proof is let to the reader.
\end{proof}

\begin{mydef}\label{definition order of a Fourier multiplier}
We say that a Fourier multiplier $\mathrm{F}(D)$ is of order $s$ $(s\in\mathbb{R})$ and write $\mathrm{F} \in \mathcal{S}^s$ if $\xi \in \mathbb{R}^d \mapsto F(\xi) \in \mathbb{C}$ is smooth and satisfies
\begin{align*}
    \forall \xi \in \mathbb{R}^d, \forall\beta\in\mathbb{N}^d, \ \ \sup_{\xi\in\mathbb{R}^d} \langle\xi\rangle^{|\beta|-s}|\partial^{\beta}F(\xi)| < \infty.
\end{align*}
We also introduce the seminorm
\begin{align*}
    \mathcal{N}^s(F) = \sup_{\beta\in\mathbb{N}^d,|\beta\leq2+d+\lceil\frac{d}{2}\rceil} \sup_{\xi\in\mathbb{R}^d} \langle\xi\rangle^{|\beta|-s}|\partial^{\beta}F(\xi)|.
\end{align*}
\end{mydef}

\begin{mypp}\label{Commutator estimates}
    Let $t_0 > d/2$, $s \geq 0$ and $F \in \mathcal{S}^s$. If $f\in H^s\cap H^{t_0+1}$ then, for all $g \in H^{s-1}$,
    \begin{align*}
        |[F(D),f]g|_2 \leq \mathcal{N}^s(F)|f|_{H^{\max{(t_0+1,s)}}}|g|_{H^{s-1}}. 
    \end{align*}
\end{mypp}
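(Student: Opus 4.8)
The plan is to work on the Fourier side. Writing $\zeta$ for the frequency of $f$ and $\xi-\zeta$ for that of $g$, one has, up to a harmless constant,
\[
\widehat{[F(D),f]g}(\xi)=\int_{\mathbb{R}^d}\bigl(F(\xi)-F(\xi-\zeta)\bigr)\,\widehat f(\zeta)\,\widehat g(\xi-\zeta)\,d\zeta .
\]
I would split the $\zeta$-integral by a smooth partition of unity (equivalently, run a Bony/Littlewood--Paley decomposition of the product $fg$) into three regimes: (a) $\langle\zeta\rangle\lesssim\langle\xi-\zeta\rangle$, where $f$ carries the lower frequency; (b) $\langle\zeta\rangle\gtrsim\langle\xi-\zeta\rangle$ together with $\langle\xi\rangle\sim\langle\zeta\rangle$, where $f$ carries the higher frequency and the output frequency is comparable to it; and (c) $\langle\zeta\rangle\gtrsim\langle\xi-\zeta\rangle$ with $\langle\xi\rangle\ll\langle\zeta\rangle$, the high--high interaction with near cancellation in the output. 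The target in each regime is a bound $\mathcal N^s(F)\,|f|_{H^{\max(t_0+1,s)}}\,|g|_{H^{s-1}}$, and at the end the Littlewood--Paley pieces are recombined using almost-orthogonality.

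In regime (a) the mean value theorem and $F\in\mathcal S^s$ give $|F(\xi)-F(\xi-\zeta)|\lesssim\mathcal N^s(F)\,|\zeta|\,\langle\xi\rangle^{s-1}$, since every point of the segment $[\xi-\zeta,\xi]$ has modulus comparable to $\langle\xi\rangle\sim\langle\xi-\zeta\rangle$. Writing $|\zeta|\le\langle\zeta\rangle$ and $\langle\xi\rangle^{s-1}\sim\langle\xi-\zeta\rangle^{s-1}$, the contribution is controlled by $\mathcal N^s(F)$ times the convolution $\bigl|\widehat{\Lambda f}\,\bigr|\ast\bigl|\widehat{\Lambda^{s-1}g}\,\bigr|$; Young's inequality $L^1\ast L^2\to L^2$, together with the fact that $t_0>d/2$ forces $\widehat{\Lambda f}\in L^1$ with $\|\widehat{\Lambda f}\|_{L^1}\lesssim|f|_{H^{t_0+1}}$, closes this regime — this is where the gain of one derivative in the commutator comes from. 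Regime (c) is handled in the same way: there $\langle\xi-\zeta\rangle\sim\langle\zeta\rangle$, so the crude bound $|F(\xi)-F(\xi-\zeta)|\lesssim\mathcal N^s(F)\langle\zeta\rangle^{s}$ (which only uses $s\ge0$) factors as $\langle\zeta\rangle\cdot\langle\zeta\rangle^{s-1}\sim\langle\zeta\rangle\cdot\langle\xi-\zeta\rangle^{s-1}$, and once more $\bigl|\widehat{\Lambda f}\,\bigr|\ast\bigl|\widehat{\Lambda^{s-1}g}\,\bigr|$ with Young's inequality does the job.

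The delicate point, which I expect to be the main obstacle, is regime (b): there $g$ carries the lowest frequency while we are only allowed $g\in H^{s-1}$, so naively pulling $|g|_{L^\infty}$ out would demand $g\in H^{t_0}$, which is strictly stronger than $H^{s-1}$ when $s<t_0+1$. Here I would not pull $\langle\xi\rangle^{s}$ out of the integral but instead assign $\langle\zeta\rangle^{s}\sim\langle\xi\rangle^{s}$ onto $\widehat f(\zeta)$ (using $f\in H^{\max(t_0+1,s)}\subset H^{s}$), keep the dyadic localisation $\langle\xi-\zeta\rangle\lesssim\langle\zeta\rangle$ of $g$, and estimate its low-frequency factor by Bernstein: when $s<1+d/2$ one has $\|S_k g\|_{L^\infty}\lesssim 2^{k(d/2-s+1)}|g|_{H^{s-1}}$, and summing the almost-orthogonal dyadic pieces produces the factor $|f|_{H^{1+d/2}}\le|f|_{H^{t_0+1}}$ because $d/2<t_0$; when $s\ge1+d/2$ one simply uses $|g|_{L^\infty}\lesssim|g|_{H^{s-1}}$ and gets $|f|_{H^{s}}$ instead. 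Adding the three regimes gives the stated inequality. (This estimate is classical; a complete proof also appears among the commutator estimates in~\cite{WWP}.)
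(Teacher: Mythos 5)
Your argument is correct in substance, but note that the paper does not prove this proposition at all: it simply defers to Appendix B.2 of \cite{WWP}, where the estimate is established via paradifferential calculus (decomposing $fg=T_fg+T_gf+R(f,g)$ and using that $[F(D),T_f]$ is an operator of order $s-1$ together with remainder estimates). What you propose is essentially that same proof unpacked into explicit Fourier variables, which has the merit of being self-contained and of making visible exactly where each hypothesis is used: the mean value theorem on $F(\xi)-F(\xi-\zeta)$ in the low-high regime is the source of the one-derivative gain and consumes $|f|_{H^{t_0+1}}$ through $\|\widehat{\Lambda f}\|_{L^1}$; the high-low regime is the only place the $\max(t_0+1,s)$ appears and is correctly handled by transferring $\langle\zeta\rangle^s$ onto $\widehat f$ and using Bernstein plus almost-orthogonality of the frequency-localized outputs. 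Two small bookkeeping points you should tighten if you write this out in full. First, your regimes (a) and (b)--(c) overlap on the set where $\langle\zeta\rangle\sim\langle\xi-\zeta\rangle$, and on that set the mean value argument of (a) can fail for $s<1$ because the segment $[\xi-\zeta,\xi]$ may pass near the origin; the clean split is $\langle\zeta\rangle\ll\langle\xi-\zeta\rangle$, $\langle\xi-\zeta\rangle\ll\langle\zeta\rangle$, and $\langle\zeta\rangle\sim\langle\xi-\zeta\rangle$, with the comparable-frequency block always treated by the crude bound $|F(\xi)|+|F(\xi-\zeta)|\lesssim\mathcal{N}^s(F)\langle\zeta\rangle^s\sim\mathcal{N}^s(F)\langle\zeta\rangle\langle\xi-\zeta\rangle^{s-1}$. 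Second, at the endpoint $s=1+d/2$ the Bernstein sum is logarithmically divergent; this is harmless because the target norm is $|f|_{H^{\max(t_0+1,s)}}$ with $t_0+1>1+d/2$ strictly, which restores geometric decay, but it is worth saying. Neither point is a genuine gap.
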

\begin{proof}
    See Appendix B.2 in \cite{WWP} for a proof of this proposition.
\end{proof}

\begin{mypp}\label{Theorem 3.15}
    Let $s\geq 2$. Let $\zeta \in H^{s+2}(\mathbb{R}^d)$ be such that \eqref{FundamentalHypothesis} is satisfied and $\psi \in \dot{H}^{s+1}(\mathbb{R}^d)$.
    Then one has 
    \begin{align*}
        \begin{cases}
            |\frac{1}{\mu}\mathcal{G}^{\mu}\psi|_{H^{s}} \leq M(s+2)|\nabla\psi|_{H^{s+1}},\\
            |\frac{1}{\sqrt{\mu}}\mathcal{G}^{\mu}\psi|_{H^{s}} \leq \mu^{1/4}M(s+1)|\nabla\psi|_{H^s}.
        \end{cases}
    \end{align*}
\end{mypp}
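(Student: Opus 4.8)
The plan is to reduce both inequalities to a single elliptic estimate on the straightened potential $\phi$ of definition \ref{Definitions Fondamentales} and then to a good choice of extension of the trace $\psi$. By \eqref{Dirichlet-to-NeumannOperator} and \eqref{VerticallyAveragedHorizontalVelocity} one has $\frac1\mu\mathcal G^\mu\psi=-\nabla\cdot(h\overline V)$ with $\overline V=\frac1h\int_{-1}^0[h\nabla\phi-\epsilon(z+1)\nabla\zeta\,\partial_z\phi]\,\dz$, so by the product estimates \ref{product estimate} and the quotient estimates \ref{Quotient estimate} it is enough to bound $\overline V$ in $H^{s+1}$ for the first estimate and in $H^s$ for the second, with the appropriate power of $\mu$; and by Cauchy--Schwarz in the vertical variable any such bound follows from one on $\|\Lambda^s\nabla^\mu\phi\|_2$, where $\nabla^\mu\phi=(\sqrt\mu\nabla\phi,\partial_z\phi)$, so that $\sqrt\mu\|\Lambda^s\nabla\phi\|_2\le\|\Lambda^s\nabla^\mu\phi\|_2$ and the $\partial_z\phi$ contribution to $\overline V$ moreover carries the small factor $\epsilon\nabla\zeta$.

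First I would establish the elliptic estimate. Under the non-cavitation hypothesis \eqref{FundamentalHypothesis}, $P(\Sigma_t)$ is uniformly coercive with constants controlled by $M_0$, so the energy method applied to $\phi-\psi^\flat$ --- $\psi^\flat$ any extension of $\psi$ to $\mathcal S$ with $\partial_z\psi^\flat|_{z=-1}=0$ --- gives $\|\nabla^\mu\phi\|_2\lesssim M_0\|\nabla^\mu\psi^\flat\|_2$. To reach $\Lambda^s$-regularity one commutes $\Lambda^s$ through $\nabla^\mu\cdot P(\Sigma_t)\nabla^\mu\phi=0$ and controls the commutators $[\Lambda^s,P(\Sigma_t)]$ with the product and commutator estimates \ref{product estimate}, \ref{Commutator estimates}, absorbing the top-order term by coercivity; this is exactly the mechanism of lemma \ref{lemma 3.43} once the equation is written with a source term, and it costs two derivatives on $\zeta$ (hence the shift $M_0\to M(s+2)$). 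This yields $\|\Lambda^s\nabla^\mu\phi\|_2\le M(s+2)\,\|\Lambda^s\nabla^\mu\psi^\flat\|_2$.

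The $\mu$-dependence then comes entirely from the choice of $\psi^\flat$, and the optimal one is the explicit extension $\psi^\flat=\mathrm{F}_0\psi=\frac{\cosh((z+1)\sqrt\mu|D|)}{\cosh(\sqrt\mu|D|)}\psi$ solving the flat-bottom problem $\partial_z^2\psi^\flat+\mu\Delta\psi^\flat=0$ (cf.\ \eqref{phi0}). Integrating the $\cosh$/$\sinh$ profiles over $z\in(-1,0)$ gives the explicit identity
\[
\|\Lambda^s\nabla^\mu\mathrm{F}_0\psi\|_2^2=\int_{\mathbb R^d}\langle\xi\rangle^{2s}\sqrt\mu\,|\xi|\tanh(\sqrt\mu|\xi|)\,|\widehat{\psi}(\xi)|^2\,d\xi\;\sim\;\mu\,|\mathfrak{P}\psi|_{H^s}^2,
\]
with $\mathfrak{P}=\frac{|D|}{(1+\sqrt\mu|D|)^{1/2}}$ the multiplier appearing in the proof of lemma \ref{Estimates on partial derivatives of zeta and psi}. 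Since the symbol of $\mathfrak{P}$ is bounded by $|\xi|$ we get $|\mathfrak{P}\psi|_{H^s}\le|\nabla\psi|_{H^s}$, hence $\|\Lambda^s\nabla^\mu\phi\|_2\le\sqrt\mu\,M(s+2)|\nabla\psi|_{H^s}$; combining with the reduction above, the factor $h$ and the extra derivative produced by $\nabla\cdot$ turn this into the first estimate, with $M(s+2)$ and $|\nabla\psi|_{H^{s+1}}$. For the second estimate one writes $\frac1{\sqrt\mu}\mathcal G^\mu\psi=\sqrt\mu\cdot\frac1\mu\mathcal G^\mu\psi$ and keeps the $\mathfrak{P}$-weighted bounds throughout instead of collapsing $\mathfrak{P}$ to a plain derivative, so that high frequencies of $\psi$ enter through $\mathfrak{P}$ (with its $\mu$-gain) rather than through $|D|$.

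The main obstacle is exactly this: obtaining the sharp $\mu$-dependence. The crude route --- estimating $\nabla\cdot(h\overline V)$ by losing a full derivative via the product estimate and then bounding $\mathfrak{P}$ by $|D|$ --- already gives the first estimate but wipes out the high-frequency improvement; recovering a gain forces one to retain the anisotropic, frequency-dependent norms built from $\mathfrak{P}$, to split low and high frequencies, and (for the variable-coefficient problem) to combine this splitting with the commutator estimates \ref{Commutator estimates} on $P(\Sigma_t)$. This sharp analysis, and the precise tracking of all $\mu$-powers, is carried out in detail in \cite{WWP}, which we follow for the full proof.
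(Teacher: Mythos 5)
The paper's own ``proof'' of proposition \ref{Theorem 3.15} is a bare citation of theorem 3.15 in \cite{WWP}, so deferring the sharp variable-coefficient analysis to that reference is consistent with what the paper does. Your outline of the \emph{first} inequality is the right mechanism (reduce to $\frac1\mu\mathcal G^{\mu}\psi=-\nabla\cdot(h\overline V)$, control $h\overline V$ in $H^{s+1}$ by $\frac{1}{\sqrt\mu}\|\Lambda^{s+1}\nabla^{\mu}\phi\|_2$, and extract the $\sqrt\mu$ gain from the explicit extension $\mathrm F_0\psi$), and your identity $\|\Lambda^s\nabla^{\mu}\mathrm F_0\psi\|_2^2=\int\langle\xi\rangle^{2s}\sqrt\mu\,|\xi|\tanh(\sqrt\mu|\xi|)\,|\widehat\psi|^2\,d\xi$ is correct.

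Your treatment of the \emph{second} inequality does not close. First, the reduction is off by one derivative: $\frac{1}{\sqrt\mu}\mathcal G^{\mu}\psi=-\sqrt\mu\,\nabla\cdot(h\overline V)$, so controlling it in $H^s$ requires $h\overline V\in H^{s+1}$, not $H^s$; that forces $\Lambda^{s+1}\nabla^{\mu}\phi$, hence $|\mathfrak P\psi|_{H^{s+1}}$, and $\sqrt\mu\,|\mathfrak P\psi|_{H^{s+1}}$ is \emph{not} bounded by $\mu^{1/4}|\nabla\psi|_{H^s}$: the symbol ratio $\mu^{1/4}\langle\xi\rangle(1+\sqrt\mu|\xi|)^{-1/2}$ is of size $\mu^{-1/4}$ at $|\xi|\sim\mu^{-1/2}$, because the $\mathfrak P$-weight gains only half a derivative at high frequency. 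The derivative-sharp bound in \cite{WWP} is obtained by a duality argument (pairing $\mathcal G^{\mu}\psi$ against a test function extended into the strip), not by the divergence-form reduction, and it yields $|\mathcal G^{\mu}\psi|_{H^s}\leq\mu^{3/4}M_0|\mathfrak P\psi|_{H^{s+1/2}}$. Second, and more seriously, your own flat-bottom identity shows that the inequality as stated cannot hold: for $\zeta=0$ one has $\frac{1}{\sqrt\mu}\mathcal G^{\mu}\psi=|D|\tanh(\sqrt\mu|D|)\psi$, and taking $\widehat\psi$ supported in $\{|\xi|\geq\mu^{-1/2}\}$ gives $|\frac{1}{\sqrt\mu}\mathcal G^{\mu}\psi|_{H^s}\geq\tanh(1)\,|\nabla\psi|_{H^s}$ with no gain in $\mu$, which contradicts the prefactor $\mu^{1/4}$ once $\mu$ is small (here $M(s+1)$ is a fixed constant). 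The $\mu^{1/4}$ gain is only available when the right-hand side is measured in the weighted norm $|\mathfrak P\psi|_{H^{s+1/2}}$, as in \cite{WWP}; with $|\nabla\psi|_{H^s}$ on the right the correct constant is $O(1)$, which still requires the duality argument to avoid losing a derivative, and which is all that is actually needed in lemma \ref{Estimates on partial derivatives of zeta and psi}. You should either prove that weaker $O(1)$ bound or restate the second inequality in terms of $\mathfrak P$.
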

\begin{proof}
    This is a direct consequence of theorem 3.15 in \cite{WWP}.
\end{proof}

\begin{mylem}\label{lemme 2.38}
    Let $t_0 > d/2$, $s \geq 0$. Let $P(\Sigma_t)$ be as in definition \ref{Definitions Fondamentales}. Let $g(X,z)$ be a function on $\mathcal{S}\colonequals \mathbb{R}^d\times (-1,0)$ sufficiently regular such that its trace at $z=-1$ makes sense. Let $u$ solve the boundary value problem
    \begin{align*}
    \begin{cases}
        \nabla^{\mu}\cdot P(\Sigma_t) \nabla^{\mu} u = -\nabla^{\mu}\cdot g, \\
        u|_{z=0}=0, \ \ v_2|_{z=-1}=-e_z\cdot g|_{z=-1}.
    \end{cases}
    \end{align*}
    Then one has
    \begin{align*}
        ||\Lambda^s\nabla^{\mu}u||_2 \leq M(s+1)||\Lambda^s g||_2. 
    \end{align*}
\end{mylem}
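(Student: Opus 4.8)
The statement is the divergence-form (and inhomogeneous-boundary) counterpart of Lemma \ref{lemma 3.43}, and is in fact Lemma 2.38 of \cite{WWP}; I would organize the argument as follows. The plan is first to recast the boundary value problem in variational form: $u$ is characterized by $u|_{z=0}=0$ together with
\begin{align*}
    \int_{\mathcal{S}} P(\Sigma_t)\nabla^{\mu}u\cdot\nabla^{\mu}\varphi \,dXdz = -\int_{\mathcal{S}} g\cdot\nabla^{\mu}\varphi\,dXdz
\end{align*}
for every sufficiently regular test function $\varphi$ on $\mathcal{S}$ vanishing at $z=0$ (but not necessarily at $z=-1$). Integrating by parts, this weak formulation is equivalent to the interior equation $\nabla^{\mu}\cdot P(\Sigma_t)\nabla^{\mu}u=-\nabla^{\mu}\cdot g$, to $u|_{z=0}=0$, and to the natural (conormal) condition at the bottom, which is what is written compactly as $e_z\cdot P(\Sigma_t)\nabla^{\mu}u|_{z=-1}=-e_z\cdot g|_{z=-1}$ once one recalls that $P(\Sigma_t)|_{z=-1}$ is diagonal. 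Existence and uniqueness of $u$ follow from the Lax--Milgram theorem: by the non-cavitation hypothesis \eqref{FundamentalHypothesis} the matrix $P(\Sigma_t)$ is coercive, $\int_{\mathcal{S}}P(\Sigma_t)V\cdot V\geq \frac{1}{M_0}||V||_2^2$ (see Chapters 2--3 of \cite{WWP}), and a Poincaré inequality in $z$ (using $u|_{z=0}=0$) bounds $||u||_2$ by $||\partial_z u||_2\leq ||\nabla^{\mu}u||_2$.

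Next I would prove the estimate by induction on $s$. For $s=0$, take $\varphi=u$ in the weak formulation; coercivity and the Cauchy--Schwarz inequality give $\frac{1}{M_0}||\nabla^{\mu}u||_2^2\leq ||g||_2\,||\nabla^{\mu}u||_2$, hence $||\nabla^{\mu}u||_2\leq M_0||g||_2\leq M(1)||g||_2$. For the inductive step, assume the bound at order $s-1$ and apply $\Lambda^s$ to the equation. Since $\Lambda^s$ acts only in the horizontal variable it commutes with $\nabla^{\mu}$, so $\Lambda^s u$ solves a problem of exactly the same form, with $g$ replaced by $G_s\colonequals \Lambda^s g+[\Lambda^s,P(\Sigma_t)]\nabla^{\mu}u$, with $\Lambda^s u|_{z=0}=0$ and the analogous conormal condition at $z=-1$. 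Applying the $s=0$ estimate to $\Lambda^s u$ then yields
\begin{align*}
    ||\Lambda^s\nabla^{\mu}u||_2\leq M_0\big(||\Lambda^s g||_2+||[\Lambda^s,P(\Sigma_t)]\nabla^{\mu}u||_2\big).
\end{align*}

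The remaining work is to estimate the commutator. The entries of $P(\Sigma_t)-I_{d+1}$ are built from $\zeta$, $\nabla\zeta$ and smooth (quotient-type) functions of $h=1+\epsilon\zeta$, with polynomial dependence on $z\in(-1,0)$; hence, uniformly in $z$, the commutator and product estimates (Propositions \ref{product estimate}, \ref{Commutator estimates}) together with the quotient estimate \ref{Quotient estimate} give $||[\Lambda^s,P(\Sigma_t)]\nabla^{\mu}u||_2\leq M(s+1)||\Lambda^{s-1}\nabla^{\mu}u||_2$, the single loss of regularity on $\zeta$ — through the $\nabla\zeta$ entries, requiring $\zeta\in H^{s+1}$ — being precisely the origin of the constant $M(s+1)$. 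By the induction hypothesis $||\Lambda^{s-1}\nabla^{\mu}u||_2\leq M(s)||\Lambda^{s-1}g||_2\leq M(s+1)||\Lambda^s g||_2$, and combining the last three displays closes the induction: $||\Lambda^s\nabla^{\mu}u||_2\leq M(s+1)||\Lambda^s g||_2$ (the base case being the cases $0\leq s<1$). I expect the only genuinely delicate point to be the bookkeeping of the boundary terms in the first step: it is the combination of the divergence structure of the source with the conormal condition at $z=-1$ that makes the variational problem self-contained and produces the right-hand side $||\Lambda^s g||_2$ with no loss of derivative on $g$ (a Dirichlet condition at $z=-1$ would instead cost a half-derivative of $g$). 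Everything else — the commutator bound and the induction — is routine, exactly as in the proof of Lemma \ref{lemma 3.43}.
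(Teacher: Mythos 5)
Your proof is correct and is essentially the argument behind the cited result: the paper gives no proof of this lemma, simply referring to Lemma 2.38 of \cite{WWP}, whose proof is precisely the variational formulation, coercivity of $P(\Sigma_t)$ under \eqref{FundamentalHypothesis}, the $s=0$ energy estimate, and the $\Lambda^s$-conjugation/commutator induction you describe. You also correctly read the misprinted bottom condition ($v_2|_{z=-1}=-e_z\cdot g|_{z=-1}$) as the conormal condition $e_z\cdot P(\Sigma_t)\nabla^{\mu}u|_{z=-1}=-e_z\cdot g|_{z=-1}$, which is exactly what makes the weak formulation close with no boundary contribution and no loss of derivative on $g$.
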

\begin{proof}
    See lemma 2.38 in \cite{WWP}.
\end{proof}

\bibliographystyle{plain}
\bibliography{Biblio.bib}

\def\cprime{$'$}
\begin{thebibliography}{10}

\bibitem{BattjesBeji}
J.~A. Battjes and Serdar Beji.
\newblock Experimental investigation of wave propagation over a bar.
\newblock {\em Coastal Engineering}, 19(1-2):151--162, 1993.

\bibitem{Carter18}
John~D. Carter.
\newblock Bidirectional equations as models of waves on shallow water.
\newblock {\em Wave Motion}, 82:51--61, 2018.

\bibitem{CraigGroves94}
Walter Craig and Mark~D. Groves.
\newblock Hamiltonian long-wave approximations to the water-wave problem.
\newblock {\em Wave Motion}, 19:367--389, 1994.

\bibitem{Dingemans94}
M.~W. Dingemans.
\newblock Comparison of computations with {B}oussinesq-like models and
  laboratory measurements.
\newblock Technical report, Deltares, Delft, 1994.

\bibitem{Dinvay19}
Evgueni Dinvay.
\newblock On well-posedness of a dispersive system of the
  {W}hitham–{B}oussinesq type.
\newblock {\em Applied Mathematics Letters}, 88:13--20, 2019.

\bibitem{Dinvay20}
Evgueni Dinvay.
\newblock Well-posedness for a {W}hitham-{B}oussinesq system with surface
  tension.
\newblock arXiv:1908.00055v2, 2020.

\bibitem{DinvayDutykhKalisch19}
Evgueni Dinvay, Denys Dutykh, and Henrik Kalisch.
\newblock A comparative study of bi-directional systems.
\newblock {\em Appl. Numer. Math.}, 141:248--262, 2019.

\bibitem{DinvayNilsson19}
Evgueni Dinvay and Dag Nilsson.
\newblock Solitary wave solutions of a {W}hitham-{B}oussinesq system.
\newblock arXiv:1903.11292v2, 2019.

\bibitem{DinvaySelbergTesfahun19}
Evgueni Dinvay, Sigmund Selberg, and Achenef Tesfahun.
\newblock Well-posedness for a dispersive system of the {W}hitham-{B}oussinesq
  type.
\newblock arXiv:1902.09438v3, 2019.

\bibitem{DucheneIsrawiTalhouk16}
V.~Duch\^ene, S.~Israwi, and R.~Talhouk.
\newblock A new class of two-layer {G}reen-{N}aghdi systems with improved
  frequency dispersion.
\newblock {\em Stud. Appl. Math.}, 137(3):356--415, 2016.

\bibitem{EhrnstromWahlen19}
Mats Ehrnstr\"{o}m and Erik Wahl\'{e}n.
\newblock On {W}hitham's conjecture of a highest cusped wave for a nonlocal
  dispersive equation.
\newblock {\em Ann. Inst. H. Poincar\'{e} Anal. Non Lin\'{e}aire},
  36(6):1603--1637, 2019.

\bibitem{Hur17}
Vera~Mikyoung Hur.
\newblock Wave breaking in the {W}hitham equation.
\newblock {\em Adv. Math.}, 317:410--437, 2017.

\bibitem{HurPandey16}
Vera~Mikyoung Hur and Ashish Pandey.
\newblock Modulational instability in a full-dispersion shallow water model.
\newblock {\em Studies in Applied Mathematics}, 08 2016.

\bibitem{KalischPilod18}
Henrik Kalisch and Didier Pilod.
\newblock On the local well-posedness for a full dispersion {B}oussinesq system
  with surface tension.
\newblock {\em Proceedings of the American Mathematical Society}, 147:1, 09
  2018.

\bibitem{KleinLinaresPilodEtAl18}
Christian Klein, Felipe Linares, Didier Pilod, and Jean-Claude Saut.
\newblock On {W}hitham and related equations.
\newblock {\em Stud. Appl. Math.}, 140(2):133--177, 2018.

\bibitem{WWP}
Lannes.
\newblock {\em The water waves problem: mathematical analysis and asymptotics}.
\newblock Mathematical surveys and monographs; volume 188. American
  Mathematical Society, Rhode Island, United-States, 2013.

\bibitem{LannesBonneton09}
David Lannes and Philippe Bonneton.
\newblock Derivation of asymptotic two-dimensional time-dependent equations for
  surface water wave propagation.
\newblock {\em Physics of Fluids}, 21(1):016601, 2009.

\bibitem{NilssonWang19}
Dag Nilsson and Yuexun Wang.
\newblock Solitary wave solutions to a class of {W}hitham-{B}oussinesq systems.
\newblock {\em Zeitschrift für angewandte Mathematik und Physik}, 70(70),
  2019.

\bibitem{Pandey19}
Ashish Pandey.
\newblock The effects of surface tension on modulational instability in
  full-dispersion water-wave models.
\newblock {\em European Journal of Mechanics - B/Fluids}, 77:177--182, 2019.

\bibitem{VargasMaganaPanayotaros16}
R.M. Vargas-Magaña and P.~Panayotaros.
\newblock A {W}hitham–{B}oussinesq long-wave model for variable topography.
\newblock {\em Wave Motion}, 65:156--174, 2016.

\bibitem{Whitham67}
G.~B. Whitham.
\newblock Variational methods and applications to water waves.
\newblock {\em Royal Society, London}, 299:6--25, 1967.

\bibitem{Whitham74}
G.~B. Whitham.
\newblock {\em Linear and Nonlinear Waves}.
\newblock Wiley, New York, 1974.

\end{thebibliography}

\end{document}